\newtheorem{thm}{Theorem}[section]
\newtheorem{prop}[thm]{Proposition}
\newtheorem{lem}[thm]{Lemma}
\newtheorem{rem}[thm]{Remark}
\journalname{JOTA}
\begin{document}

\title{Dividend and Capital Injection Optimization with Transaction Cost for Spectrally Negative L\'{e}vy Risk Processes}

\author{Wenyuan Wang \and Yuebao Wang \and  Xueyuan Wu }

\institute{Wenyuan Wang \at
             School of Mathematical Sciences, Xiamen University\\
						 Fujian 361005, China\\
              wwywang@xmu.edu.cn
           \and
           Yuebao Wang \at
					School of Mathematics, Soochow University\\
					Suzhou, 215006, China\\
					ybwang@suda.edu.cn
					\and Xueyuan Wu,  Corresponding Author  \at
              The University of Melbourne \\
              VIC 3010, Australia\\
              xueyuanw@unimelb.edu.au
}


\maketitle

\begin{abstract}
For an insurance company with reserve modelled by a spectrally negative L\'{e}vy process, we study the optimal \emph{impulse dividend and capital injection} (IDCI) strategy for maximizing the expected accumulated discounted net dividend payment subtracted by the accumulated discounted cost of injecting capital. In this setting, the beneficiary of the dividends injects capital to ensure a non-negative risk process so that the insurer never goes bankrupt. The optimal IDCI strategy together with its value function are obtained. Finally, a Brownian motion example is presented to illustrate the optimal IDCI strategy numerically.
\end{abstract}
\keywords{Spectrally negative L\'{e}vy process \and De Finetti's optimal dividend problem \and Stochastic control \and Hamilton-Jacobi-Bellman inequality}
\subclass{49K45 \and 49N25}


\section{Introduction}\label{sec:1}

This paper aims to discuss the optimal \emph{impulse dividend and capital injection} (IDCI) strategy for an insurance company. The surplus level is modelled by a \emph{spectrally negative L\'{e}vy} (SNL) risk process, a widely used model in the literature. To enhance the practical relevance of the optimal dividend problem considered in this paper, we consider two real-life factors: the transaction costs on dividends and the capital injections. First, having the transaction costs included in the total cost of dividends is a natural addition; and second, allowing capital injections can protect the insurance company against the bankruptcy, thereby sustaining dividend payments in the long run. Through maximizing the expected accumulated discounted net dividend payments subtracted by the accumulated discounted cost of injecting capital under the proposed surplus process, we obtain the optimal IDCI strategy, which provides a useful reference for insurance companies when designing their long-term profit-sharing strategies.

The optimal dividend payout strategies have remained an active research field in the actuarial science literature for almost 60 years. Two survey papers, \cite{Albrecher09} and \cite{Avanzi09}, provide thorough and insightful reviews on the classical contributions and recent progress in the field. The earliest paper in the field, \cite{Finetti57}, proved that with the option to pay out dividends from its surplus to the beneficiary until the discrete time of ruin, an insurance company should adopt a barrier dividend strategy to maximize the expected total amount of discounted dividends until ruin. However, when dividends are imposed with fixed transaction costs, recent research findings in the literature suggest that the dividend optimization problem becomes an impulse (dividend) control problem and the optimal dividend strategy is an optimal impulse dividend (OID) strategy.

Research on OID strategies has attracted much attention for a decade and has progressed well under various surplus processes. In the classical \emph{Cram\'{e}r-Lundberg} (CL) risk model, \cite{Bai10a} studied an OID problem with transaction cost and tax on dividends as well as exponentially distributed claims. The obtained OID strategy \emph{reduces the reserve to level $u_{1}\in[0,u_{2})$ whenever it is above or equal to level $u_{2}$}, also called a $(u_{1},u_{2})$ strategy.
In the dual classical CL risk model, \cite{Zhou14} also considered an  OID problem with  fixed/proportional transaction cost on dividends and derived the OID strategy via a quasi-variational inequality argument. \cite{Bai10b} studied the OID problem with transaction costs on dividends for a  class of general diffusion risk processes and derived the $(u_{1},u_{2})$ OID strategy. In the context of SNL risk process, \cite{Loeffen09b} discussed an OID problem with transaction cost and showed that a $(u_{1},u_{2})$ strategy maximizes the expected accumulated present value of the net dividends. For the \emph{spectrally positive L\'{e}vy} (SPL) risk process with fixed transaction costs on dividends, \cite{Bayraktar14} proved that a $(u_{1},u_{2})$ strategy is again the OID. For more results on impulse dividend control problems, we refer readers to \cite{Hernandez18}-\cite{Yao11} and the references therein.

In the literature, capital injection is another factor to consider when designing dividend payout. Under risk models with dividends as well as fixed transaction costs imposed on the capital injections, the corresponding optimization problem is also an impulse (capital injection) control problem. In the setting of the dual classical CL risk model, \cite{Yao11} found that the \emph{optimal dividend and capital injection} (ODCI) strategy, which maximizes the expected present value of the dividends subtracted by the discounted cost of capital injections, pays out dividends according to a barrier strategy and injects capitals to bring the reserve up to a critical level whenever it falls below $0$.
Under the drifted diffusion risk model, \cite{Peng12} investigated the optimal dividend problem of an insurance company which controls risk exposure by reinsurance and by issuing new equity to protect the insurance company from bankruptcy. The corresponding ODCI strategy also pays dividends by a barrier strategy and injects capital to bring reserve up to a critical level whenever it falls below $0$. In the setting of SPL risk process with the dividend rate restricted,
\cite{Zhao17a} and \cite{Zhao17b} considered an ODCI problem and found that the optimal method of paying dividends is a threshold strategy.
For more information on dividend optimization in risk models with capital injection being imposed with proportional or fixed transaction cost, we refer readers to \cite{Zhao15}, \cite{Zhu17}-\cite{Bayraktar13}, and the references therein.

Regarding SNL risk processes, the majority of dividend optimization problems are formulated as non-impulse stochastic control problems. Using the expected present value of dividends until ruin (the expected present value of the dividends subtracted by the discounted costs of capital injections) as the value function, \cite{Avram07} identified the condition under which the barrier strategy (respectively, the barrier dividend strategy together with capital injection strategy that reflects the reserve process at $0$) is optimal among all admissible strategies. More results of non-impulse dividend optimization under the SNL risk processes can be found in \cite{Avram15}, \cite{Loeffen08}-\cite{Gerber69}, and the references therein. The non-impulse dividends optimization under the SPL risk processes can be found in \cite{Bayraktar14}, \cite{Avanzi11}, \cite{Zhao17a}, \cite{Zhao17b}, \cite{Avanzi17}-\cite{Avram07}, \cite{Kyprianou12}, \cite{Yin13}, and others.

Motivated by \cite{Loeffen09b} and \cite{Avram07}, this paper studies a general optimal IDCI problem through
maximizing the expected accumulated discounted net dividend payment subtracted by the accumulated discounted cost of injecting capital in the setup of the SNL risk process.
The novelty in this paper lies as follows: (i) compared with the existing OID results under diffusion or general L\'{e}vy setup, the present model  brings in the capital injection in an optimal way to reflect the corresponding risk process at $0$; and (ii) compared with the existing OID results concerning capital injections, the present model studies the L\'{e}vy setup, a more general driven process. In this paper, the discussion follows the standard treatment of \emph{Hamilton-Jacobi-Bellman} (HJB) inequality in the control theory. We first find the optimal strategy among all $(z_{1},z_{2})$ IDCI strategies, and then we prove that it is optimal among all IDCI strategies via a verification argument. To facilitate the standard HJB framework, we employ subtle approaches within each step, for example, the novel technique to derive Proposition \ref{Pro.2} and Lemma \ref{4.5}, and the mollifying argument to prove the modified verification lemma (see, Lemma \ref{Lem.1a} and Lemma \ref{Lem.1}).

We acknowledge that there is a parallel paper in the literature, \cite{Junca18}, which was also finished independently around the same time. The first version of both papers were available on internet in the middle of 2018. The authors of \cite{Junca18} considered the bail-out optimal dividend problem under fixed transaction costs for a L\'{e}vy risk model with a constraint on the expected net present value of injected capital. While the main results in this paper and those in \cite{Junca18} appear to be very similar, the primary objectives of these two papers are notably different as well as the methods adopted in the proof of certain main results (for instance, the verification Lemma \ref{Lem.1a} and Lemma \ref{Lem.1} in this paper vs Theorem 4.10 in \cite{Junca18}). We believe both papers make interesting contributions to the literature.

The remainder of this paper is organized as follows:
Section \ref{sec:2} comprises preliminaries concerning the SNL process and the mathematical setup of the dividend optimization problem. In Section \ref{sec:3} we represent the value function of a $(z_{1},z_{2})$ IDCI strategy using the scale function associated with the SNL process. This facilitates the characterization of the optimal strategy among all $(z_{1},z_{2})$ IDCI strategies, which is further proved to be optimal among all admissible IDCI strategies. In Section \ref{sec:4}, we first prove that a solution to the HJB inequalities coincides with the optimal value function via a verification lemma. Next, the solution to the HJB inequality is constructed, and the optimal strategy is found to be a $(z_{1},z_{2})$ IDCI strategy under which the risk process is reflected at $0$. In Section \ref{sec:5}, we also illustrate the optimal IDCI strategy by using one numerical example.

\section{Formulation of the dividend optimization problem}\label{sec:2}

Let  $X=\{X(t);t\geq0\}$ with probability  law $\{\mathrm{P}_{x};x\in\left[0,\infty\right)\}$ and natural filtration $\mathcal{F}=\{\mathcal{F}_{t};t\geq0\}$ be a SNL process, which is not a pure increasing linear drift or the negative of a sub-ordinator. Denote the running supremum  $\overline{X}(t):=\sup\{X(s);s\in [0,t]\}$ for $t\geq0$. Assume that in the case of no control (dividend is not deducted and capital is not injected), the risk process evolves as $X(t)$ for $t\geq0$.
An \emph{impulse dividend strategy}, denoted by $D=\{D(t);t\geq 0\}$, is a one-dimensional, non-decreasing,
left-continuous, $\mathcal{F}$-adapted, and pure jump process started at $0$, i.e., $D(0)=0$ and $D(t)$ defines the cumulative dividend that the company has paid out until time $t\ge 0$.
For the insurance company not to go bankrupt, the beneficiary of the dividend is required to inject capital into the insurance company to ensure that the  risk process is non-negative.
A \emph{capital injection strategy},  denoted by $R=\{R(t);t\geq 0\}$, is a one-dimensional, non-decreasing,
c\`{a}dl\`{a}g, $\mathcal{F}$-adapted process started at $0$, i.e., $R(0)=0$ and $R(t)$ defines
the cumulative capital that the beneficiary has injected until time $t\ge 0$. The combined pair $(D,R)$ is called an IDCI strategy. More explicitly, an impulse dividend strategy $D$ is characterized by
$$\left(\tau_n^{D},\eta_{n}^{D}\right),\quad n=1,2,\cdots,$$
where $\tau_{n}^{D}$ and $\eta_{n}^{D}$ are the $n$-th time and amount of dividend lump sum payment, respectively.
With dividends deducted according to $D$ and capital injected according to $R$, the controlled aggregate reserve process is then given by
\begin{eqnarray}\label{U}
U(t)=X(t)-D(t)+R(t), \quad t\geq 0.\nonumber
\end{eqnarray}
An IDCI strategy $(D,R)$ is defined to be \emph{admissible} if $U(t)\geq0$ for all $t\geq0$ and
$\int_{0}^{\infty}\mathrm{e}^{-qt}\mathrm{d}R(t)<\infty$ almost sure in the sense of $\mathrm{P}_{x}$,
where $q>0$ is a discount factor.

Let $\mathcal{D}$ be the set of all admissible
dividend and capital injection strategies.
For an IDCI strategy $(D,R)\in\mathcal{D}$, denote its value function as
$$
V_{(D,R)}(x)=\mathrm{E}_{x}\left(\sum_{n=1}^{\infty} \mathrm{e}^{-q \tau_{n}^{D}}\left(\eta_{n}^{D}-c\right)-\phi\int_{0}^{\infty}\mathrm{e}^{-qt}\mathrm{d}R(t)\right),\quad x\in[0,\infty),
$$
where  $c>0$ is the transaction cost for each lump sum dividend payment and $\phi>1$ is the cost of per unit capital injected. The goal is to identify the optimal strategy $(D^*,R^*)$ and the corresponding optimal value function
\begin{equation}\label{optimal function}
V(x)=V_{(D^*,R^*)}(x)=\sup\limits_{(D,R)\in\mathcal{D}}V_{(D,R)}(x),\quad x\in[0,\infty).\nonumber
\end{equation}
Intuitively speaking, because of $\phi>0$ and $q>0$, it would be better if the capital is injected as late as possible with no further capital injection being made rather than just injecting enough amounts to keep the corresponding risk process non-negative.

The Laplace exponent of $X$ is
\begin{eqnarray}
\psi(\theta)&=&\ln \mathrm{E}_{0}\left[\mathrm{e}^{\theta X(1)}\right]\nonumber
=\gamma\theta+\frac{1}{2}\sigma^{2}\theta^{2}-\int_{(0,\infty)}(1-\mathrm{e}^{-\theta x}-\theta x\mathbf{1}_{(0,1)}(x))\upsilon(\mathrm{d}x),\nonumber
\end{eqnarray}
where $\upsilon$ is  the L\'{e}vy measure with $\int_{(0,\infty)}(1\wedge x^{2})\upsilon(\mathrm{d}x)<\infty$.
Alternatively,
$$
X(t)=\gamma t+\sigma B(t)-\int_{0}^{t}\int_{(0,1)}x \overline{N}(\mathrm{d}s,\mathrm{d}x)-\int_{0}^{t}\int_{[1,\infty)}x N(\mathrm{d}s,\mathrm{d}x), \quad t\geq 0,
$$
where $B(t)$ is the standard Brownian motion, $N(\mathrm{d}s,\mathrm{d}x)$ is an independent Poisson random measure on $[0, \infty)\times (0, \infty)$ with intensity measure $\mathrm{d}s\upsilon(\mathrm{d}x)$, and $\overline{N}(\mathrm{d}s,\mathrm{d}x)=N(\mathrm{d}s,\mathrm{d}x)-\mathrm{d}s\upsilon(\mathrm{d}x)$ denotes the compensated random measure.

It is known that $\psi(\theta)<\infty$ for  $\theta\in[0,\infty)$, in which case it is strictly convex and infinitely differentiable.
As in \cite{Bertoin96}, the $q$-scale function of $X$,  for each $q\geq0$, $W^{(q)}:[0,\infty)\mapsto[0,\infty)$ is the unique strictly increasing and continuous function with Laplace transform
$$
\int_{0}^{\infty}\mathrm{e}^{-\theta x}W^{(q)}(x)\mathrm{d}x=\frac{1}{\psi(\theta)-q},\quad  \theta>\Phi_{q},
$$
where $\Phi_{q}$ is the largest solution of the equation $\psi(\theta)=q$. Further, let $W^{(q)}(x)=0 $ for $x<0$ and write $W$ for the $0$-scale function $W^{(0)}$.
For any $x\in\mathbb{R}$ and $\vartheta\geq0$, there exists the well-known exponential change of measure for an
SNL process
$$
\left.\frac{\mathrm{P}_{x}^{\vartheta}}{\mathrm{P}_{x}}\right|_{\mathcal{F}_{t}}=\mathrm{e}^{\vartheta\left(X(t)-x\right)-\psi(\vartheta)t}.
$$

Furthermore, under the probability measure $\mathrm{P}_{x}^{\vartheta}$,  $X$ remains an SNL
process and we denote by $W_{\vartheta}^{(q)}$ and $W_{\vartheta}$ respectively the $q$-scale function and the $0$-scale function for $X$ under $\mathrm{P}_{x}^{\vartheta}$.

Note that we do not impose the safety loading condition $\psi^{\prime}(0+)\geq 0$. Instead, $\psi^{\prime}(0+)>-\infty$ is assumed throughout the paper.

\section{The $(z_{1},z_{2})$ type dividend and capital injection strategy}\label{sec:3}

For the L\'{e}vy process $X$, denote the reflected process at infimum (or at $0$)
$$
Y(t)=X(t)-\inf_{0\leq s\leq t}\left(X(s)\wedge 0\right),\quad t\geq0.
$$
Define $T_{a}^{+}=\inf\{t\geq0;Y(t)> a\}$ and $\tau_{a}^{+}=\inf\{t\geq0;U(t)> a\}$, respectively, to be the up-crossing times of level $a\geq x$ of the processes $Y$ and $U$, with the convention $\inf\emptyset=\infty$. Define further
$$
\overline{W}^{(q)}(x)=\int_{0}^{x}W^{(q)}(z)\mathrm{d}z,\,\,
Z^{(q)}(x)=1+q \,\overline{W}^{(q)}(x),\,\,\overline{Z}^{(q)}(x)=\int_{0}^{x}Z^{(q)}(z)\mathrm{d}z.
$$
Then, for $x\in[0,b]$ and $q\geq0$, Proposition 2 of \cite{Pistorius04} holds that
\begin{equation}\label{two.side.exit.}
\mathrm{E}_x\left(\mathrm{e}^{-qT_{b}^{+}}\right)=Z^{(q)}(x)/Z^{(q)}(b).
\end{equation}

For $z_{1}<z_{2}$, let us consider an important type of IDCI strategy, such as the $(z_{1},z_{2})$ strategy $\{(D_{z_{1}}^{z_{2}}(t),R_{z_{1}}^{z_{2}}(t));t\geq0\}$:  a lump sum of dividend payment is made to bring the reserve level down to the level $z_{1}$ once the reserve hits or is above the level $z_{2}$, while no dividend payment is made whenever the reserve level is below $z_{2}$. Capital is injected in such a way that the reserve process is reflected at $0$
, i.e., $R_{z_{1}}^{z_{2}}(t)=-\inf\limits_{0\leq s\leq t}\left(X(s)-D_{z_{1}}^{z_{2}}(s)\right)\wedge 0$. To be precise, we define recursively
$T_{0}^{+}=0,\,\,T_{1}^{+}=T_{z_{2}}^{+}$ and
\begin{eqnarray}\label{Tn}
\hspace{-0.3cm}T^{+}_{n+1}&=&
\inf\bigg\{t> T^{+}_{n};
X(t)-(x\vee z_{2}-z_{1})-(n-1)(z_{2}-z_{1})
\nonumber\\
&&\hspace{-0.5cm}
-\inf\limits_{ s\leq t}\bigg[X(s)-\sum_{k=1}^{n-1}
\left(x\vee z_{2}-z_{1}+(k-1)(z_{2}-z_{1})\right)
\mathbf{1}_{(T_{k}^{+},T_{k+1}^{+}]}(s)
\nonumber\\
&&\hspace{-0.5cm}
-\left(x\vee z_{2}-z_{1}+(n-1)(z_{2}-z_{1})\right)
\mathbf{1}_{(T_{n}^{+},\infty)}(s)
\bigg]\wedge 0>z_{2}\bigg\},\,\, n\geq 1.
\end{eqnarray}
Then, the $(z_{1},z_{2})$ strategy can be re-expressed as
\begin{eqnarray}\label{D.repre.}
D_{z_{1}}^{z_{2}}(t)&=&\sum_{n=1}^{\infty}\left(x\vee z_{2}-z_{1}+(n-1)(z_{2}-z_{1})\right)
\mathbf{1}_{(T_{n}^{+},T_{n+1}^{+}]}(t)
,\quad t\geq 0,
\end{eqnarray}
and
\begin{eqnarray}
R_{z_{1}}^{z_{2}}(t)
&=&-\inf\limits_{ s\leq t}\bigg(X(s)-\sum_{n=1}^{\infty}
\left(x\vee z_{2}-z_{1}+(n-1)(z_{2}-z_{1})\right)
\nonumber\\
&&\hspace{1cm}\times
\mathbf{1}_{(T_{n}^{+},T_{n+1}^{+}]}(s)
\bigg)\wedge 0
,\quad t\geq 0.\nonumber
\end{eqnarray}

In the following result, the value function of a $(z_{1},z_{2})$ strategy, denoted by $V_{z_{1}}^{z_{2}}$, is expressed in terms of the scale functions.

\begin{prop}\label{Pro.1} Given $q>0$ and $c>0$, we have
\begin{eqnarray}\label{value.of.z1z2.1}
\hspace{-0.3cm}V_{z_{1}}^{z_{2}}(x)
&=&Z^{(q)}(x)\left(\frac{z_{2}\!-\!z_{1}\!-\!c}{Z^{(q)}(z_{2})\!-\!Z^{(q)}(z_{1})}
\!-\!\phi\frac{\overline{Z}^{(q)}(z_{2})\!-\!\overline{Z}^{(q)}(z_{1})}{Z^{(q)}(z_{2})\!-\!Z^{(q)}(z_{1})}\right)\nonumber\\
&&+\phi\left(\overline{Z}^{(q)}(x)+\frac{\psi^{\prime}(0+)}{q}\right),
\quad x\in[0,z_{2}),\,z_{1}+c\leq z_{2}<\infty,
\end{eqnarray}
and
\begin{eqnarray}\label{value.of.z1z2.2}
V_{z_{1}}^{z_{2}}(x)
&=&x+\frac{Z^{(q)}(z_{2})\left(z_{2}-z_{1}-c\right)}
{Z^{(q)}(z_{2})-Z^{(q)}(z_{1})}
-\phi\frac{\overline{Z}^{(q)}(z_{2})Z^{(q)}(z_{1})-\overline{Z}^{(q)}(z_{1})Z^{(q)}(z_{2})}{Z^{(q)}(z_{2})-Z^{(q)}(z_{1})}
\nonumber\\&&
-z_{2}+\phi\frac{\psi^{\prime}(0+)}{q},\quad x\in[z_{2},\infty),\,z_{1}+c\leq z_{2}<\infty.
\end{eqnarray}
\end{prop}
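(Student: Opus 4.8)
The plan is to exploit the regenerative structure of the $(z_{1},z_{2})$ strategy. Because $X$ has no positive jumps, the controlled reserve creeps upward and hence reaches the dividend barrier \emph{exactly} at $z_{2}$; at that instant a lump sum $z_{2}-z_{1}$ is paid out (net value $z_{2}-z_{1}-c$), the reserve is reset to $z_{1}$, and by the strong Markov property the process restarts as a fresh copy started from $z_{1}$. Abbreviating $V:=V_{z_{1}}^{z_{2}}$, $T:=T_{z_{2}}^{+}$, $A(x):=\mathrm{E}_{x}(\mathrm{e}^{-qT})$ and $B(x):=\mathrm{E}_{x}\big(\int_{0}^{T}\mathrm{e}^{-qt}\,\mathrm{d}R_{z_{1}}^{z_{2}}(t)\big)$, conditioning on $\mathcal{F}_{T}$ gives the renewal identity
\begin{equation*}
V(x)=A(x)\big(z_{2}-z_{1}-c+V(z_{1})\big)-\phi\,B(x),\qquad x\in[0,z_{2}),
\end{equation*}
where $A(x)(z_{2}-z_{1}-c)$ is the discounted net first dividend, $A(x)V(z_{1})$ accounts for all later cycles, and $-\phi B(x)$ is the discounted injection cost before the first dividend.

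The two fluctuation quantities $A$ and $B$ are then supplied by the scale functions. Identity \eqref{two.side.exit.} gives immediately $A(x)=Z^{(q)}(x)/Z^{(q)}(z_{2})$. The factor $B(x)$ --- the expected discounted capital injected by the reflected process before it up-crosses $z_{2}$ --- is the genuinely new ingredient, and I would obtain it by a martingale argument. Put $u(x):=\overline{Z}^{(q)}(x)+\psi^{\prime}(0+)/q$, so that $u^{\prime}(0+)=Z^{(q)}(0)=1$ and $(\mathcal{L}-q)u\equiv0$ on $(0,\infty)$, $\mathcal{L}$ denoting the generator of $X$; the latter is the standard fact that $\mathrm{e}^{-qt}u(X(t))$ is a local martingale up to the first passage of $X$ below $0$. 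Applying the change-of-variables formula to the reflected reserve $U$, whose regulator $R_{z_{1}}^{z_{2}}$ increases only on $\{U=0\}$, contributes the boundary term $u^{\prime}(0+)\,\mathrm{d}R_{z_{1}}^{z_{2}}$, so that $\mathrm{e}^{-qt}u(U(t))-\int_{0}^{t}\mathrm{e}^{-qs}\,\mathrm{d}R_{z_{1}}^{z_{2}}(s)$ is a local martingale on $[0,T]$. Optional stopping at $T$, combined with $U(T)=z_{2}$ (again by the absence of positive jumps), yields
\begin{equation*}
B(x)=u(z_{2})A(x)-u(x)=\frac{Z^{(q)}(x)}{Z^{(q)}(z_{2})}\Big(\overline{Z}^{(q)}(z_{2})+\tfrac{\psi^{\prime}(0+)}{q}\Big)-\overline{Z}^{(q)}(x)-\tfrac{\psi^{\prime}(0+)}{q}.
\end{equation*}

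With $A$ and $B$ explicit, I would close the system by evaluating the renewal identity at $x=z_{1}$ and solving the resulting scalar linear equation for $V(z_{1})$. Substituting $V(z_{1})$ back, the term $-\phi B(x)$ contributes exactly the summand $\phi\big(\overline{Z}^{(q)}(x)+\psi^{\prime}(0+)/q\big)$ of \eqref{value.of.z1z2.1}, while after cancellation of all $\psi^{\prime}(0+)$ contributions the coefficient of $Z^{(q)}(x)$ collapses to $\big[(z_{2}-z_{1}-c)-\phi(\overline{Z}^{(q)}(z_{2})-\overline{Z}^{(q)}(z_{1}))\big]/\big(Z^{(q)}(z_{2})-Z^{(q)}(z_{1})\big)$, which is precisely \eqref{value.of.z1z2.1}. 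For $x\in[z_{2},\infty)$ the strategy pays $x-z_{1}$ at once and resets to $z_{1}$, so that $V(x)=x-z_{1}-c+V(z_{1})$; inserting the already-computed $V(z_{1})$ and simplifying produces \eqref{value.of.z1z2.2}.

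I expect the main obstacle to be the rigorous derivation of $B(x)$: one must justify the change-of-variables formula for the reflected spectrally negative process and isolate the reflection term correctly, verify both the harmonicity $(\mathcal{L}-q)u\equiv0$ and the boundary value $u^{\prime}(0+)=1$, and check the integrability needed for optional stopping at $T$ --- here the admissibility bound $\int_{0}^{\infty}\mathrm{e}^{-qt}\,\mathrm{d}R<\infty$ together with $0\le U\le z_{2}$ on $[0,T]$ and $q>0$ make a dominated-convergence passage routine. By contrast the renewal decomposition and the final algebraic simplification of the fixed point are straightforward.
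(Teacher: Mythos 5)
Your proposal is correct and takes essentially the same approach as the paper: both proofs rest on the renewal decomposition at the first up-crossing time of $z_{2}$ (using upward creeping and the strong Markov property), the identity \eqref{two.side.exit.} for $\mathrm{E}_x\left(\mathrm{e}^{-qT_{z_{2}}^{+}}\right)$, and the formula for the expected discounted injections before up-crossing, followed by solving the scalar fixed point at $z_{1}$ and the one-line reduction $V(x)=x-z_{1}-c+V(z_{1})$ for $x\geq z_{2}$. The only differences are cosmetic: the paper splits $V_{z_{1}}^{z_{2}}=f-\phi g$ and obtains your quantity $B(x)$ by citing the argument for (4.8) of \cite{Avram07}, whereas you fold dividends and injections into a single renewal identity and re-derive $B(x)$ via the martingale argument with $u=\overline{Z}^{(q)}+\psi^{\prime}(0+)/q$, which is sound since $(\mathcal{L}-q)\overline{Z}^{(q)}=\psi^{\prime}(0+)$ on $(0,\infty)$ and $u^{\prime}(0+)=Z^{(q)}(0)=1$.
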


\begin{proof}\quad
Denote by $f(x)$ the expected discounted total lump sum dividend payments, and we have
$$
f(x)=
x-z_{1}-c+f(z_{1}),\quad x\in [z_{2},\infty),
$$
and
$$
f(x)=
\mathrm{E}_x\left(\mathrm{e}^{-q\tau_{z_{2}}^{+}}\right)
f(z_{2})
=\frac{Z^{(q)}(x)}{Z^{(q)}(z_{2})}(z_{2}-z_{1}-c+f(z_{1})),\quad x\in[0, z_{2}),
$$
which yield
$f(z_{1})=\frac{Z^{(q)}(z_{1})}{Z^{(q)}(z_{2})}(z_{2}-z_{1}-c+f(z_{1}))$, i.e.
$$
f(z_{1})=\frac{Z^{(q)}(z_{1})\left(z_{2}-z_{1}-c\right)}{Z^{(q)}(z_{2})-Z^{(q)}(z_{1})}.
$$
Hence
\begin{equation}\label{fx}
f(x)=\left\{\begin{array}{ll}
\frac{Z^{(q)}(x)\left(z_{2}-z_{1}-c\right)}{Z^{(q)}(z_{2})-Z^{(q)}(z_{1})},\quad & x\in[0,z_{2}),\\
x-z_{2}+\frac{Z^{(q)}(z_{2})\left(z_{2}-z_{1}-c\right)}{Z^{(q)}(z_{2})-Z^{(q)}(z_{1})},\quad & x\in[z_{2},\infty).
\end{array}\right.
\end{equation}

Denote by $g(x)$ the expected discounted total capital injections. By a similar argument as that which derives (4.8) of \cite{Avram07}, one gets for $x\in[0,z_{2}]$
\begin{eqnarray*}
g(x)&=&
\mathrm{E}_x\left(\int_{0}^{\tau_{z_{2}}^{+}}\mathrm{e}^{-qt}\mathrm{d}R(t)\right)
+\mathrm{E}_x\left(\mathrm{e}^{-q\tau_{z_{2}}^{+}}\right)g(z_{2})\\
&=&\mathrm{E}_x\left(\int_{0}^{\tau_{z_{2}}^{+}}\mathrm{e}^{-qt}\mathrm{d}R(t)\right)+\frac{Z^{(q)}(x)}{Z^{(q)}(z_{2})}g(z_{1})\\
&=&-\overline{Z}^{(q)}(x)-\frac{\psi^{\prime}(0+)}{q}+\left(\overline{Z}^{(q)}(z_{2})+\frac{\psi^{\prime}(0+)}{q}\right)
\frac{Z^{(q)}(x)}{Z^{(q)}(z_{2})}+\frac{Z^{(q)}(x)}{Z^{(q)}(z_{2})}g(z_{1}),
\end{eqnarray*}
which gives
\begin{equation}\label{gz1}
g(z_{1})=\frac{-\left(\overline{Z}^{(q)}(z_{1})+\frac{\psi^{\prime}(0+)}{q}\right)Z^{(q)}(z_{2})
+\left(\overline{Z}^{(q)}(z_{2})+\frac{\psi^{\prime}(0+)}{q}\right)Z^{(q)}(z_{1})}{Z^{(q)}(z_{2})-Z^{(q)}(z_{1})},\nonumber
\end{equation}
and thus
\begin{eqnarray}\label{gx< z_{2}}
g(x)
&=&\frac{Z^{(q)}(x)}{Z^{(q)}(z_{2})}\left(\overline{Z}^{(q)}(z_{2})\!-\!
\frac{\overline{Z}^{(q)}(z_{1})Z^{(q)}(z_{2})\!-\!\overline{Z}^{(q)}(z_{2})Z^{(q)}(z_{1})}{Z^{(q)}(z_{2})\!-\!Z^{(q)}(z_{1})}\right)\nonumber\\
&&\!-\overline{Z}^{(q)}(x)\!-\!\frac{\psi^{\prime}(0+)}{q},\quad x\in[0,z_{2}].
\end{eqnarray}
For $x\in(z_{2},\infty)$, by \eqref{gz1} we have
\begin{equation}\label{gx}
g(x)=g(z_{1})=\frac{-\overline{Z}^{(q)}(z_{1})Z^{(q)}(z_{2})+\overline{Z}^{(q)}(z_{2})Z^{(q)}(z_{1})}{Z^{(q)}(z_{2})-Z^{(q)}(z_{1})}-\frac{\psi^{\prime}(0+)}{q}.
\end{equation}
Collecting $V_{z_{1}}^{z_{2}}(x)=f(x)-\phi g(x)$, \eqref{fx}, \eqref{gx< z_{2}}, and \eqref{gx} yields \eqref{value.of.z1z2.1} and \eqref{value.of.z1z2.2} immediately. This completes the proof.
\qed
\end{proof}

Define, for $0<c\leq z_{1}+c< z_{2}<\infty$,
\begin{equation}\label{auxiliary.func.}
\xi(z_{1},z_{2})
=\frac{z_{2}-z_{1}-c}{Z^{(q)}(z_{2})-Z^{(q)}(z_{1})}-\phi\frac{\overline{Z}^{(q)}(z_{2})-\overline{Z}^{(q)}(z_{1})}{Z^{(q)}(z_{2})-Z^{(q)}(z_{1})}.
\end{equation}
Then, $V_{z_{1}}^{z_{2}}(x)=Z^{(q)}(x)\xi(z_{1},z_{2})+\phi\left(\overline{Z}^{(q)}(x)+\frac{\psi^{\prime}(0+)}{q}\right),\,\,
x\in[0,z_{2}]$.
The set of maximizers of $\xi(z_{1},z_{2})$ is written as
\begin{equation}\label{Def. set of maximizers}
\mathcal{M}:=\big\{(z_{1},z_{2});\,
c\leq z_{1}+c\leq z_{2},
\inf\limits_{x\geq0,\,
x+c\leq y}\left(\xi(z_{1},z_{2})-\xi(x,y)\right)\geq0\big\}.
\end{equation}

Denote by
\begin{equation}\label{f.u.p.t.r.}
\hat{\tau}_{z_{2}}=\inf\{t\geq0; \sup_{0\leq s\leq t}(X(s)\vee0)-X(t)>z_{2}\}
\end{equation}
the first passage time of the L\'{e}vy process reflected at its supremum.
The following result gives a useful link between the second derivative of $\xi$ (in $z_{2}$) and the Laplace transform of $\hat{\tau}_{z_{2}}$.

\begin{lem}
Let $\xi$ and $\hat{\tau}_{z_{2}}$ be defined respectively by \eqref{auxiliary.func.} and \eqref{f.u.p.t.r.}. We have
\begin{eqnarray}\label{25}
&&
\frac{\partial}{\partial z_2}\left(\frac{[Z^{(q)}(z_{2})-Z^{(q)}(z_{1})]^{2}}{qW^{(q)}(z_{2})}
\frac{\partial}{\partial z_{2}}\xi(z_{1},z_{2})\right)
\nonumber\\
&=&\frac{\left(Z^{(q)}(z_{2})-Z^{(q)}(z_{1})\right)W^{(q)\prime}(z_{2})}{[W^{(q)}(z_{2})]^{2}}
\left(-\frac{1}{q}+\frac{\phi}{q}\mathrm{E}_{0}\left(\mathrm{e}^{-q\hat{\tau}_{z_{2}}}\right)\right).
\end{eqnarray}
\end{lem}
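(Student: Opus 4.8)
The plan is to reduce the identity \eqref{25} --- via two direct differentiations of $\xi$ in $z_2$ --- to the single scale-function identity
$$\mathrm{E}_{0}\left(\mathrm{e}^{-q\hat{\tau}_{z_{2}}}\right)=Z^{(q)}(z_{2})-\frac{q\,[W^{(q)}(z_{2})]^{2}}{W^{(q)\prime}(z_{2})},$$
and then to supply this last formula from the exit theory of the spectrally negative process reflected at its supremum, which is the companion of the infimum-reflection formula \eqref{two.side.exit.}.

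First I would record the elementary rules $\frac{\partial}{\partial z_{2}}Z^{(q)}(z_{2})=qW^{(q)}(z_{2})$ and $\frac{\partial}{\partial z_{2}}\overline{Z}^{(q)}(z_{2})=Z^{(q)}(z_{2})$, which are immediate from $Z^{(q)}=1+q\overline{W}^{(q)}$ and the definition of $\overline{Z}^{(q)}$. Writing $\xi=N/D$ with $N=(z_{2}-z_{1}-c)-\phi(\overline{Z}^{(q)}(z_{2})-\overline{Z}^{(q)}(z_{1}))$ and $D=Z^{(q)}(z_{2})-Z^{(q)}(z_{1})$, the quotient rule gives (all derivatives in $z_{2}$) $\partial_{z_{2}}\xi=(N'D-ND')/D^{2}$ with $N'=1-\phi Z^{(q)}(z_{2})$ and $D'=qW^{(q)}(z_{2})$. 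Multiplying by $D^{2}/(qW^{(q)}(z_{2}))$, as required by the inner bracket of \eqref{25}, cancels the denominator and leaves the compact expression
$$\Xi(z_{1},z_{2}):=\frac{D^{2}}{qW^{(q)}(z_{2})}\,\partial_{z_{2}}\xi=\frac{(1-\phi Z^{(q)}(z_{2}))\,D}{qW^{(q)}(z_{2})}-N.$$

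Next I would differentiate $\Xi$ once more in $z_{2}$. The crucial cancellation is that $\partial_{z_{2}}(-N)=-(1-\phi Z^{(q)}(z_{2}))$ kills the constant $+1$ produced when differentiating the first summand, so that after collecting terms everything involving the pure constant and $z_{1},c$ disappears and only
$$\partial_{z_{2}}\Xi=-\left(Z^{(q)}(z_{2})-Z^{(q)}(z_{1})\right)\left[\phi+\frac{(1-\phi Z^{(q)}(z_{2}))\,W^{(q)\prime}(z_{2})}{q\,[W^{(q)}(z_{2})]^{2}}\right]$$
remains. A one-line rearrangement then matches this with the right-hand side of \eqref{25}: pulling the factor $\frac{(Z^{(q)}(z_{2})-Z^{(q)}(z_{1}))W^{(q)\prime}(z_{2})}{q[W^{(q)}(z_{2})]^{2}}$ out of that right-hand side reproduces exactly the bracket above, provided $Z^{(q)}(z_{2})-q[W^{(q)}(z_{2})]^{2}/W^{(q)\prime}(z_{2})$ is inserted in place of $\mathrm{E}_{0}(\mathrm{e}^{-q\hat{\tau}_{z_{2}}})$.

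The hard part will be the remaining probabilistic ingredient, namely the Laplace transform of $\hat{\tau}_{z_{2}}$. Under $\mathrm{P}_{0}$ one has $\overline{X}(t)\vee 0=\overline{X}(t)$, so $\hat{\tau}_{z_{2}}$ is the first time the drawdown $\overline{X}-X$ (equivalently, $X$ reflected at its running supremum) exceeds $z_{2}$, and I would obtain $\mathrm{E}_{0}(\mathrm{e}^{-q\hat{\tau}_{z_{2}}})=Z^{(q)}(z_{2})-q[W^{(q)}(z_{2})]^{2}/W^{(q)\prime}(z_{2})$ from the standard fluctuation identities for the supremum-reflected spectrally negative process. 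A useful consistency check is a Brownian motion $\psi(\theta)=\mu\theta+\tfrac12\sigma^{2}\theta^{2}$, for which $Z^{(q)}$ and $W^{(q)}$ are explicit hyperbolic functions and the identity can be verified by direct computation (in the symmetric case $\mu=0,\sigma=1$ it reduces to the classical $1/\cosh(\sqrt{2q}\,z_{2})$). Substituting this identity into the displayed expression for $\partial_{z_{2}}\Xi$ yields \eqref{25}; the only analytic point to keep in mind is the differentiability of $W^{(q)}$, and hence of $\xi$ and $\Xi$, that legitimizes the two differentiations, which is guaranteed under the standing assumption that $X$ is neither a pure increasing linear drift nor the negative of a subordinator.
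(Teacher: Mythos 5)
Your proposal is correct and takes essentially the same route as the paper's own proof: a quotient-rule computation of $\partial_{z_2}\xi$, multiplication by the prefactor $[Z^{(q)}(z_{2})-Z^{(q)}(z_{1})]^{2}/(qW^{(q)}(z_{2}))$, a second differentiation in $z_2$, and the substitution of the fluctuation identity $\mathrm{E}_{0}\left(\mathrm{e}^{-q\hat{\tau}_{z_{2}}}\right)=Z^{(q)}(z_{2})-q[W^{(q)}(z_{2})]^{2}/W^{(q)\prime}(z_{2})$, which is exactly Proposition 2(ii) of the Pistorius (2004) reference that the paper quotes rather than re-derives, so your appeal to the standard supremum-reflected exit theory (with the correct Brownian sanity check) matches the paper's use of it. Your single-quotient bookkeeping $\xi=N/D$ and the observed cancellation merely streamline the paper's term-by-term algebra; the only slight overstatement is your final remark that differentiability of $W^{(q)}$ is guaranteed by the standing assumption on $X$ --- the paper itself exhibits a compound Poisson example where continuous differentiability fails, so, as in the paper, the identity should be read at points where $W^{(q)\prime}$ exists.
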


\begin{proof}\quad
It follows from Proposition 2 (ii) of \cite{Pistorius04} that
\begin{eqnarray}\label{25.ax}
&&
\mathrm{E}_{0}\left(\mathrm{e}^{-q\hat{\tau}_{z_{2}}}\right)=
Z^{(q)}(z_{2})-
\frac{q[W^{(q)}(z_{2})]^{2}}{W^{(q)\prime}(z_{2})}
.
\end{eqnarray}
By algebraic manipulations one has
\begin{eqnarray*}
&&\frac{\partial}{\partial z_2}\left[\frac{\overline{Z}^{(q)}(z_{2})
-\overline{Z}^{(q)}(z_{1})}{Z^{(q)}(z_{2})-Z^{(q)}(z_{1})}\right]
\nonumber\\
&=&\frac{\frac{Z^{(q)}(z_{2})}{qW^{(q)}(z_{2})}
\left[Z^{(q)}(z_{2})-Z^{(q)}(z_{1})\right]-\overline{Z}^{(q)}(z_{2})+\overline{Z}^{(q)}(z_{1})}
{\left[Z^{(q)}(z_{2})-Z^{(q)}(z_{1})\right]^{2}\big/qW^{(q)}(z_{2})},
\end{eqnarray*}
and
\begin{eqnarray*}
& &\frac{\partial}{\partial z_2}\left(\frac{Z^{(q)}(z_{2})}{qW^{(q)}(z_{2})}
\left[Z^{(q)}(z_{2})-Z^{(q)}(z_{1})\right]-\overline{Z}^{(q)}(z_{2})+\overline{Z}^{(q)}(z_{1})\right)\\
&=&qW^{(q)}(z_{2})\frac{Z^{(q)}(z_{2})}{qW^{(q)}(z_{2})}
+[Z^{(q)}(z_{2})-Z^{(q)}(z_{1})]\frac{\partial}{\partial z_2}
\left[\frac{Z^{(q)}(z_{2})}{qW^{(q)}(z_{2})}\right]-Z^{(q)}(z_{2})\\
&=&\left(Z^{(q)}(z_{2})-Z^{(q)}(z_{1})\right)
\left(1-\frac{Z^{(q)}(z_{2})[W^{(q)}(z_{2})]{\color{red}^{\prime}}}{q[W^{(q)}(z_{2})]^{2}}\right)\\
&=&\left(Z^{(q)}(z_{2})-Z^{(q)}(z_{1})\right)\frac{W^{(q)\prime}(z_{2})}{[W^{(q)}(z_{2})]^{2}}
\left(\frac{[W^{(q)}(z_{2})]^{2}}{W^{(q)\prime}(z_{2})}-\frac{Z^{(q)}(z_{2})}{q}\right).
\end{eqnarray*}
One also has
$$
\frac{\partial}{\partial z_2}\left[\frac{z_{2}-z_{1}-c}{Z^{(q)}(z_{2})-Z^{(q)}(z_{1})}\right]
=\frac{\frac{Z^{(q)}(z_{2})-Z^{(q)}(z_{1})}{qW^{(q)}(z_{2})}-(z_{2}-z_{1}-c)}{\left[Z^{(q)}(z_{2})-Z^{(q)}(z_{1})\right]^{2}\big/qW^{(q)}(z_{2})},
$$
and
$$
\frac{\partial}{\partial z_2}\left[\frac{ Z^{(q)}(z_{2})-Z^{(q)}(z_{1})}{qW^{(q)}(z_{2})}-z_{2}+z_{1}+c\right]
=\frac{\left[Z^{(q)}(z_{2})-Z^{(q)}(z_{1})\right]W^{(q)\prime}(z_{2})}{-q[W^{(q)}(z_{2})]^{2}}.
$$
Combining the above facts, we obtain
\begin{eqnarray}\label{repr.of.deri.xi}
\frac{\partial}{\partial z_{2}}\xi(z_{1},z_{2})&=&\frac{1}{Z^{(q)}(z_{2})-Z^{(q)}(z_{1})}-\frac{[Z^{(q)}(z_{2})]^{\prime}(z_{2}-z_{1}-c)}{[Z^{(q)}(z_{2})-Z^{(q)}(z_{1})]^{2}}
\nonumber\\
&&
\hspace{-1cm}
-\phi \frac{Z^{(q)}(z_{2})}{Z^{(q)}(z_{2})-Z^{(q)}(z_{1})}
+\phi\frac{Z^{(q)\prime}(z_{2})[\overline{Z}^{(q)}(z_{2})-\overline{Z}^{(q)}(z_{1})]}{[Z^{(q)}(z_{2})-Z^{(q)}(z_{1})]^{2}},
\end{eqnarray}
which together with \eqref{25.ax} yields \eqref{25}.
\qed
\end{proof}

The following result characterizes
the optimal IDCI strategy among all $(z_{1},z_{2})$ strategies.

\begin{prop}\label{Pro.2}
Let $\xi$ be given by \eqref{auxiliary.func.}, then
$\mathcal{M}\neq\emptyset$ and, for $(z_{1},z_{2})\in\mathcal{M}$, we have
\begin{equation}\label{parc2.=0}
\frac{z_{2}-z_{1}-c}{Z^{(q)}(z_{2})-Z^{(q)}(z_{1})}-\phi\frac{\overline{Z}^{(q)}(z_{2})-\overline{Z}^{(q)}(z_{1})}{Z^{(q)}(z_{2})-Z^{(q)}(z_{1})}
=\frac{1-\phi Z^{(q)}(z_{2})}{qW^{(q)}(z_{2})}.
\end{equation}
\end{prop}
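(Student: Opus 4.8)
The plan is to recognize that the algebraic identity \eqref{parc2.=0} is nothing but the first-order stationarity of $\xi$ in its second argument. Indeed, the left-hand side of \eqref{parc2.=0} is exactly $\xi(z_{1},z_{2})$ as given by \eqref{auxiliary.func.}, and rearranging \eqref{repr.of.deri.xi} shows that $\frac{\partial}{\partial z_{2}}\xi(z_{1},z_{2})=0$ holds if and only if $\xi(z_{1},z_{2})=\frac{1-\phi Z^{(q)}(z_{2})}{qW^{(q)}(z_{2})}$, i.e. if and only if \eqref{parc2.=0} holds. Thus the whole statement reduces to two claims: that the supremum defining $\mathcal{M}$ in \eqref{Def. set of maximizers} is attained (so $\mathcal{M}\neq\emptyset$), and that at every maximizer the inner slice $z_{2}\mapsto\xi(z_{1},z_{2})$ is stationary, which will follow once I show the maximizer is interior in $z_{2}$, that is $z_{2}>z_{1}+c$.

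First I would fix $z_{1}$ and study the sign of $\frac{\partial}{\partial z_{2}}\xi$ along the slice through the auxiliary quantity $\Psi(z_{1},z_{2}):=\frac{[Z^{(q)}(z_{2})-Z^{(q)}(z_{1})]^{2}}{qW^{(q)}(z_{2})}\,\frac{\partial}{\partial z_{2}}\xi(z_{1},z_{2})$, which has the same sign as $\frac{\partial}{\partial z_{2}}\xi$. By the preceding Lemma, $\frac{\partial}{\partial z_{2}}\Psi$ equals a non-negative prefactor times $\frac{1}{q}(\phi\,\mathrm{E}_{0}(\mathrm{e}^{-q\hat{\tau}_{z_{2}}})-1)$, and since $\mathrm{E}_{0}(\mathrm{e}^{-q\hat{\tau}_{z_{2}}})$ decreases from $1$ (as $z_{2}\downarrow0$) to $0$ (as $z_{2}\uparrow\infty$) while $\phi>1$, this bracket is strictly decreasing and vanishes exactly once; hence $\Psi(z_{1},\cdot)$ is unimodal, increasing then decreasing, on $[z_{1}+c,\infty)$. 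To locate the endpoints I would establish (i) $\Psi(z_{1},z_{1}+c)>0$ and (ii) $\Psi(z_{1},z_{2})\to-\infty$ as $z_{2}\to\infty$. Claim (i) rests on the monotonicity of $Z^{(q)}/W^{(q)}$: rewriting \eqref{25.ax} as $q[W^{(q)}(z)]^{2}\le Z^{(q)}(z)W^{(q)\prime}(z)$ (valid because $\mathrm{E}_{0}(\mathrm{e}^{-q\hat{\tau}_{z}})\ge0$) forces $Z^{(q)}/W^{(q)}$ to be non-increasing, whence the cross term $Z^{(q)}(z_{2})W^{(q)}(u)-W^{(q)}(z_{2})Z^{(q)}(u)\le0$ for $u\le z_{2}$; feeding this into the value of $\Psi$ at $z_{2}-z_{1}-c=0$ gives $\Psi(z_{1},z_{1}+c)>0$. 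Claim (ii) comes from the scale-function asymptotics $W^{(q)}(x)\sim\mathrm{e}^{\Phi_{q}x}/\psi^{\prime}(\Phi_{q})$, under which the two exponentially large contributions to $\Psi$ cancel at leading order and the linear term $-(z_{2}-z_{1}-c)$ survives. Unimodality together with (i)--(ii) produces a single sign change of $\Psi$, hence of $\frac{\partial}{\partial z_{2}}\xi$, from $+$ to $-$, so $\xi(z_{1},\cdot)$ has a unique interior maximizer $z_{2}(z_{1})>z_{1}+c$ characterized by $\frac{\partial}{\partial z_{2}}\xi=0$, equivalently by \eqref{parc2.=0}.

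For non-emptiness of $\mathcal{M}$ I would run a coercivity argument on the closed domain $\{z_{1}\ge0,\,z_{2}\ge z_{1}+c\}$. Using $\overline{Z}^{(q)}(z_{2})-\overline{Z}^{(q)}(z_{1})=\int_{z_{1}}^{z_{2}}Z^{(q)}\ge\frac{1}{\Phi_{q}}[Z^{(q)}(z_{2})-Z^{(q)}(z_{1})]$ (again because $Z^{(q)}/(qW^{(q)})\ge1/\Phi_{q}$) together with $Z^{(q)}(z_{2})-Z^{(q)}(z_{1})\ge qW^{(q)}(z_{1})(z_{2}-z_{1})$, one obtains the uniform upper bound $\xi(z_{1},z_{2})\le\frac{1}{qW^{(q)}(z_{1})}-\frac{\phi}{\Phi_{q}}$, whose right-hand side tends to $-\phi/\Phi_{q}$ as $z_{1}\to\infty$. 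On the other hand the interior maximum already produced at $z_{1}=0$ satisfies $\xi(0,z_{2}(0))>\lim_{z_{2}\to\infty}\xi(0,z_{2})=-\phi/\Phi_{q}$, so the global supremum strictly exceeds $-\phi/\Phi_{q}$. Consequently any maximizing sequence stays in a bounded region -- the coordinate $z_{1}$ is controlled by the upper bound, and $z_{2}$ because $\xi(z_{1},z_{2})\to-\phi/\Phi_{q}$ as $z_{2}\to\infty$ uniformly on compact $z_{1}$-sets -- and continuity of $\xi$ delivers a maximizer $(z_{1}^{*},z_{2}^{*})$. Applying the slice analysis above to this point (and to any element of $\mathcal{M}$) forces $z_{2}^{*}=z_{2}(z_{1}^{*})>z_{1}^{*}+c$ and $\frac{\partial}{\partial z_{2}}\xi(z_{1}^{*},z_{2}^{*})=0$, i.e. \eqref{parc2.=0}.

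The step I expect to be the main obstacle is establishing the correct global sign pattern of $\frac{\partial}{\partial z_{2}}\xi$ across the entire ray $z_{2}\in[z_{1}+c,\infty)$: combining the Lemma's unimodality of $\Psi$ with the two endpoint estimates, where the boundary positivity genuinely needs the monotonicity of $Z^{(q)}/W^{(q)}$ extracted from \eqref{25.ax}, and the divergence at infinity needs the delicate leading-order cancellation in the scale-function asymptotics. The coercive existence argument is comparatively routine once the uniform upper bound on $\xi$ is in hand.
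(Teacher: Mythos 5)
Your proposal is correct in substance but takes a genuinely different route from the paper at the two places where real work is needed. The reduction of \eqref{parc2.=0} to $\frac{\partial}{\partial z_{2}}\xi(z_{1},z_{2})=0$ via \eqref{repr.of.deri.xi} is the same in both, as is the use of the lemma \eqref{25} linking $\Psi(z_{1},z_{2}):=\frac{[Z^{(q)}(z_{2})-Z^{(q)}(z_{1})]^{2}}{qW^{(q)}(z_{2})}\frac{\partial}{\partial z_{2}}\xi(z_{1},z_{2})$ to $\mathrm{E}_{0}(\mathrm{e}^{-q\hat{\tau}_{z_{2}}})$. Where you diverge: (a) to exclude maximizers on the line $z_{2}=z_{1}+c$, the paper runs a probabilistic coupling of the $(z_{1},z_{2})$ and $(z_{1}',z_{2}')$ strategies of equal width $c$ (comparing the dividend times $T_{n}^{+}$, hence the injection processes) to obtain the monotonicity \eqref{rule.out.opti.on.the.line.} and then a contradiction with \eqref{boun.max.0}; you instead prove the endpoint derivative is strictly positive, and your sketch checks out: at $z_{2}=z_{1}+c$ one has $\Psi(z_{1},z_{1}+c)=\int_{z_{1}}^{z_{2}}\bigl[\frac{W^{(q)}(u)}{W^{(q)}(z_{2})}+\frac{\phi}{W^{(q)}(z_{2})}\bigl(Z^{(q)}(u)W^{(q)}(z_{2})-Z^{(q)}(z_{2})W^{(q)}(u)\bigr)\bigr]\mathrm{d}u>0$, where the bracketed cross term is non-negative exactly because \eqref{25.ax} forces $Z^{(q)}/W^{(q)}$ to be non-increasing --- a purely analytic and arguably cleaner argument than the paper's pathwise comparison; (b) for $\mathcal{M}\neq\emptyset$, the paper shows $\frac{\partial}{\partial z_{2}}\Psi\leq-\tfrac14$ for large $z_{2}$ and deduces eventual negativity of $\frac{\partial}{\partial z_{2}}\xi$, whereas you run an explicit coercivity argument with the uniform bound $\xi(z_{1},z_{2})\leq\frac{1}{qW^{(q)}(z_{1})}-\frac{\phi}{\Phi_{q}}$ (both ingredients, $Z^{(q)}(u)/(qW^{(q)}(u))\geq1/\Phi_{q}$ and $Z^{(q)}(z_{2})-Z^{(q)}(z_{1})\geq qW^{(q)}(z_{1})(z_{2}-z_{1})$, are valid) together with the strict lower bound for the supremum from the $z_{1}=0$ slice; this handles the joint unboundedness in $(z_{1},z_{2})$ more explicitly than the paper's argument, which is stated for fixed $z_{1}$. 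Two small repairs: the claim that $\mathrm{E}_{0}(\mathrm{e}^{-q\hat{\tau}_{z_{2}}})$ decreases \emph{from} $1$ fails for bounded-variation paths (by \eqref{25.ax} the limit at $0+$ is $1-qW^{(q)}(0)^{2}/W^{(q)\prime}(0+)<1$ there), so the bracket $\phi\,\mathrm{E}_{0}(\mathrm{e}^{-q\hat{\tau}_{z_{2}}})-1$ vanishes \emph{at most} once, not exactly once --- which is all your unimodality argument needs; and the step you flag as the main obstacle, $\Psi\to-\infty$ via ``leading-order cancellation'' of exponentials, is more delicately handled and in fact unnecessary: your own setup already gives $\frac{\partial}{\partial z_{2}}\Psi\to q\cdot(-\tfrac1q)=-1$, since the prefactor in \eqref{25} tends to $q$ and the bracket to $-1/q$, which is exactly how the paper concludes and yields the linear divergence of $\Psi$ directly.
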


\begin{proof}\quad
By the definition of $\hat{\tau}_{z_{2}}$ one knows that it is increasing with respect to $z_{2}$ and $\lim\limits_{z_{2}\to\infty}\hat{\tau}_{z_{2}}=\infty$, implying $\lim\limits_{z_{2}\to\infty}\mathrm{E}_{0}\left(\mathrm{e}^{-q\hat{\tau}_{z_{2}}}\right)=0$. Hence there exists $\bar{z}_{0}\in[0,\infty)$ such that
\begin{equation}\label{upper.boun.of.brac.}
-\frac{1}{q}+\frac{\phi}{q}\mathrm{E}_{0}\left(\mathrm{e}^{-q\hat{\tau}_{z_{2}}}\right)\leq -\frac{1}{2q},
\quad z_{2}\in[\bar{z}_{0},\infty).
\end{equation}
On the other hand, we have
$$
\frac{W^{(q)}(z)}{W^{(q)\prime}(z)}=\frac{\mathrm{e}^{\Phi_{q}z}W_{\Phi_{q}}(z)}{[\mathrm{e}^{\Phi_{q}z}W_{\Phi_{q}}(z)]^{\prime}}
=\frac{1}{\Phi_{q}+\frac{W_{\Phi_{q}}^{\prime}(z)}{W_{\Phi_{q}}(z)}}\longrightarrow\frac{1}{\Phi_{q}},
$$
where $W^{(q)}(z)=\mathrm{e}^{\Phi_{q}z}W_{\Phi_{q}}(z)$ and $\lim\limits_{z\rightarrow\infty}\frac{W_{\Phi_{q}}^{\prime}(z)}
{W_{\Phi_{q}}(z)}=0$ (see \cite{Kyprianou06}, \cite{Loeffen09a} and Remark 4.5 of \cite{Wang18}).
Hence, by the rule of L'H\^{o}pital, we have
\begin{eqnarray*}
&&\lim_{z_{2}\to\infty}\frac{[Z^{(q)}(z_{2})-Z^{(q)}(z_{1})]W^{(q)\prime}(z_{2})}{[W^{(q)}(z_{2})]^{2}}
\nonumber\\
&=&\lim_{z_{2}\to\infty}\frac{Z^{(q)}(z_{2})-Z^{(q)}(z_{1})}{W^{(q)}(z_{2})}
\lim_{z_{2}\to\infty}\frac{W^{(q)\prime}(z_{2})}{W^{(q)}(z_{2})}\\
&=&\lim_{z_{2}\to\infty}\frac{qW^{(q)}(z_{2})}{W^{(q)\prime}(z_{2})}
\lim_{z_{2}\to\infty}\frac{W^{(q)\prime}(z_{2})}{W^{(q)}(z_{2})}=q.
\end{eqnarray*}
So, there exists $\bar{\bar{z}}_{0}\in[0,\infty)$ such that
$$
\frac{[Z^{(q)}(z_{2})-Z^{(q)}(z_{1})]W^{(q)\prime}(z_{2})}{[W^{(q)}(z_{2})]^{2}}\ge\frac{q }{2},\quad z_{2}\in[\bar{\bar{z}}_{0},\infty),
$$
which combined with \eqref{25} and \eqref{upper.boun.of.brac.} yields
$$
\frac{\partial}{\partial z_2}\left(\frac{[Z^{(q)}(z_{2})-Z^{(q)}(z_{1})]^{2}}{qW^{(q)}(z_{2})}
\frac{\partial}{\partial z_{2}}\xi(z_{1},z_{2})\right)
\leq -\frac{1}{4}, \quad z_{2}\in [\bar{z}_{0}\vee\bar{\bar{z}}_{0},\infty).
$$
Owing to  \eqref{repr.of.deri.xi}, it holds that
$$
\frac{[Z^{(q)}(\bar{z}_{0}\vee\bar{\bar{z}}_{0})-Z^{(q)}(z_{1})]^{2}}{qW^{(q)}(\bar{z}_{0}\vee\bar{\bar{z}}_{0})}
\frac{\partial}{\partial z_{2}}\xi(z_{1},\bar{z}_{0}\vee\bar{\bar{z}}_{0})<\infty.
  $$
Thus,  there exists $z_{0}\in(\bar{z}_{0}\vee\bar{\bar{z}}_{0},\infty)$ such that
$$
\frac{[Z^{(q)}(z_{2})-Z^{(q)}(z_{1})]^{2}}{qW^{(q)}(z_{2})}\frac{\partial}{\partial z_{2}}\xi(z_{1},z_{2})<0,\quad z_{2}\in [z_{0},\infty),
$$
which yields
$\frac{\partial}{\partial z_{2}}\xi(z_{1},z_{2})<0$ for $z_{2}\in [z_{0},\infty)$. As a result,
\begin{eqnarray}\label{boun.max.0}
\sup\limits_{0\leq z_{1},z_{2}<\infty,z_{1}+c\leq z_{2}}\xi(z_{1},z_{2})
=\sup\limits_{0\leq z_{1},z_{2}\leq z_{0},z_{1}+c\leq z_{2}}\xi(z_{1},z_{2}),
\end{eqnarray}
which, plus the continuity of $\xi$ over $\{(z_{1},z_{2});z_{1},z_{2}\in[0, z_{0}],z_{1}+c\leq z_{2}\}$,
yields
$$\emptyset\ne\mathcal{M}\subseteq \{(z_{1},z_{2});z_{1},z_{2}\in[0, z_{0}],z_{1}+c\leq z_{2}\}.$$

For IDCI strategies $(z_{1}, z_{2})$ and $(z_{1}^{\prime}, z_{2}^{\prime})$ with $z_{2}-z_{1}=z_{2}^{\prime}-z_{1}^{\prime}=c$ and $z_{2}^{\prime}>z_{2}$, let $T_{n}^{+\prime}$ be defined via \eqref{Tn} with $z_{i}$ replaced by $z_{i}^{\prime}$ (i=1,2). Then, by \eqref{Tn}, \eqref{D.repre.} and the technique of mathematical induction, we have,
for $x\in[0,z_{2}]$,
$$T_{n}^{+\prime}> T_{n}^{+},\quad n\geq1,$$
and hence  $D_{z_{1}^{\prime}}^{z_{2}^{\prime}}(t)\leq D_{z_{1}}^{z_{2}}(t)$ for $t\geq0$, which implies, for $t\geq0$,
$$
R_{z_{1}^{\prime}}^{z_{2}^{\prime}}(t)=
-\inf\limits_{0\leq s\leq t}[X(s)-D_{z_{1}^{\prime}}^{z_{2}^{\prime}}(s)]\wedge 0
\leq
-\inf\limits_{0\leq s\leq t}[X(s)-D_{z_{1}}^{z_{2}}(s)]\wedge 0
=R_{z_{1}}^{z_{2}}(t)
.$$
Therefore, we have
$$\mathrm{E}_x\left(\int_{0}^{\infty}\mathrm{e}^{-qs}\mathrm{d}R_{z_{1}}^{z_{2}}(s)\right)\geq \mathrm{E}_x\left(\int_{0}^{\infty}\mathrm{e}^{-qs}\mathrm{d}R_{z_{1}^{\prime}}^{z_{2}^{\prime}}(s)\right),\,\,
x\in[0,z_{2}]=[0,z_{2}]\cap[0,z_{2}^{\prime}],$$
which, combined with \eqref{gx< z_{2}} yields, for $z_{2}-z_{1}=z_{2}^{\prime}-z_{1}^{\prime}=c,\,z_{2}^{\prime}>z_{2}$,
\begin{equation}\label{rule.out.opti.on.the.line.}
\frac{\overline{Z}^{(q)}(z_{2})-\overline{Z}^{(q)}(z_{1})}{Z^{(q)}(z_{2})-Z^{(q)}(z_{1})}\geq \frac{\overline{Z}^{(q)}(z_{2}^{\prime})-\overline{Z}^{(q)}(z_{1}^{\prime})}{Z^{(q)}(z_{2}^{\prime})-Z^{(q)}(z_{1}^{\prime})}.
\end{equation}
By (\ref{rule.out.opti.on.the.line.}) and the definition of $\xi$ in (\ref{auxiliary.func.}), we may rule out the possibility that $\xi$ attains its  maximum value in the line $z_{2}=z_{1}+c$.
Indeed, if $(z_{1},z_{2})$ is a  maximum point of $\xi$ with $z_{2}=z_{1}+c$, then by (\ref{rule.out.opti.on.the.line.}) we should have $z_{2}=z_{1}=\infty$, contradicting \eqref{boun.max.0}.

Now, we have proved that $\emptyset\neq\mathcal{M}\subseteq \{(z_{1},z_{2});z_{1},z_{2}\in[0, z_{0}],z_{1}+c< z_{2}\}$.
Thus, if $(z_{1},z_{2})$ is a maximizer of $\xi(z_{1},z_{2})$, then it holds that $\frac{\partial}{\partial z_{2}}\xi(z_{1},z_{2})=0$, i.e., \eqref{parc2.=0} holds true.
\qed
\end{proof}

For an IDCI strategy $(z_{1},z_{2})\in\mathcal{M}$, the following result, an immediate consequence of  \eqref{value.of.z1z2.1}, \eqref{value.of.z1z2.2}, and \eqref{parc2.=0}, presents an alternative expression for the value function $V_{z_{1}}^{z_{2}}$.  It is interesting to see that this expression  is independent of $z_{1}$, which is not the case for arbitrary IDCI strategy $(z_{1},z_{2})$ (see \eqref{value.of.z1z2.1} and \eqref{value.of.z1z2.2}).

\begin{prop}\label{Pro.3} For $(z_{1},z_{2})\in\mathcal{M}$, the value function of the $(z_{1},z_{2})$ IDCI strategy
is
$$
\,\,V_{z_{1}}^{z_{2}}(x)=\left\{\begin{array}{ll}
\phi\left[\overline{Z}^{(q)}(x)+\frac{\psi^{\prime}(0+)}{q}\right]
+ Z^{(q)}(x)\frac{1-\phi Z^{(q)}(z_{2})}{qW^{(q)}(z_{2})},& x\in[0,z_{2}),\\
x-z_{2}+\phi\left[\overline{Z}^{(q)}(z_{2})+\frac{\psi^{\prime}(0+)}{q}\right]
+ Z^{(q)}(z_{2})\frac{1-\phi Z^{(q)}(z_{2})}{qW^{(q)}(z_{2})},& x\in[z_{2},\infty).
\end{array}\right.$$
\end{prop}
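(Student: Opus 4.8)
The plan is to treat this as a direct substitution of the optimality identity \eqref{parc2.=0} into the two closed forms \eqref{value.of.z1z2.1} and \eqref{value.of.z1z2.2} already established in Proposition \ref{Pro.1}. The crucial structural observation is that the bracketed coefficient of $Z^{(q)}(x)$ in \eqref{value.of.z1z2.1} is exactly $\xi(z_1,z_2)$ as defined in \eqref{auxiliary.func.}, so that on $[0,z_2]$ one has the compact representation $V_{z_{1}}^{z_{2}}(x)=Z^{(q)}(x)\,\xi(z_{1},z_{2})+\phi\bigl(\overline{Z}^{(q)}(x)+\psi^{\prime}(0+)/q\bigr)$ recorded just after \eqref{auxiliary.func.}. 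Everything then reduces to replacing $\xi$ by its value on $\mathcal{M}$.

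First I would settle the region $x\in[0,z_2)$. Since $(z_1,z_2)\in\mathcal{M}$, Proposition \ref{Pro.2} asserts that \eqref{parc2.=0} holds, i.e.\ $\xi(z_1,z_2)=\bigl(1-\phi Z^{(q)}(z_2)\bigr)\big/\bigl(qW^{(q)}(z_2)\bigr)$. Inserting this value of $\xi$ into the compact representation above reproduces the first branch of the claimed formula verbatim, after merely reordering the two summands. In particular this already computes
\begin{equation*}
V_{z_{1}}^{z_{2}}(z_{2})=\phi\Bigl[\overline{Z}^{(q)}(z_{2})+\tfrac{\psi^{\prime}(0+)}{q}\Bigr]+Z^{(q)}(z_{2})\,\frac{1-\phi Z^{(q)}(z_{2})}{qW^{(q)}(z_{2})}.
\end{equation*}

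For the region $x\in[z_2,\infty)$, rather than substituting \eqref{parc2.=0} into the more $z_1$-entangled expression \eqref{value.of.z1z2.2}, I would first establish the clean identity $V_{z_{1}}^{z_{2}}(x)=(x-z_2)+V_{z_{1}}^{z_{2}}(z_2)$ for $x\ge z_2$, valid for \emph{every} admissible pair $(z_1,z_2)$ and not just those in $\mathcal{M}$. Comparing \eqref{value.of.z1z2.2} with the compact form at $x=z_2$, the terms $\phi\psi^{\prime}(0+)/q$ and $(z_2-z_1-c)Z^{(q)}(z_2)\big/\bigl(Z^{(q)}(z_2)-Z^{(q)}(z_1)\bigr)$ cancel, and what remains is the one-line bilinear relation
\begin{equation*}
\overline{Z}^{(q)}(z_{2})\bigl[Z^{(q)}(z_{2})-Z^{(q)}(z_{1})\bigr]-Z^{(q)}(z_{2})\bigl[\overline{Z}^{(q)}(z_{2})-\overline{Z}^{(q)}(z_{1})\bigr]=Z^{(q)}(z_{2})\overline{Z}^{(q)}(z_{1})-\overline{Z}^{(q)}(z_{2})Z^{(q)}(z_{1}),
\end{equation*}
which is immediate on expanding the left-hand side. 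Intuitively this identity just records that any surplus above $z_2$ is paid out at once with no transaction cost, so the value at level $x>z_2$ exceeds $V_{z_{1}}^{z_{2}}(z_2)$ by exactly the excess $x-z_2$. Adding $x-z_2$ to the value of $V_{z_{1}}^{z_{2}}(z_2)$ computed in the previous paragraph then yields the second branch.

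The main obstacle, such as it is, is purely the algebraic bookkeeping in the second region; there is no conceptual difficulty, which is consistent with the statement being flagged as an immediate consequence of \eqref{value.of.z1z2.1}, \eqref{value.of.z1z2.2} and \eqref{parc2.=0}. I prefer the continuity-at-$z_2$ route precisely because it prevents the $z_1$-dependence from reappearing: the entire content of the proposition is that, once $\xi$ is replaced by its value on $\mathcal{M}$ via \eqref{parc2.=0}, all residual $z_1$ terms cancel, leaving an expression in $x$ and $z_2$ alone.
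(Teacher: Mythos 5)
Your proof is correct and follows essentially the same route as the paper, which gives no separate argument but declares the proposition ``an immediate consequence'' of substituting \eqref{parc2.=0} into \eqref{value.of.z1z2.1} and \eqref{value.of.z1z2.2} --- precisely the computation you carry out. Your organization of the second branch via $V_{z_{1}}^{z_{2}}(x)=(x-z_{2})+V_{z_{1}}^{z_{2}}(z_{2})$ and the bilinear cancellation identity is just a tidier bookkeeping of that same substitution, and the algebra you record checks out.
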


\begin{rem}
\label{rem1rem1}
Given $(z_{1},z_{2})\in\mathcal{M}$, one can verify that
$$
[V_{z_{1}}^{z_{2}}(x)]^{\prime}=\left\{\begin{array}{ll}
\phi Z^{(q)}(x)+W^{(q)}(x)\frac{1-\phi Z^{(q)}(z_{2})}{W^{(q)}(z_{2})}, &x\in[0,z_{2}),\\
1,&x\in(z_{2},\infty),
\end{array}\right.
$$
is continuous over $[0,\infty)$. If the scale function is differentiable, then one can also verify that
$$
[V_{z_{1}}^{z_{2}}(x)]^{\prime\prime}=\left\{\begin{array}{ll}
q\phi W^{(q)}(x)+[W^{(q)}(x)]^{\prime}\frac{1-\phi Z^{(q)}(z_{2})}{W^{(q)}(z_{2})}, &x\in[0,z_{2}),\\
0, &x\in(z_{2},\infty),
\end{array}\right.
$$
 is continuous on $[0,z_{2})$ and $(z_{2},\infty)$. However, $[V_{z_{1}}^{z_{2}}(x)]^{\prime\prime}$ is not evidently continuous at $z_{2}$. In fact, twice differentiability at $z_{2}$ is not guaranteed even if continuous differentiability is imposed on $W^{(q)}$.
Furthermore, if the scale function is only assumed to be piece-wise continuously differentiable over all compact subsets of $[0,\infty)$ (as in  Lemmas \ref{Lem.1a}-\ref{Lem.1} and Theorem \ref{HJB}), then $[V_{z_{1}}^{z_{2}}(x)]^{\prime\prime}$ is also piece-wise well-defined and piece-wise continuous.
\qed
\end{rem}

The following result characterizes several desirable properties of $V_{z_{1}}^{z_{2}}$ for $(z_{1},z_{2})\in\mathcal{M}$.

\begin{prop}\label{Pro.4}
Given $(z_{1},z_{2})\in\mathcal{M}$, $V_{z_{1}}^{z_{2}}$ is continuous and $[V_{z_{1}}^{z_{2}}]^{\prime}(x)\leq \phi$ over $[0,\infty)$, and
\begin{equation}\label{compar.property}
V_{z_{1}}^{z_{2}}(x)-V_{z_{1}}^{z_{2}}(y)\geq x-y-c,\quad 0\leq y< y+c\leq x.\nonumber
\end{equation}
\end{prop}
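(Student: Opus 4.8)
The plan is to establish the three assertions in turn from the closed form of $V_{z_1}^{z_2}$ in Proposition \ref{Pro.3}, the derivative formula in Remark \ref{rem1rem1}, and the maximality encoded in \eqref{Def. set of maximizers}. Write $\xi:=\xi(z_1,z_2)$; by \eqref{parc2.=0} one has $\xi=\frac{1-\phi Z^{(q)}(z_2)}{qW^{(q)}(z_2)}$, and since $\phi>1$ and $Z^{(q)}\geq1$ this forces $\xi<0$. Continuity is immediate: each branch in Proposition \ref{Pro.3} is continuous because $W^{(q)},Z^{(q)},\overline{Z}^{(q)}$ are, and inserting $x=z_2$ into either branch returns the common value $\phi[\overline{Z}^{(q)}(z_2)+\psi'(0+)/q]+Z^{(q)}(z_2)\frac{1-\phi Z^{(q)}(z_2)}{qW^{(q)}(z_2)}$, so $V_{z_1}^{z_2}$ is continuous — in fact $C^1$, by Remark \ref{rem1rem1} — on $[0,\infty)$.

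For the comparison inequality the key remark is that, on $[0,z_2]$, the representation following \eqref{auxiliary.func.} gives $V_{z_1}^{z_2}(x)=\xi Z^{(q)}(x)+\phi\overline{Z}^{(q)}(x)+\phi\psi'(0+)/q$. Hence for $0\leq y<y+c\leq x\leq z_2$, dividing through by $Z^{(q)}(x)-Z^{(q)}(y)>0$ (recall $Z^{(q)}$ is strictly increasing for $q>0$), the inequality $V_{z_1}^{z_2}(x)-V_{z_1}^{z_2}(y)\geq x-y-c$ is seen to be exactly $\xi\geq\xi(y,x)$, where $\xi(y,x)$ is the quantity \eqref{auxiliary.func.} with lower barrier $y$ and upper barrier $x$. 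As $(y,x)$ is an admissible pair and $(z_1,z_2)\in\mathcal{M}$ maximizes $\xi$, this holds by \eqref{Def. set of maximizers}. To extend to $x>z_2$ I use the affine form $V_{z_1}^{z_2}(x)=x-z_2+V_{z_1}^{z_2}(z_2)$ valid there, which reduces the claim for a pair $(x,y)$ with $x>z_2$ to $V_{z_1}^{z_2}(z_2)-V_{z_1}^{z_2}(y)\geq z_2-y-c$. If $y\geq z_2$ this is trivial; if $y+c\leq z_2$ it is the case already settled (with $z_2$ playing the role of $x$); and if $y\in(z_2-c,z_2)$ then, writing the left side minus the right side as $c-\int_y^{z_2}\big(1-[V_{z_1}^{z_2}]'(u)\big)\,\mathrm{d}u$ and using $[V_{z_1}^{z_2}]'\geq0$ together with $z_2-y<c$, it is bounded below by $c-(z_2-y)>0$.

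It remains to prove the derivative bound (and the auxiliary $[V_{z_1}^{z_2}]'\geq0$ just invoked). On $(z_2,\infty)$ we have $[V_{z_1}^{z_2}]'=1<\phi$, so only $x\in[0,z_2)$ matters, where Remark \ref{rem1rem1} and \eqref{parc2.=0} yield $[V_{z_1}^{z_2}]'(x)=\phi Z^{(q)}(x)+\xi qW^{(q)}(x)$. Factoring out $qW^{(q)}(x)>0$ and using $Z^{(q)}-1=q\overline{W}^{(q)}$ gives
\begin{equation*}
\phi-[V_{z_1}^{z_2}]'(x)=qW^{(q)}(x)\Big(-\xi-\phi\,\tfrac{\overline{W}^{(q)}(x)}{W^{(q)}(x)}\Big),\qquad [V_{z_1}^{z_2}]'(x)=qW^{(q)}(x)\Big(\tfrac{\phi Z^{(q)}(x)}{qW^{(q)}(x)}+\xi\Big),
\end{equation*}
so $[V_{z_1}^{z_2}]'\leq\phi$ reduces to $\overline{W}^{(q)}(x)/W^{(q)}(x)\leq-\xi/\phi$, and $[V_{z_1}^{z_2}]'\geq0$ to $Z^{(q)}(x)/W^{(q)}(x)\geq -q\xi/\phi$, both on $[0,z_2)$. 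A direct substitution of $-\xi=\frac{\phi Z^{(q)}(z_2)-1}{qW^{(q)}(z_2)}$ and $\overline{W}^{(q)}=(Z^{(q)}-1)/q$ shows both inequalities hold strictly at $x=z_2$. Because the two right-hand sides are constants in $x$, it therefore suffices to prove that $x\mapsto\overline{W}^{(q)}(x)/W^{(q)}(x)$ is non-decreasing and that $x\mapsto Z^{(q)}(x)/W^{(q)}(x)$ is non-increasing on $[0,z_2]$: the former then attains its supremum over $[0,z_2)$ in the limit at $z_2$ and the latter its infimum at $z_2$, and the strict slack closes both bounds.

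Establishing these two scale-function monotonicities is the main obstacle, since they amount respectively to log-concavity of $\overline{W}^{(q)}$ and to $q(W^{(q)})^2\leq Z^{(q)}(W^{(q)})'$, which are delicate for a general SNL process. I would attack them through the tilted representation $W^{(q)}(x)=\mathrm{e}^{\Phi_qx}W_{\Phi_q}(x)$ used in the proof of Proposition \ref{Pro.2}, where $W_{\Phi_q}$ is increasing with $W_{\Phi_q}'/W_{\Phi_q}\to0$; after this substitution each ratio becomes a statement about $W_{\Phi_q}$ and its integral amenable to differentiation. Both claims are transparent for Brownian motion: in the driftless case $\overline{W}^{(q)}(x)/W^{(q)}(x)=q^{-1/2}\tanh(\sqrt{q}\,x/2)$ is increasing and $Z^{(q)}(x)/W^{(q)}(x)=\sqrt{q}\coth(\sqrt{q}\,x)$ is decreasing, which also serves as a sanity check on the reductions above and on the example of Section \ref{sec:5}.
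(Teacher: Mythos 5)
Your reductions are correct as far as they go: on $[0,z_{2}]$ the comparison inequality is, after dividing by $Z^{(q)}(x)-Z^{(q)}(y)>0$, exactly the maximality statement $\xi(z_{1},z_{2})\geq\xi(y,x)$ coming from \eqref{Def. set of maximizers} and \eqref{parc2.=0} — this is precisely the paper's inequality \eqref{req.ineq.} — and the algebra converting $[V_{z_{1}}^{z_{2}}]^{\prime}\leq\phi$ into $\overline{W}^{(q)}(x)/W^{(q)}(x)\leq-\xi/\phi$ with strict slack at $x=z_{2}$ is sound. But there is a genuine gap exactly where you place ``the main obstacle'': the two scale-function monotonicities are never proved. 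A plan (``attack them through the tilted representation'') plus a Brownian-motion verification does not cover a general spectrally negative L\'{e}vy process; the tilted form $W^{(q)}(x)=\mathrm{e}^{\Phi_{q}x}W_{\Phi_{q}}(x)$ by itself only yields the asymptotic behaviour of $W^{(q)\prime}/W^{(q)}$, not monotonicity of either ratio, and log-concavity-type properties of scale functions are not automatic (recall the paper explicitly works with kinked $W^{(q)}$, e.g.\ the compound Poisson example with atomic jumps). As written, both the bound $[V_{z_{1}}^{z_{2}}]^{\prime}\leq\phi$ and your corner case $y\in(z_{2}-c,z_{2})$ — which leans on $[V_{z_{1}}^{z_{2}}]^{\prime}\geq0$, hence on $Z^{(q)}/W^{(q)}$ being non-increasing — rest on unproven claims.

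Both gaps are fillable with material already in or cited by the paper, and this is where the paper's proof differs from yours. For $\overline{W}^{(q)}/W^{(q)}$ non-decreasing the paper simply invokes Lemma 1 of \cite{Avram07}, i.e.\ $W^{(q)}(x)\overline{W}^{(q)}(z_{2})\geq W^{(q)}(z_{2})\overline{W}^{(q)}(x)$ for $x\in[0,z_{2}]$, and closes the bound through the combination $q\phi\left(W^{(q)}(x)\overline{W}^{(q)}(z_{2})-W^{(q)}(z_{2})\overline{W}^{(q)}(x)\right)+W^{(q)}(x)(\phi-1)>0$, which is your inequality before dividing by $qW^{(q)}(x)$; it also checks the endpoints $x=0$ and $x=z_{2}$ directly. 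For your second ratio no new work is needed either: the paper's identity \eqref{25.ax} gives $Z^{(q)}(x)-q[W^{(q)}(x)]^{2}/W^{(q)\prime}(x)=\mathrm{E}_{0}\left(\mathrm{e}^{-q\hat{\tau}_{x}}\right)\geq0$, hence $q[W^{(q)}]^{2}\leq Z^{(q)}W^{(q)\prime}$ and $\frac{\mathrm{d}}{\mathrm{d}x}\left(Z^{(q)}(x)/W^{(q)}(x)\right)\leq0$ wherever $W^{(q)}$ is differentiable, which is all your argument requires. One point in your favor once these facts are supplied: the paper disposes of the regime $x\geq z_{2}\geq y$ by applying \eqref{req.ineq.} to the pair $(y,z_{2})$, which strictly requires $y+c\leq z_{2}$, whereas your explicit treatment of the subcase $y\in(z_{2}-c,z_{2})$ via $c-(z_{2}-y)>0$ and $[V_{z_{1}}^{z_{2}}]^{\prime}\geq0$ handles that corner more carefully than the paper's own argument.
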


\begin{proof}\quad
We readily have $\phi Z^{(q)}(z_{2})+W^{(q)}(z_{2})\frac{1-\phi Z^{(q)}(z_{2})}{W^{(q)}(z_{2})}=1<\phi$.
By $W^{(q)}(0)\geq0$, $W^{(q)}(z_{2})>0$, and $1-\phi Z^{(q)}(z_{2})< 0$,
one can verify
$$
\phi Z^{(q)}(0)+W^{(q)}(0)\frac{1-\phi Z^{(q)}(z_{2})}{W^{(q)}(z_{2})}
=\phi +W^{(q)}(0)\frac{1-\phi Z^{(q)}(z_{2})}{W^{(q)}(z_{2})}\leq\phi.
$$
By Lemma 1 of \cite{Avram07} one has $W^{(q)}(x)\overline{W}^{(q)}(z_{2})\geq W^{(q)}(z_{2})\overline{W}^{(q)}(x)$ for $x\in[0,z_{2}]$, which, combined with $\phi>1$ and $W^{(q)}(x)>0$ for $x\in(0,z_{2})$, yields
\begin{eqnarray*}
&&\phi-\left(\phi Z^{(q)}(x)+W^{(q)}(x)\frac{1-\phi Z^{(q)}(z_{2})}{W^{(q)}(z_{2})}\right)\\
&=&-q\phi\int_{0}^{x}W^{(q)}(y)\mathrm{d}y-W^{(q)}(x)\frac{1-\phi-q\phi \int_{0}^{z_{2}}W^{(q)}(y)\mathrm{d}y}{W^{(q)}(z_{2})}\\
&=&\frac{1}{W^{(q)}(z_{2})}\left(q\phi\left(W^{(q)}(x)\overline{W}^{(q)}(z_{2})- W^{(q)}(z_{2})\overline{W}^{(q)}(x)\right)
+W^{(q)}(x)(\phi -1)\right)\\
&>&0,\quad x\in(0,z_{2}).
\end{eqnarray*}
In combination with these arguments, we reach $[V_{z_{1}}^{z_{2}}(x)]^{\prime}\leq \phi$ for $x\in[0,z_{2}]$.



By \eqref{Def. set of maximizers}, $(z_{1},z_{2})\in\mathcal{M}$, \eqref{auxiliary.func.}, and (\ref{parc2.=0}), we have
\begin{eqnarray}
\label{req.ineq.}
&&\frac{x-y-c}{Z^{(q)}(x)-Z^{(q)}(y)}-\phi\frac{\overline{Z}^{(q)}(x)-\overline{Z}^{(q)}(y)}{Z^{(q)}(x)-Z^{(q)}(y)}\nonumber\\
&\leq&
\frac{z_{2}-z_{1}-c}{Z^{(q)}(z_{2})-Z^{(q)}(z_{1})}-\phi\frac{\overline{Z}^{(q)}(z_{2})-\overline{Z}^{(q)}(z_{1})}{Z^{(q)}(z_{2})-Z^{(q)}(z_{1})}\nonumber\\
&=&\frac{1-\phi Z^{(q)}(z_{2})}{qW^{(q)}(z_{2})},\quad 0\leq y,y+c\leq x<\infty,
\end{eqnarray}
from which one can get
\begin{eqnarray*}
&&V_{z_{1}}^{z_{2}}(x)-V_{z_{1}}^{z_{2}}(y)\\
&=&\phi\left(\overline{Z}^{(q)}(x)-\overline{Z}^{(q)}(y)\right)
+ \left(Z^{(q)}(x)-Z^{(q)}(y)\right)\frac{1-\phi Z^{(q)}(z_{2})}{qW^{(q)}(z_{2})}\\
&=& \left(Z^{(q)}(x)-Z^{(q)}(y)\right)\left(\frac{z_{2}-z_{1}-c}{Z^{(q)}(z_{2})-Z^{(q)}(z_{1})}
-\phi\frac{\overline{Z}^{(q)}(z_{2})-\overline{Z}^{(q)}(z_{1})}{Z^{(q)}(z_{2})-Z^{(q)}(z_{1})}\right)\\
&&+\phi\left(\overline{Z}^{(q)}(x)-\overline{Z}^{(q)}(y)\right)\\
&\geq&
\left(Z^{(q)}(x)-Z^{(q)}(y)\right)\left(\frac{x-y-c}{Z^{(q)}(x)-Z^{(q)}(y)}
-\phi\frac{\overline{Z}^{(q)}(x)-\overline{Z}^{(q)}(y)}{Z^{(q)}(x)-Z^{(q)}(y)}\right)\\
&&+\phi\left(\overline{Z}^{(q)}(x)-\overline{Z}^{(q)}(y)\right)\\
&=&x-y-c, \quad0\leq y\leq x\leq z_{2},\,y+c\leq x.
\end{eqnarray*}

Using \eqref{req.ineq.} once again, one can get
\begin{eqnarray*}
V_{z_{1}}^{z_{2}}(x)-V_{z_{1}}^{z_{2}}(y)
&=&x-z_{2}+\phi\left(\overline{Z}^{(q)}(z_{2})+\frac{\psi^{\prime}(0+)}{q}\right)
+ Z^{(q)}(z_{2})\frac{1-\phi Z^{(q)}(z_{2})}{qW^{(q)}(z_{2})}\\
&&-\phi\left(\overline{Z}^{(q)}(x)+\frac{\psi^{\prime}(0+)}{q}\right)
-Z^{(q)}(x)\frac{1-\phi Z^{(q)}(z_{2})}{qW^{(q)}(z_{2})}\\
&\geq&x-z_{2}+z_{2}-y-c, \quad x\geq z_{2}\geq  y,\, y+c\leq x.
\end{eqnarray*}

For $x\geq y\geq z_{2}$ with $y+c\leq x$,
$V_{z_{1}}^{z_{2}}(x)-V_{z_{1}}^{z_{2}}(y)=x-y> x-y-c$.
The proof is completed.
\qed
\end{proof}

\section{Characterization of the optimal IDCI strategy}\label{sec:4}

This section is devoted to verifying that an IDCI strategy $(z_{1},z_{2})\in\mathcal{M}$ serves as the optimal IDCI strategy dominating all other admissible IDCI strategies.

We first present a result characterizing the optimal value function $V$, which helps with motivating the verification Lemma \ref{Lem.1}.

\begin{prop}\label{pro.v1}
The function $V(x)$ is continuous and $V^{\prime}(x)\leq\phi$ over $[0,\infty)$.
Also, $V(y)-V(x)\geq y-x-c$ for $y\geq x\geq0$.
\end{prop}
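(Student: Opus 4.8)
The plan is to derive all three properties intrinsically from $V=\sup_{(D,R)\in\mathcal{D}}V_{(D,R)}$, by exhibiting for each bound a competing admissible strategy whose value exceeds the required quantity and then passing to the supremum; no knowledge of the optimizer is needed (these are the analogues, at the level of $V$, of the properties already proved for $V_{z_1}^{z_2}$ in Proposition \ref{Pro.4}). Throughout I exploit the translation structure of the SNL process: on a common probability space, $X$ started at $y$ equals $X$ started at $x$ shifted up by $y-x$, so strategies can be compared pathwise. For the inequality $V(y)-V(x)\ge y-x-c$ with $y\ge x$, I fix an admissible $(D,R)$ from $x$ and prepend an immediate lump dividend of size $y-x$ at time $0$ — permissible in this framework, cf. the $(z_1,z_2)$ strategy started from $x\ge z_2$ in \eqref{D.repre.} — thereafter following $(D,R)$. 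The prepended payment contributes net $(y-x)-c$ at discount $\mathrm{e}^{-q\cdot 0}=1$, while the continuation contributes exactly $V_{(D,R)}(x)$, so the constructed strategy from $y$ has value $(y-x-c)+V_{(D,R)}(x)$; taking the supremum over $(D,R)$ gives the claim.

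For the bound $V^{\prime}(x)\le\phi$ I would prove the equivalent estimate $V(y)-V(x)\le\phi(y-x)$ for $y\ge x$. Take a near-optimal $(D,R)$ from $y$ with reserve $U^{(y)}=X^{(y)}-D+R\ge0$, copy the dividends $D$, and, starting from $x=y-\delta$ (so $X^{(x)}=X^{(y)}-\delta$), augment the injection by the reflecting term $A(t)=\sup_{0\le s\le t}\bigl(\delta-U^{(y)}(s)\bigr)^{+}$. Since $U^{(y)}(0)=y\ge\delta$, $A$ starts at $0$, is nondecreasing, and satisfies $A(t)\le\delta$; the reserve under $(D,R+A)$ equals $U^{(y)}-\delta+A\ge0$, so the pair is admissible from $x$. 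Its value is $V_{(D,R)}(y)-\phi\,\mathrm{E}_x\!\int_0^\infty \mathrm{e}^{-qt}\mathrm{d}A(t)\ge V_{(D,R)}(y)-\phi\delta$, because $\int_0^\infty \mathrm{e}^{-qt}\mathrm{d}A\le A(\infty)\le\delta$. Taking the supremum yields $V(x)\ge V(y)-\phi(y-x)$, equivalently $\frac{V(y)-V(x)}{y-x}\le\phi$, which is the content of $V^{\prime}(x)\le\phi$ (and holds at every point of differentiability).

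Continuity I would obtain by pairing the last estimate with monotonicity $V(y)\ge V(x)$ for $y\ge x$. The latter follows by transferring any admissible $(D,R)$ from $x$ to $y$ using the same dividends $D$ and the smaller injection $R^{(y)}=(R-(y-x))^{+}$: this keeps the reserve nonnegative (it agrees with $U^{(x)}$ where $R\ge y-x$ and is strictly positive otherwise) and can only decrease the discounted injection cost, so $V(y)\ge V_{(D,R^{(y)})}(y)\ge V_{(D,R)}(x)$, and the supremum gives $V(y)\ge V(x)$. Combining with $V(y)-V(x)\le\phi(y-x)$ produces $0\le V(y)-V(x)\le\phi(y-x)$, so $V$ is nondecreasing and $\phi$-Lipschitz, hence continuous on $[0,\infty)$.

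The main obstacle is not conceptual but the careful bookkeeping of these modifications. One must check that each altered pair remains \emph{admissible} — the reserve nonnegative for all $t\ge0$ and $\int_0^\infty \mathrm{e}^{-qt}\mathrm{d}R<\infty$ — that the prepended time-$0$ dividend and the running-supremum injection $A$ are legitimate processes carrying the one-sided regularity required of $D$ and $R$, and, most delicately, that the coupling truly carries an $\mathcal{F}$-adapted strategy from one initial level to the other so that the two dividend streams share the same law and the pathwise injection comparisons are valid. Once these measurability and admissibility points are settled, the three inequalities follow exactly as above.
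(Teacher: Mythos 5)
Your proof is correct, and while it coincides with the paper's on one of the three claims, it takes a genuinely different and in fact lighter route on continuity. The gain inequality $V(y)-V(x)\geq y-x-c$ is proved exactly as in the paper: prepend a time-$0$ lump dividend of size $y-x$ to an $\varepsilon$-optimal strategy from $x$. For the bound $V(y)-V(x)\leq\phi(y-x)$, the paper simply injects the lump amount $x-y$ at time $0$ into the reserve started at the lower level and then follows a near-optimal strategy from the higher level, costing exactly $\phi(x-y)$; your running-supremum injection $A(t)=\sup_{0\leq s\leq t}\bigl(\delta-U^{(y)}(s)\bigr)^{+}$ achieves the same bound with a delayed, only-as-needed injection whose discounted cost is at most $\phi\delta$ — slightly sharper, a bit more bookkeeping, but entirely valid (adaptedness, monotonicity, $A\leq\delta$, and $U^{(y)}-\delta+A\geq0$ all check out). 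The real divergence is continuity: the paper's proof is its heaviest step, constructing from $x<y$ a strategy that reflects at $0$ until the reflected process first passes $y$ at time $T_{y}^{+}$ and then runs an $\varepsilon$-optimal continuation, and then estimating via the exit identity \eqref{two.side.exit.}, $\mathrm{E}_x(\mathrm{e}^{-qT_y^+})=Z^{(q)}(x)/Z^{(q)}(y)$, together with the expected discounted injection formula from \cite{Avram07} (the $C(\phi,\psi)$ term), letting $\varepsilon\downarrow 0$ and $y\downarrow x$. You bypass all of this fluctuation-theoretic machinery: monotonicity by strategy transfer (your $R^{(y)}=(R-(y-x))^{+}$ works, though the paper's even simpler observation — the unmodified $(D,R)$ is already admissible from $y$ with the same value — suffices) combined with the $\phi$-Lipschitz bound yields $0\leq V(y)-V(x)\leq\phi(y-x)$ directly. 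Your route buys a stronger conclusion (global $\phi$-Lipschitz continuity rather than bare continuity) with elementary comparisons only, and it exposes a redundancy in the paper: since the paper proves the same Lipschitz estimate at the end of its own proof, its scale-function continuity argument is dispensable. What the paper's construction buys instead is a self-contained quantitative modulus in terms of $Z^{(q)}$ that does not presuppose the injection-comparison step, but given the order in which you arrange the three claims, nothing in your argument is circular and no gap remains beyond the admissibility/adaptedness bookkeeping you correctly flag.
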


\begin{proof}\quad
By definition, any admissible IDCI strategy associated with the initial reserve $x\geq0$ also serves as an admissible IDCI strategy associated with the initial reserve $y\geq x$. Then it follows from \eqref{optimal function} that $V$ is non-decreasing.
For any $ \varepsilon>0$ and $y>x$, one can find an admissible IDCI strategy $(D_{y}^{ \varepsilon},R_{y}^{ \varepsilon})$ such that
   \begin{equation}\label{33.v1}
   V(y)\leq V_{(D_{y}^{ \varepsilon},R_{y}^{ \varepsilon})}(y)+ \varepsilon,
   \end{equation}
   where $R_{y}^{ \varepsilon}(t)\geq-\inf_{0\leq s\leq t}\left(X(s)-D_{y}^{ \varepsilon}(s)\right)\wedge 0$ because the latter is the minimum amount of capital injection needed to keep the reserve (applying dividend strategy $D_{y}^{ \varepsilon}$) being non-negative.

Define the admissible IDCI strategy
   \begin{eqnarray}
\hspace{-0.7cm}\overline{D}_{x}^{ \varepsilon}(t)&=&0,
   \quad\overline{R}_{x}^{ \varepsilon}(t)=-\inf_{0\leq s\leq t}X(s)\wedge 0,\quad t\in[0,T_{y}^{+}),
   \nonumber\\
   \overline{D}_{x}^{ \varepsilon}(t)&=&\left(D_{y}^{ \varepsilon}\circ\theta_{T_{y}^{+}}\right)(t-T_{y}^{+})
   ,\quad t\geq T_{y}^{+},
   \nonumber\\
   \overline{R}_{x}^{ \varepsilon}(t)
   &=&\overline{R}_{x}^{ \varepsilon}(T_{y}^{+}-)-\inf_{0\leq s\leq t-T_{y}^{+}}\left(\left(X-D_{y}^{ \varepsilon}\right)\circ\theta_{T_{y}^{+}}\right)(s)\wedge 0
   \nonumber\\
   &\leq&\overline{R}_{x}^{ \varepsilon}(T_{y}^{+}-)+\left(R_{y}^{ \varepsilon}\circ\theta_{T_{y}^{+}}\right)(t-T_{y}^{+}),\quad t\geq T_{y}^{+},
   \nonumber
   \end{eqnarray}
 where $\theta_{\cdot}$ is the time-shift operator. Then,
   $\left(\overline{D}_{x}^{ \varepsilon},\overline{R}_{x}^{ \varepsilon}\right)$ is indeed an admissible IDCI strategy associated with the initial reserve $x$. Denote
   \begin{eqnarray}
   &&C(\phi,\psi)=\phi\left(-\left(\overline{Z}^{(q)}(x)+\frac{\psi^{\prime}(0+)}{q}\right)+\left(\overline{Z}^{(q)}(y)+\frac{\psi^{\prime}(0+)}{q}\right)\frac{Z^{(q)}(x)}{Z^{(q)}(y)}
   \right),\nonumber\\
   &&\Delta\overline{D}_{x}^{ \varepsilon}(t)=\overline{D}_{x}^{ \varepsilon}(t+)-\overline{D}_{x}^{ \varepsilon}(t),\,\, \Delta D_{y}^{ \varepsilon}(t)=D_{y}^{ \varepsilon}(t+)-D_{y}^{ \varepsilon}(t),
   \nonumber\\
   &&\overline{G}_{x}^{ \varepsilon}(t)=\sum_{s\leq t}[\Delta\overline{D}_{x}^{ \varepsilon}(s)-c\mathbf{1}_{\{\Delta\overline{D}_{x}^{ \varepsilon}(s)>0\}}],\,\, G_{y}^{ \varepsilon}(t)=\sum_{s\leq t}[\Delta D_{y}^{ \varepsilon}(s)-c\mathbf{1}_{\{\Delta D_{y}^{ \varepsilon}(s)>0\}}].
   \nonumber
   \end{eqnarray}
By \eqref{two.side.exit.} as well as (4.4) in \cite{Avram07}, we have
$$\mathrm{E}_x\left(\int_{0}^{T_{y}^{+}}\mathrm{e}^{-qt}\mathrm{d}\overline{R}_{x}^{ \varepsilon}(t)\right)=\frac{C(\phi,\psi)}{\phi},$$
which, combined with \eqref{two.side.exit.} and \eqref{33.v1}, yields
   \begin{eqnarray}\label{34.v1}
   &&V(x)\ge V_{(\overline{D}_{x}^{ \varepsilon},\overline{R}_{x}^{ \varepsilon})}(x)   \nonumber\\
   &=&\mathrm{E}_x\left(\int_{T_{y}^{+}}^{\infty}\mathrm{e}^{-qt}\mathrm{d}\overline{G}_{x}^{ \varepsilon}(t)-\phi\int_{T_{y}^{+}}^{\infty}\mathrm{e}^{-qt}\mathrm{d}\overline{R}_{x}^{ \varepsilon}(t)\right)
   -\phi \mathrm{E}_x\left(\int_{0}^{T_{y}^{+}}\mathrm{e}^{-qt}\mathrm{d}\overline{R}_{x}^{ \varepsilon}(t)\right)\nonumber\\
   &\geq&\mathrm{E}_x\left[\mathrm{E}_x\left[\mathrm{e}^{-qT_{y}^{+}}\int_{T_{y}^{+}}^{\infty}\mathrm{e}^{-q(t-T_{y}^{+})}
   \mathrm{d}(G_{y}^{ \varepsilon}-\phi R_{y}^{ \varepsilon})\circ\theta_{T_{y}^{+}}(t-T_{y}^{+})\bigg|\mathcal{F}_{T_{y}^{+}}\right]\right]- C(\phi,\psi)
   \nonumber\\
   &=&\mathrm{E}_x\left(\mathrm{e}^{-qT_{y}^{+}}\right)V_{(D_{y}^{ \varepsilon},R_{y}^{ \varepsilon})}(y)  - C(\phi,\psi)=\frac{Z^{(q)}(x)} {Z^{(q)}(y)}V_{(D_{y}^{ \varepsilon},R_{y}^{ \varepsilon})}(y)- C(\phi,\psi)\nonumber\\
   &\ge&\frac{Z^{(q)}(x)} {Z^{(q)}(y)}[V(y)- \varepsilon] - C(\phi,\psi).
   \end{eqnarray}
   Owing to the non-decreasing property of $V(x)$, it follows from \eqref{34.v1} that
  $$
   0\leq V(y)-V(x)\le \left(1-\frac{Z^{(q)}(x)} {Z^{(q)}(y)}\right)V(y)+ \varepsilon + C(\phi,\psi).
 $$
  By setting $ \varepsilon\downarrow0$ and then $y\downarrow x$ ($x\uparrow y$, respectively) in the above we reach  continuity of $V(x)$.

For any $\varepsilon>0$ and $y\geq x\geq0$, denote $(D_{x}^{\varepsilon},R_{x}^{\varepsilon})$ an admissible IDCI strategy associated with the initial reserve $x$ such that $V_{(D_{x}^{\varepsilon},R_{x}^{\varepsilon})}(x)> V(x)-\varepsilon$.
Without loss of generality, $D_{x}^{\varepsilon}$ is expressed as
$$
\left(\tau_n^{D_{x}^{\varepsilon}},\eta_{n}^{D_{x}^{\varepsilon}}\right),\quad n=1,2,\cdots,$$
where $\tau_{n}^{D_{x}^{\varepsilon}}$ and $\eta_{n}^{D_{x}^{\varepsilon}}$ are respectively the $n$-th time and amount of dividend payout, and $\tau_{1}^{D_{x}^{\varepsilon}}>0$ a.s. .
Define a new admissible strategy $(D_{y}^{\varepsilon},R_{y}^{\varepsilon})$ associated with the initial reserve $y$ such that $R_{y}^{\varepsilon}=R_{x}^{\varepsilon}$ and $D_{y}^{\varepsilon}$ are characterized as
$$
\left(0,\tau_{1}^{D_{x}^{\varepsilon}},\tau_{2}^{D_{x}^{\varepsilon}},\cdots,\tau_{n}^{D_{x}^{\varepsilon}},\cdots;\,y- x,\eta_{1}^{D_{x}^{\varepsilon}},\eta_{2}^{D_{x}^{\varepsilon}},\cdots,\eta_{n}^{D_{x}^{\varepsilon}},\cdots\right).
$$
According to $(D_{y}^{\varepsilon},R_{y}^{\varepsilon})$, we have
$$
V(y)\ge V_{(D_{y}^{\varepsilon},R_{y}^{\varepsilon})}(y)
=y-x-c+V_{(D_{x}^{\varepsilon},R_{x}^{\varepsilon})}(x)>y-x-c+V(x)-\varepsilon,
$$
which yields $V(y)-V(x)\geq y-x-c$ after setting $\varepsilon\downarrow0$.

The inequality $V^{\prime}(x)\leq\phi$ over $[0,\infty)$ can be proved if we have
$$V(x)-V(y)\leq \phi (x-y),\quad x,y\in[0,\infty),$$
which, in the case of $x>y$ (proof for the case of $x<y$ is quite similar), can be accomplished by considering an IDCI strategy that injects a capital of amount $x-y$ at time $0$ to the reserve process starting from $y$.
\qed
\end{proof}

Put $\Delta D(t)=D(t+)-D(t)$, $\Delta X(t)=X(t)-X(t-)$, and $\Delta R(t)=R(t)-R(t-)$. Define
\begin{eqnarray}
\mathcal{D}_{1}&=&\{(D,R)\in\mathcal{D}; \,\Delta D(t)>0
\mathrm{\, \,\,iff\,\,\,}
c<\Delta D(t)\leq U(t-)+\Delta X(t),
\nonumber\\ &&
\,\,\,\,\mathrm{ and\,\,} R(t)=-\inf\limits_{0\leq s\leq t}\left(X(s)-D(s)\right)\wedge 0,\,\,t\geq0\},\nonumber
\end{eqnarray}
which is a proper subset of $\mathcal{D}$. Intuitively, the condition
$$c<\Delta D(t)\leq U(t-)+\Delta X(t),$$
requires that the lump sum dividend paid at time $t$ is strictly greater than $c$ and is no more than the available reserve after covering the down-ward jump of $X$ at time $t$, i.e., $U(t-)+\Delta X(t)$. For $(D,R)\in\mathcal{D}_{1}$, it is seen that $\Delta R(t)=0$ whenever $\Delta D(t)>0$.

The following result tells us that we can confine ourselves within $\mathcal{D}_{1}$ when searching for the optimal IDCI strategy among $\mathcal{D}$. This finding is used in the proof of the verification Lemma \ref{Lem.1}.

\begin{lem}\label{comf.}
The optimal IDCI strategy lies in $\mathcal{D}_{1}$.
\end{lem}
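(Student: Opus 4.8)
The plan is to show that any admissible IDCI strategy $(D,R)\in\mathcal{D}$ can be modified into a strategy lying in $\mathcal{D}_1$ without decreasing the value function, so that the supremum defining $V$ is already attained over $\mathcal{D}_1$. The definition of $\mathcal{D}_1$ imposes three structural requirements: (i) each dividend lump sum exceeds the transaction cost $c$; (ii) each lump sum is no larger than the reserve available after the incoming jump of $X$, i.e.\ $\Delta D(t)\le U(t-)+\Delta X(t)$; and (iii) the capital injection process is \emph{exactly} the minimal reflecting process $R(t)=-\inf_{0\le s\le t}(X(s)-D(s))\wedge 0$. I would establish that each of these three can be enforced one at a time while only improving (or preserving) the expected discounted reward.

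\medskip

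\textbf{Step 1 (minimality of capital injection).} First I would argue that replacing any admissible $R$ by the minimal reflecting injection $\widehat{R}(t)=-\inf_{0\le s\le t}(X(s)-D(s))\wedge 0$ (with the same $D$) never decreases the value. Since $(D,R)$ is admissible we have $U(t)=X(t)-D(t)+R(t)\ge 0$, forcing $R(t)\ge \widehat{R}(t)$ pointwise; moreover $\widehat{R}$ itself keeps the reserve non-negative, so $(D,\widehat{R})$ is admissible. Because $\phi>0$ and $\mathrm{d}R\ge \mathrm{d}\widehat{R}$ as measures, we get $\int_0^\infty \mathrm{e}^{-qt}\mathrm{d}R(t)\ge \int_0^\infty \mathrm{e}^{-qt}\mathrm{d}\widehat{R}(t)$, so the capital-injection penalty strictly decreases (or stays equal) while the dividend reward is unchanged. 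Hence $V_{(D,\widehat{R})}(x)\ge V_{(D,R)}(x)$, and we may assume (iii) holds. This is exactly the intuition flagged after \eqref{optimal function}: inject as little and as late as possible.

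\medskip

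\textbf{Step 2 (each lump sum exceeds $c$).} With $R=\widehat{R}$ fixed, I would show any dividend jump of size $\Delta D(t)\le c$ may be suppressed without loss. Such a jump contributes $\mathrm{e}^{-q\tau}(\Delta D(t)-c)\le 0$ to the reward while forcing (via the reflection in Step 1) an equal or larger future injection cost, since lowering the reserve by $\Delta D(t)$ can only increase $\widehat{R}$. Removing it therefore does not decrease the objective. A mild measurability/adaptedness check confirms the modified $D$ is still a valid impulse dividend process, giving requirement (i).

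\medskip

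\textbf{Step 3 (feasibility bound on each lump sum).} Finally I would enforce $\Delta D(t)\le U(t-)+\Delta X(t)$. Admissibility already gives $U(t)=U(t-)+\Delta X(t)-\Delta D(t)+\Delta R(t)\ge0$; under the minimal injection of Step 1, a lump sum overshooting $U(t-)+\Delta X(t)$ would drive $X(t)-D(t)$ strictly below its running infimum and trigger a \emph{simultaneous} reflecting injection $\Delta R(t)>0$. Paying out a dividend and immediately clawing back capital at cost $\phi>1$ per unit is strictly wasteful: capping $\Delta D(t)$ at $U(t-)+\Delta X(t)$ removes the offsetting injection, changing the net contribution from $(\Delta D-c)-\phi\,\Delta R$ to $(\Delta D_{\mathrm{capped}}-c)$, which is larger because $\phi>1$. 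This simultaneously yields requirement (ii) and the statement ``$\Delta R(t)=0$ whenever $\Delta D(t)>0$'' noted after the definition of $\mathcal{D}_1$. Combining Steps 1--3 shows the modified strategy lies in $\mathcal{D}_1$ with value at least $V_{(D,R)}(x)$; taking suprema gives that the optimal IDCI strategy lies in $\mathcal{D}_1$.

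\medskip

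\textbf{The main obstacle} I anticipate is the careful bookkeeping in Step 3: the reflection $\widehat{R}$ depends on the entire path of $X-D$, so a local modification of $D$ at one jump time $t$ propagates through the infimum and alters $\widehat{R}$ at \emph{all} later times. I would need to verify that capping a single overshooting jump only \emph{decreases} the subsequent reflecting injection path (a monotonicity argument paralleling the one used around \eqref{rule.out.opti.on.the.line.} in Proposition~\ref{Pro.2}), and that performing these modifications jointly over all jump times -- possibly countably many -- still yields an admissible, adapted, $\mathcal{F}$-measurable strategy whose value dominates the original.
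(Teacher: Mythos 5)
Your overall route coincides with the paper's: dominate an arbitrary admissible $(D,R)$ by a modified strategy in $\mathcal{D}_{1}$, and the three economic effects you isolate (excess injection beyond the minimal reflection, lump sums of size $\leq c$, overshooting lump sums clawed back at unit cost $\phi>1$) correspond exactly to the three families of non-positive terms in the paper's final estimate for $V_{(D,R)}(x)-V_{(\overline{D},\overline{R})}(x)$. Steps 1 and 2 are sound, with one repair: pointwise domination $R(t)\geq\widehat{R}(t)$ does \emph{not} give $\mathrm{d}R\geq\mathrm{d}\widehat{R}$ as measures, since $R-\widehat{R}$ need not be non-decreasing; the discounted comparison instead follows from the Fubini identity $\int_{[0,\infty)}\mathrm{e}^{-qt}\,\mathrm{d}A(t)=q\int_{0}^{\infty}\mathrm{e}^{-qt}A(t)\,\mathrm{d}t$ for non-decreasing $A$ with $A(0-)=0$, which is monotone in the cumulative process. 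Both the paper and you need this device.

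The genuine gap sits in Step 3 and in the ordering of your steps. First, capping an overshooting jump at the available reserve can produce a lump sum of size in $(0,c]$ (namely when $0<U(t-)+\Delta X(t)\leq c<\Delta D(t)$), so after your Steps 2--3 the strategy need not lie in $\mathcal{D}_{1}$; re-running Step 2 then changes the reserve path and hence which later jumps overshoot, and you would need an iteration with a termination or limit argument over countably many jump times. The paper avoids this circularity by applying cap and threshold \emph{jointly} to the original controlled path in a single formula, $\Delta\overline{D}(s)=\left(\Delta D(s)\wedge\left(U(s-)+\Delta X(s)\right)\right)\mathbf{1}_{\{\Delta D(s)\wedge(U(s-)+\Delta X(s))>c\}}$, with $\overline{R}$ the minimal reflection for $\overline{D}$. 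Second, the propagation problem you flag at the end is indeed the crux, and your proposal leaves it open; note, though, that for a pure overshoot with available reserve $>c$ it is actually a non-issue, because under minimal injection the post-jump reserve equals $0$ both before and after capping, so the two paths re-coincide immediately and that surgery is purely local. Propagation genuinely arises only for the full suppressions (your Step 2 and the case $U(t-)+\Delta X(t)\leq c$), and there the paper replaces your jump-by-jump monotonicity induction with one global pathwise inequality, \eqref{ineq.ho.str.} (derived via \eqref{vit.eq.}): the minimal injection $-\inf_{0\leq s\leq t}\left(X(s)-D(s)\right)\wedge 0$ for the original dividends dominates $\overline{R}(t)$ plus the accumulated overshoot amounts, and is itself dominated by $R(t)$; discounting this settles the value comparison in one stroke, and with \emph{strict} inequality for $(D,R)\notin\mathcal{D}_{1}$. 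Your weak inequality would suffice for how the lemma is invoked in Lemma \ref{Lem.1}, but the statement as phrased asserts membership of the optimizer in $\mathcal{D}_{1}$, which requires strictness. To complete your plan you must either prove a version of \eqref{ineq.ho.str.} or carry out, in full, the countable modification scheme you only sketch, including adaptedness of the modified processes.
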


\begin{proof}\quad
For any $(D,R)\in\mathcal{D}\setminus \mathcal{D}_{1}$, one needs to find a $(\overline{D},\overline{R})\in\mathcal{D}_{1}$ such that
$V_{(D,R)}(x)< V_{(\overline{D},\overline{R})}(x),\,\,x\in[0,\infty)$.
For this purpose, let
\begin{eqnarray}
\overline{D}(t)&=&\sum_{s\in[0,t)}(\Delta D(s)\wedge (U(s-)+\Delta X(s)))
\mathbf{1}_{\{\Delta D(s)\wedge (U(s-)+\Delta X(s))>c\}},\quad t\geq0,\nonumber\\
\overline{R}(t)&=&-\inf\limits_{0\leq s\leq t}\left(X(s)-\overline{D}(s)\right)\wedge 0,\quad t\geq0,\nonumber
\end{eqnarray}
then one can see that $(\overline{D},\overline{R})\in\mathcal{D}_{1}$ and
\begin{eqnarray}\label{vit.eq.}
-\inf\limits_{0\leq s\leq t}\left(X(s)-D(s)\right)\wedge 0=\overline{R}(t)+D(t+)-\overline{D}(t+),\quad t\geq0.
\end{eqnarray}
Indeed, by the definition of $\overline{R}$, we have
$$X(t)-D(t)+\overline{R}(t)+D(t)-\overline{D}(t)
=X(t)+\overline{R}(t)-\overline{D}(t)\geq 0,\quad t\geq 0,$$
which implies
\begin{eqnarray}\label{EQALI}
\overline{R}(t)+D(t)-\overline{D}(t)\geq -\inf\limits_{0\leq s\leq t}\left(X(s)-D(s)\right)\wedge 0,\quad t\geq 0.
\end{eqnarray}
At the same time, one has
\begin{eqnarray}
&&X(t)-\overline{D}(t)+\overline{R}(t)-\left(\overline{R}(t)+D(t)-\overline{D}(t)+\inf\limits_{0\leq s\leq t_{0}}\left(X(s)-D(s)\right)\wedge 0\right)
\nonumber\\
&=&X(t)-D(t)-\inf\limits_{0\leq s\leq t}\left(X(s)-D(s)\right)\wedge 0
\geq 0, \quad t\geq0.
\nonumber
\end{eqnarray}
It implies that
\begin{eqnarray}
&&\overline{R}(t)-\left(\overline{R}(t)+D(t)-\overline{D}(t)+\inf\limits_{0\leq s\leq t}\left(X(s)-D(s)\right)\wedge 0\right)
\geq\overline{R}(t), \quad t\geq0,
\nonumber
\end{eqnarray}
which combined with \eqref{EQALI} and a choice of c\`{a}dl\`{a}g version gives \eqref{vit.eq.}.
By \eqref{vit.eq.} we have
\begin{eqnarray}
\label{ineq.ho.str.}
&&\overline{R}(t)
+
\sum_{s\in[0,t]}\left(\Delta D(s)-(U(s-)+\Delta X(s))\right)
\mathbf{1}_{\{\Delta D(s)>U(s-)+\Delta X(s)>c\}}
\nonumber\\
&&+\sum_{s\in[0,t]}\left(\Delta D(s)-\left(U(s-)+\Delta X(s)\right)\vee 0\right)
\mathbf{1}_{\{\Delta D(s)>c\geq U(s-)+\Delta X(s)\}}
\nonumber\\
&\leq&-\inf_{0\leq s\leq t}\left(X(s)-D(s)\right)\wedge 0\leq R(t),\quad t\geq0.
\end{eqnarray}
Furthermore, there is a $t_{0}\in(0,\infty)$ such that
$\overline{D}(t_{0})< D(t_{0})$ or
at least one inequality in \eqref{ineq.ho.str.} is a strict inequality at $t=t_{0}$. Otherwise, $(D,R)=(\overline{D},\overline{R})\in\mathcal{D}_{1}$, contradicting the fact that $(D,R)\notin\mathcal{D}_{1}$.
The above arguments combined with \eqref{vit.eq.} and the definition of $V_{(D,R)}$ yield
\begin{eqnarray}
&&
V_{(D,R)}(x)-V_{(\overline{D},\overline{R})}(x)
\nonumber\\
&\leq&
\mathrm{E}_{x}\left[\sum_{t\geq0} \mathrm{e}^{-qt}
\left(1-\phi\right)\left(\Delta D(t)-(U(t-)+\Delta X(t))\right)
\mathbf{1}_{\{\Delta D(t)> U(t-)+\Delta X(t)> c\}}
\right.
\nonumber\\
&&
+\sum_{t\geq0} \mathrm{e}^{-qt}
\left(1-\phi\right)\left(\Delta D(t)-\left(U(t-)+\Delta X(t)\right)\vee 0\right)
\mathbf{1}_{\{U(t-)+\Delta X(t)\leq c<\Delta D(t)\}}
\nonumber\\
&&
+\sum_{t\geq0} \mathrm{e}^{-qt}\left(\left(U(t-)+\Delta X(t)\right)\vee 0-c\right)
\mathbf{1}_{\{U(t-)+\Delta X(t)\leq c<\Delta D(t)\}}
\nonumber\\
&&
+\sum_{t\geq0} \mathrm{e}^{-qt}
\left(\Delta D(t)-c\right)
\mathbf{1}_{\{\Delta D(t)\leq c\}}
\nonumber\\
&&
-\phi \int_{0}^{\infty}\mathrm{e}^{-qt}\mathrm{d}\left(R(t)+
\inf_{0\leq s\leq t}\left(X(s)-D(s)\right)\wedge 0\right)\Bigg]
<0,
\nonumber
\end{eqnarray}
which completes the proof.
\qed
\end{proof}

As pointed out in Remark \ref{rem1rem1},
even if the continuous differentiability over $[0,\infty)$ is assumed on $W^{(q)}$, the twice differentiability of $V_{z_{1}}^{z_{2}}$ at $z_{2}$ is still absent in general, as is the continuity of $[V_{z_{1}}^{z_{2}}]^{\prime\prime}$ at $z_{2}$.
Furthermore, imposing on $W^{(q)}$ the assumption of continuous differentiability over $[0,\infty)$ will exclude important sub-classes of spectrally negative L\'{e}vy processes. For example, for a spectrally negative compound Poisson process which has jumps of exact size
$\alpha\in(0,\infty)$, the arrival rate $\lambda >0$, and a positive drift $\beta > 0$ such that $\beta-\lambda \alpha> 0$, the corresponding 0-scale function is identified by \cite{Asmussen00} and \cite{Hubalek11} as
$$W(x)=\frac{1}{\beta}\sum_{n=1}^{[x/\alpha]}\mathrm{e}^{-\lambda(\alpha n-x)/\beta}\frac{1}{n!}(\lambda/\beta)^{n}(\alpha n-x)^{n},$$
with $[x/\alpha]$ being the integer part of $x/\alpha$.
Note that the above example of scale function corresponds to a L\'evy process that has sample paths of bounded variation and whose L\'evy measure has atoms; otherwise, the scale function should be continuously
differentiable over $(0,\infty)$.

In the sequel, it is assumed that $W^{(q)}$ is piece-wise continuously differentiable over  all compact subsets of $[0,\infty)$, i.e., for any $a\in(0,\infty)$, $W^{(q)}$ is continuously differentiable over $[0,a]\setminus (d_{i}^{a})_{i\leq m_{a}}$, with $m_{a}$ being a non-negative integer. Hence, recalling that $V_{z_{1}}^{z_{2}}(x)$ is linear over $[z_{2},\infty)$, one knows that $V_{z_{1}}^{z_{2}}(x)$ is twice continuously differentiable over $[0,\infty)\setminus (d_{i})_{i\leq m}$ with $(d_{i})_{i\leq m}\subseteq [0,z_{2}]$ and $m$ being an integer.

For any function $f\in C^{2}((-\infty,\infty)\setminus (d_{i})_{i\leq m})$ for some integer $m\geq0$, define an operator $\mathcal{A}$ acting on $f$ as
$$
\mathcal{A}f(x)=\gamma f^{\prime}(x)+\frac{1}{2}\sigma^{2}f^{\prime\prime}(x)
+\int_{(0,\infty)}\left(f(x-y)-f(x)+f^{\prime}(x) y\mathbf{1}_{(0,1)}(y)\right)\upsilon(\mathrm{d}y),
$$
for $x\in(-\infty,\infty)\setminus (d_{i})_{i\leq m}$.
Also, define a sequence of mollified functions $f_{n}$ (of $f$) as
\begin{eqnarray}\label{Def.of.fn.0}
\hspace{-0.5cm}
f_{n}(x)&\hat{=}&
\int_{-\infty}^{+\infty}\rho_{n}(x-y)f(y)\mathrm{d}y\nonumber\\
&=&\int_{-\infty}^{+\infty}\rho(z)f(x-\tfrac{z}{n})
\mathrm{d}z,\quad x\in(-\infty,\infty),\,n\geq 1,
\end{eqnarray}
where $\rho_{n}(x)=n\rho(nx)$ and $\rho(x)=c\,\mathrm{e}^{\frac{1}{(x-1)^{2}-1}}\,\mathbf{1}_{(0,2)}(x)$ with  $\int_{-\infty}^{+\infty}\rho(x)\mathrm{d}x=1$.

In order to verify the optimality of a particular IDCI strategy $(z_{1},z_{2})\in\mathcal{M}$ producing the value function $V_{z_{1}}^{z_{2}}$, which lacks twice continuous differentiability at finitely many points $(d_{i})_{i\leq m}$, we need a modified version of verification argument, i.e., Lemma \ref{Lem.1}. Before presenting this verification argument, we show the following Lemma \ref{Lem.1a}.

\begin{lem}
\label{Lem.1a}
Let $f$ be a function such that
%
\begin{eqnarray}\label{unif.boun.s.d.f.0}
f\in C(-\infty,\infty)\cap C^{2}((-\infty,\infty)\setminus(d_{i})_{i\leq m}),
\end{eqnarray}
and
\begin{eqnarray}\label{unif.boun.s.d.f.1}
\max\limits_{i\leq m}\left(\lim\limits_{x\uparrow d_{i}}\big|f^{\prime\prime}(x)\big|\vee\lim\limits_{x\downarrow d_{i}}\big|f^{\prime\prime}(x)\big|\right)<\infty,
\end{eqnarray}
with $(d_{i})_{i\leq m}\subseteq (-\infty,\infty)$ and $m$ being a non-negative integer.
Suppose that
\begin{eqnarray}
f(x_{2})-f(x_{1})\geq x_{2}-x_{1}-c,\,\, f^{\prime}(x)\leq \phi,
\quad  x_2\ge x_1+c,\,x_{1},x\geq0,\label{Optimality conditionplus.0.0}
\end{eqnarray}
and
\begin{equation}
\mathcal{A}f(x)-q f(x)\leq 0,\quad x\in[0,\infty)\setminus(d_{i})_{i\leq m},\label{Optimality condition.0}
\end{equation}
then $(f_{n})_{n\geq1}$ defined through \eqref{Def.of.fn.0} are twice differentiable over $(-\infty,\infty)$,  satisfy \eqref{Optimality conditionplus.0.0} and
\begin{equation}
\mathcal{A}f_{n}(x)-q f_{n}(x)\leq 0,\quad x\in[0,\infty).\label{Optimality condition.0r}
\end{equation}
and
\begin{equation}\label{38.v4}
\lim\limits_{n\to\infty}f_{n}(x)=f(x),\quad x\in (-\infty,\infty).
\end{equation}
\end{lem}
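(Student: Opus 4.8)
The plan is to realize $f_{n}$ as the convolution $\rho_{n}\ast f$ and push each of the four assertions through the mollification, the only genuinely delicate points being the commutation of the integro-differential operator $\mathcal{A}$ with the convolution and the behaviour in the boundary layer near the origin. The smoothness and the convergence are the routine parts: since $\rho\in C^{\infty}$ has compact support and $f$ is continuous, hence locally integrable, $f_{n}=\rho_{n}\ast f$ is $C^{\infty}$ on $(-\infty,\infty)$ and in particular twice differentiable, with $f_{n}^{(k)}=\rho_{n}^{(k)}\ast f$. Because $f$ is locally Lipschitz (by \eqref{unif.boun.s.d.f.0} its one-sided derivatives are locally bounded) and is $C^{1}$ off a null set, one gets the averaged representations $f_{n}'(x)=\int_{0}^{2}\rho(z)f'(x-\tfrac{z}{n})\,dz$ and, where no corner is crossed, $f_{n}''(x)=\int_{0}^{2}\rho(z)f''(x-\tfrac{z}{n})\,dz$ (the finitely many points $(d_{i})_{i\le m}$ carry no mass in these integrals). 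For \eqref{38.v4}, the representation $f_{n}(x)=\int_{0}^{2}\rho(z)f(x-\tfrac{z}{n})\,dz$ together with continuity of $f$ and dominated convergence yields $f_{n}(x)\to f(x)\int_{0}^{2}\rho(z)\,dz=f(x)$ for every $x$.

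To propagate \eqref{Optimality conditionplus.0.0} I would again exploit the averaging representation: writing $f_{n}(x_{2})-f_{n}(x_{1})=\int_{0}^{2}\rho(z)\big(f(x_{2}-\tfrac{z}{n})-f(x_{1}-\tfrac{z}{n})\big)\,dz$ and using the formula for $f_{n}'$, both claims reduce to the corresponding pointwise inequalities at the shifted arguments $x_{i}-\tfrac{z}{n}$. For shifts that stay in $[0,\infty)$ these are exactly the hypotheses; for shifts dipping below $0$ one uses that below the origin $f$ is the affine, slope-$\phi$ continuation, so $f'\le\phi$ persists there and, since $\phi>1$ and $f$ is non-decreasing, the increment inequality persists as well. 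Averaging against $\rho$ then returns \eqref{Optimality conditionplus.0.0} for each $f_{n}$.

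The heart of the argument is \eqref{Optimality condition.0r}, and the engine is the commutation identity
\begin{equation}
\mathcal{A}f_{n}(x)-qf_{n}(x)=\int_{0}^{2}\rho(z)\big(\mathcal{A}f-qf\big)\big(x-\tfrac{z}{n}\big)\,dz,\nonumber
\end{equation}
which holds because $\mathcal{A}$ is built from $\partial_{x}$, $\partial_{x}^{2}$ and the space-homogeneous jump kernel $\upsilon$, hence is translation-invariant and commutes with the averaging of translates defining $f_{n}$ — once one may interchange the $z$-integral with the $\upsilon$-integral. Justifying this Fubini step is exactly where the two standing assumptions enter: near $y=0$ the second-order bound \eqref{unif.boun.s.d.f.1} gives $f(\xi-y)-f(\xi)+f'(\xi)y=O(y^{2})$ uniformly in $\xi$ over the compact shift-range, which is $\upsilon$-integrable since $\int_{(0,1)}y^{2}\,\upsilon(dy)<\infty$; for large $y$ the slope bound forces $|f|$ to grow at most linearly, and the assumption $\psi'(0+)>-\infty$, equivalent to $\int_{[1,\infty)}y\,\upsilon(dy)<\infty$, controls the tail. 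Granting the identity, every integrand with $x-\tfrac{z}{n}\ge 0$ is $\le 0$ by \eqref{Optimality condition.0}, so \eqref{Optimality condition.0r} is immediate for all $x\ge 2/n$.

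The hard part will be the boundary layer $x\in[0,2/n)$, where some shifts $x-\tfrac{z}{n}$ fall below $0$ and \eqref{Optimality condition.0} is unavailable; I expect this to be the crux. One cannot simply invoke the slope-$\phi$ continuation here, because a direct computation shows that the below-zero continuation need \emph{not} satisfy the HJB inequality on a left-neighbourhood of the origin, so the left-looking mollifier must be reconciled with the boundary condition at $0$. The resolution I would pursue is to estimate $\mathcal{A}f-qf$ directly on a one-sided neighbourhood $(-\delta,0)$, using its explicit left-limit together with the continuation of $f$, and then take $n$ large enough that $2/n<\delta$; alternatively one establishes \eqref{Optimality condition.0r} on $[2/n,\infty)$ and recovers the full range $[0,\infty)$ only in the limit $n\to\infty$ inside the verification Lemma \ref{Lem.1}, where \eqref{38.v4} and the shrinking of the layer make its contribution vanish.
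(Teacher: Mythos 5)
Your routine steps --- smoothness of $f_{n}$, pointwise convergence, propagation of \eqref{Optimality conditionplus.0.0}, and the translation-invariance/Fubini argument for commuting $\mathcal{A}$ with the mollification --- are exactly the content the paper compresses into its one-line proof (``a direct consequence of differentiation under the integral sign''), and they are correct, including your identification of $\int_{(0,1)}y^{2}\,\upsilon(\mathrm{d}y)<\infty$ and $\psi^{\prime}(0+)>-\infty$ as the ingredients legitimising the interchange. The real value of your write-up is that you noticed what the one-liner hides: with $\rho$ supported in $(0,2)$, the definition \eqref{Def.of.fn.0} averages $f$ over $(x-\tfrac{2}{n},x)$, i.e.\ to the \emph{left} of $x$, so for $x\in[0,\tfrac{2}{n})$ the commutation identity evaluates $(\mathcal{A}-q)f$ at points below $0$, where \eqref{Optimality condition.0} is not assumed. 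This boundary layer is a genuine gap in the lemma as printed, not merely in your attempt: for the function the lemma is applied to ($V_{z_{1}}^{z_{2}}$ with the affine slope-$\phi$ continuation) one computes, for $w<0$, that $(\mathcal{A}-q)f(w)=\phi\psi^{\prime}(0+)-qf(0)-q\phi w$, and since $f(0)=\phi\psi^{\prime}(0+)/q+\bigl(1-\phi Z^{(q)}(z_{2})\bigr)/\bigl(qW^{(q)}(z_{2})\bigr)$ with $\phi Z^{(q)}(z_{2})>1$, the left limit at $0$ equals $\bigl(\phi Z^{(q)}(z_{2})-1\bigr)/W^{(q)}(z_{2})>0$. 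Hence $(\mathcal{A}-q)f_{n}(0)>0$ for \emph{every} $n$, so \eqref{Optimality condition.0r} actually fails on $[0,\tfrac{2}{n})$, and your first proposed repair --- finding $\delta>0$ with the HJB inequality on $(-\delta,0)$ and taking $\tfrac{2}{n}<\delta$ --- cannot work, because no such $\delta$ exists.

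Your second fallback is the correct one and can be completed: prove \eqref{Optimality condition.0r} on $[\tfrac{2}{n},\infty)$ only, observe that on $[0,\tfrac{2}{n})$ the positive part of $(\mathcal{A}-q)f_{n}$ is bounded by a constant independent of $n$, and in the proof of Lemma \ref{Lem.1} absorb the resulting error term $\mathrm{E}_{x}\int_{0}^{t\wedge T_{N}}\mathrm{e}^{-qs}\,\mathbf{1}_{\{U(s)<2/n\}}\,C\,\mathrm{d}s$, which vanishes as $n\to\infty$ because the occupation sets decrease to $\{s;\,U(s)=0\}$, a Lebesgue-null set for the reflected process (recall $X$ is not the negative of a subordinator). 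Alternatively --- and this is almost certainly what the authors intended --- reverse the orientation of the mollifier, taking $\rho$ supported in $(-2,0)$, equivalently $f_{n}(x)=\int\rho(z)f(x+\tfrac{z}{n})\,\mathrm{d}z$: then every shifted argument lies in $(0,\infty)$ for $x\geq0$, all four conclusions including \eqref{Optimality condition.0r} on the whole of $[0,\infty)$ follow at once by averaging the hypotheses (the finitely many $d_{i}$ being null in $z$), and the bound $f_{n}^{\prime}\leq\phi$ still holds at negative arguments via the affine continuation, as needed for \eqref{f'phi}. One smaller remark: your propagation of the increment inequality through shifts dipping below $0$ invokes monotonicity of $f$, which is not among the lemma's hypotheses (it does hold for the intended $V_{z_{1}}^{z_{2}}$); with the right-oriented mollifier this issue disappears as well.
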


\begin{proof}\quad
It is a direct consequence of differentiation under
the integral sign. The proof is omitted. \qed
\end{proof}

Now we are ready to present the following verification argument. For this purpose, let $(D^*, R^*)$ be a candidate optimal admissible IDCI strategy with value function $V_{(D^*,R^*)}(x),\,\, x\in[0,\infty)$. We extend the domain of $V_{(D^*,R^*)}$ to the entire real line by setting $V_{(D^*,R^*)}(x) = V_{(D^*,R^*)}(0)+\phi x$ for $x<0$. With a little abuse of notation, the extended function is still denoted by $V_{(D^*,R^*)}$.

\begin{lem}[Verification]\label{Lem.1}
If the function $V_{(D^*,R^*)}$ defined over $(-\infty,\infty)$
fulfills \eqref{unif.boun.s.d.f.0}, \eqref{unif.boun.s.d.f.1}, \eqref{Optimality conditionplus.0.0},  and \eqref{Optimality condition.0},
then $(D^*, R^*)$ is the optimal strategy, and $V_{(D^*,R^*)}(x)=\sup_{(D,R)\in \mathcal{D}} V_{(D,R)}(x),\,\,x\in[0,\infty)$.
 \end{lem}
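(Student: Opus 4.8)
The plan is to establish the verification lemma through the standard route: show that the candidate value function $V_{(D^*,R^*)}$ dominates $V_{(D,R)}$ for every admissible strategy $(D,R)$, and since $V_{(D^*,R^*)}$ is itself the value of an admissible strategy, it must coincide with the supremum. By Lemma \ref{comf.}, I may restrict attention to strategies $(D,R)\in\mathcal{D}_1$, which simplifies the bookkeeping: on $\mathcal{D}_1$ dividend jumps and capital injections never occur simultaneously, and capital is injected only as the minimal reflecting amount. The key device will be the mollified functions $f_n$ supplied by Lemma \ref{Lem.1a}: since $V_{(D^*,R^*)}$ satisfies \eqref{unif.boun.s.d.f.0}, \eqref{unif.boun.s.d.f.1}, \eqref{Optimality conditionplus.0.0}, and \eqref{Optimality condition.0}, the sequence $(f_n)_{n\ge1}$ consists of genuinely $C^2$ functions on $(-\infty,\infty)$ that still satisfy the HJB-type inequality $\mathcal{A}f_n-qf_n\le0$ everywhere, the gradient bound $f_n'\le\phi$, the jump inequality $f_n(x_2)-f_n(x_1)\ge x_2-x_1-c$, and converge pointwise to $V_{(D^*,R^*)}$. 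The mollification is precisely what lets me apply Ito's formula cleanly, circumventing the failure of twice differentiability of $V_{(D^*,R^*)}$ at the finitely many points $(d_i)_{i\le m}$.

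The core computation is to apply the Ito--Doob--Meyer change-of-variable formula for semimartingales to the discounted process $\mathrm{e}^{-qt}f_n(U(t))$ for a fixed admissible $(D,R)\in\mathcal{D}_1$, where $U(t)=X(t)-D(t)+R(t)$. First I would decompose the dynamics into the continuous martingale part (Brownian), the compensated-jump martingale part coming from $X$, the compensator part which produces exactly $\mathcal{A}f_n(U(t))-qf_n(U(t))$ integrated against $\mathrm{d}t$, the absolutely continuous reflection term $\int f_n'(U(s))\,\mathrm{d}R^c(s)$ contributed by capital injections, and the sum over dividend jumps and the (negative) jumps of $X$. Using $\mathcal{A}f_n-qf_n\le0$ the $\mathrm{d}t$-drift integral is non-positive; using $f_n'\le\phi$ on the injection term bounds the reflection contribution by $-\phi\,\mathrm{d}R$ after carrying the sign; and using the jump inequality $f_n(U(t-)+\Delta X(t)-\Delta D(t))\le f_n(U(t-)+\Delta X(t))-(\Delta D(t)-c)$ (valid because $\Delta D(t)>c$ on $\mathcal{D}_1$) controls each dividend lump sum by $-(\eta_n^D-c)$. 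Taking expectations, the martingale terms vanish, so rearranging the inequality yields, for each finite horizon $t$,
\begin{equation}
f_n(x)\ge \mathrm{E}_x\left(\sum_{\tau_n^D\le t}\mathrm{e}^{-q\tau_n^D}(\eta_n^D-c)-\phi\int_0^t\mathrm{e}^{-qs}\mathrm{d}R(s)+\mathrm{e}^{-qt}f_n(U(t))\right).\nonumber
\end{equation}

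The main obstacle, and the place requiring the most care, is the passage to the limit in $n$ and then in $t$. For the $n$-limit I expect to invoke dominated convergence justified by \eqref{38.v4} together with a uniform-in-$n$ bound on $f_n$ and its first two derivatives that follows from \eqref{unif.boun.s.d.f.1} and the linear-growth structure of $V_{(D^*,R^*)}$; the second-derivative bound is exactly what guarantees the jump sums and the $\mathcal{A}$-integral stay integrable under the SNL dynamics. For the $t\to\infty$ limit, the admissibility condition $\int_0^\infty\mathrm{e}^{-qt}\mathrm{d}R(t)<\infty$ and $q>0$ control the injection term, while the residual term $\mathrm{e}^{-qt}f_n(U(t))$ must be shown to have non-negative (or vanishing) limiting expectation --- here the linear extension $V_{(D^*,R^*)}(x)=V_{(D^*,R^*)}(0)+\phi x$ for $x<0$ and the at-most-linear growth of $f_n$, combined with integrability of $\mathrm{e}^{-qt}U(t)$, force $\liminf_t \mathrm{E}_x(\mathrm{e}^{-qt}f_n(U(t)))\ge0$. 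Passing these limits delivers $f_n(x)\ge V_{(D,R)}(x)$, hence $V_{(D^*,R^*)}(x)\ge V_{(D,R)}(x)$ after sending $n\to\infty$. Since $(D^*,R^*)$ is admissible, $V_{(D^*,R^*)}(x)\le\sup_{(D,R)}V_{(D,R)}(x)$ trivially, and the two inequalities together give the claimed identity.
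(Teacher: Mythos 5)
Your overall route is exactly the paper's: restrict to $\mathcal{D}_1$ via Lemma \ref{comf.}, mollify with Lemma \ref{Lem.1a}, apply It\^{o}'s formula to the discounted mollified value of the controlled process, bound the dividend jumps by \eqref{Optimality conditionplus.0.0} (using $\Delta D>c$ on $\mathcal{D}_1$), the injection terms by $f_n'\le\phi$, the drift by \eqref{Optimality condition.0r}, and pass to the limit in $n$ and $t$. However, there is a concrete gap at the expectation step: you apply It\^{o} over the whole unbounded range of $U$ and assert that ``taking expectations, the martingale terms vanish.'' Without localization, the Brownian integral $\int_0^t\sigma\mathrm{e}^{-qs}f_n'(U(s))\,\mathrm{d}B(s)$ and the compensated jump integral are a priori only local martingales; the uniform-in-$n$ bounds you invoke from \eqref{unif.boun.s.d.f.1} control the mollification parameter, not the spatial variable, and since $U(s)$ is unbounded your integrands need not be bounded or square-integrable --- you would need moment estimates on $\sup_{s\le t}U(s)$, which you do not supply and which admissibility does not provide. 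The paper resolves this by introducing the localization times $T_N=\inf\{t\ge0;\,U(t)>N\}$ and working with $\mathrm{e}^{-q(t\wedge T_N)}f_n(U(t\wedge T_N))$: before $T_N$ the reserve stays in $[0,N]$, the integrands are bounded uniformly in $n$ because $\sup_{n\ge1}\sup_{x\in[0,N]}|f_n(x)|\le\sup_{w\in[0,N]}|f(w)|<\infty$ by continuity of $f$ and the second identity in \eqref{Def.of.fn.0}, the zero-mean martingale property then follows from \cite{Ikeda81}, and the extra limit $N\to\infty$ is taken alongside $n,t\to\infty$.

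A second, smaller slip concerns the terminal term: you appeal to ``integrability of $\mathrm{e}^{-qt}U(t)$,'' which admissibility does not guarantee (it only gives $U\ge0$ and a.s.\ finiteness of $\int_0^\infty\mathrm{e}^{-qt}\mathrm{d}R(t)$). The correct, and simpler, argument --- the one the paper uses --- is that \eqref{Optimality conditionplus.0.0} together with $U\ge0$ yields $f_n(U(t\wedge T_N))\ge\inf_{x\le c}f_n(x)$, a constant bounded uniformly in $n$, so the terminal expectation is bounded below by a vanishing multiple of $\mathrm{e}^{-qt}$ and bounded convergence applies; no moment condition on $U$ is needed. With these two repairs (localization, and the constant lower bound on $f_n$ over $[0,\infty)$) your argument coincides with the paper's proof; the remaining ingredients you outline --- pointwise convergence \eqref{38.v4}, continuity of $f$ and of the optimal value function (Proposition \ref{pro.v1}) to extend the inequality from $(0,\infty)$ to $[0,\infty)$, and the trivial reverse inequality --- are as in the paper.
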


\begin{proof}\quad By Lemma \ref{comf.}, we only need to prove that $(D^*, R^*)$ dominates all strategies among $\mathcal{D}_{1}$.
Let $\{U_{c}(t);t\geq0\}$ and $\{R_{c}(t);t\geq0\}$ be the continuous part of  $\{U(t);t\geq0\}$ and $\{R(t);t\geq0\}$, respectively. Let, for $N\geq 1,$
$$T_{N}=\inf\{t\geq0; U(t)>N\}$$
be the sequence of localization stopping times. Then, for all $t<T_{N}$, it holds that
\begin{equation}\label{U.bounded.up.down}
0\leq U(t)\leq N,
\end{equation}
i.e., both $U(t-)$ and $U(t)$ are restricted to the bounded compact set $\left[0,N\right]$.

Let $(f_{n})_{n\geq1}$ be defined via \eqref{Def.of.fn.0} with $f$ replaced by $V_{(D^*,R^*)}$. Hence, $f_{n}$ is twice differentiable over $[0,\infty)$, and satisfies \eqref{Optimality conditionplus.0.0} and \eqref{Optimality condition.0r}.
By Theorem 4.57 (It\^{o}'s formula) in \cite{Jacod02}, we have, for $x\in(0,\infty)$ and $n\geq1$,
\begin{eqnarray}\label{2.2}
&&\mathrm{e}^{-q (t\wedge T_{N})}f_{n}\left(U(t\wedge T_{N})\right)\nonumber\\
&=&f_{n}(x)-\int_{0-}^{t\wedge T_{N}}q \mathrm{e}^{-q s}f_{n}(U(s))\mathrm{d}s+\int_{0-}^{t\wedge T_{N}} \mathrm{e}^{-q s}f_{n}^{\prime}(U(s))\mathrm{d}U(s)\nonumber\\
&&+\frac{1}{2} \int_{0-}^{t\wedge T_{N}} \mathrm{e}^{-q s}f_{n}^{\prime\prime}(U(s))\mathrm{d}\langle U_{c}(\cdot),U_{c}(\cdot)\rangle_{s}
\nonumber\\
&&+\sum_{s\leq t\wedge T_{N}}\mathrm{e}^{-q s}\left(f_{n}(U(s-)+\Delta U(s))-f_{n}(U(s-))-f_{n}^{\prime}(U(s-))\Delta U(s)\right)
\nonumber\\
&&+\sum_{s\leq t\wedge T_{N}}\mathrm{e}^{-q s}\left(f_{n}(U(s+))-f_{n}\left(U(s)\right)+f_{n}^{\prime}(U(s))\left( D(s+)-D(s)\right)\right)\nonumber\\
&=&f_{n}(x)+\int_{0-}^{t\wedge T_{N}}\mathrm{e}^{-q s}(\mathcal{A}-q)f_{n}(U(s))\mathrm{d}s
+\int_{0-}^{t\wedge T_{N}}\sigma \mathrm{e}^{-q s}f_{n}^{\prime}(U(s))\mathrm{d}B(s)\nonumber\\
&&+\int_{0-}^{t\wedge T_{N}} \mathrm{e}^{-q s}f_{n}^{\prime}(U(s))\mathrm{d}R_{c}(s)\nonumber\\
&&+\int_{0-}^{t\wedge T_{N}}\int_{0}^{\infty}\mathrm{e}^{-q s}\left(f_{n}(U(s-)
-y)-f_{n}(U(s-))\right)
\overline{N}(\mathrm{d}s,\mathrm{d}y)\nonumber\\
&&+\int_{0-}^{t\wedge T_{N}}\int_{0}^{\infty}\mathrm{e}^{-q s}\left(f_{n}(U(s-)+
\Delta R(s)-y)-f_{n}(U(s-)-y)\right)N(\mathrm{d}s,\mathrm{d}y)\nonumber\\
&&+\sum_{s\leq t\wedge T_{N}} \mathrm{e}^{-q s}[f_{n}(U(s+))-f_{n}(U(s+)+D(s+)-D(s))],
\end{eqnarray}
where $\Delta U(s)=U(s)-U(s-)$ and $\Delta R(s)=R(s)-R(s-)$. Due to $(D,R)\in\mathcal{D}_{1}$, one knows that $\Delta R(s)>0$ implies a jump of $N(\cdot,\cdot)$ at time $s$ (i.e., whenever there is a jump in $R$, there must be a jump in $X$).
By \eqref{Optimality conditionplus.0.0} and the fact that  $D(s+)-D(s)>c$, we have, for $s\in\left[0,t\wedge T_{N}\right)$,
\begin{eqnarray}
&&\hspace{-0.6cm}f_{n}\left(U(s+)\right)-f_{n}\left(U(s+)+\left( D(s+)-D(s)\right)\right)+ D(s+)-D(s)-c\le 0,\label{2.3}\\
&&f_{n}(U(s-)+\Delta R(s)-y)-f_{n}(U(s-)-y)\le \phi\Delta R(s).\label{f'phi}
\end{eqnarray}
Therefore, by \eqref{Optimality conditionplus.0.0}, \eqref{Optimality condition.0r}, (\ref{2.2}), (\ref{2.3}), and (\ref{f'phi}), we have
\begin{eqnarray}\label{3.10}
&&\mathrm{e}^{-q (t\wedge T_{N})}f_{n}\left(U_{t\wedge T_{N}}\right)\nonumber\\
&\leq& f_{n}(x)+\phi\int_{0-}^{t\wedge T_{N}} \mathrm{e}^{-q s}\mathrm{d}R_{c}(s)\nonumber\\
&&+\phi\sum_{s\leq t\wedge T_{N}}\mathrm{e}^{-q s}\Delta R(s)+\int_{0-}^{t\wedge T_{N}}\sigma \mathrm{e}^{-q s}f_{n}^{\prime}(U(s))\mathrm{d}B(s)\nonumber\\
&&+\int_{0-}^{t\wedge T_{N}}\int_{0}^{\infty}\mathrm{e}^{-q s}\left(f_{n}(U(s-)-y)-f_{n}(U(s-))\right)
\overline{N}(\mathrm{d}s,\mathrm{d}y)\nonumber\\
&& -\sum\limits_{s\leq t\wedge T_{N}}\mathrm{e}^{-q s}( D(s+)-D(s)-c)
,\quad x\in(0,\infty),\quad n\geq1.
\end{eqnarray}
In addition, according to \cite{Ikeda81} (page 62), the compensated sum
$$t\mapsto \int_{0-}^{t\wedge T_{N}}\int_{0}^{\infty}\mathrm{e}^{-q s}\left(f_{n}(U(s-)
-y)-f_{n}(U(s-))\right)
\overline{N}(\mathrm{d}s,\mathrm{d}y)$$
is an $(\mathcal{F}_{t})$-martingale with zero mean. Indeed, the integrand of the above stochastic integration is bounded from below and above owing to (\ref{U.bounded.up.down}) and
\begin{eqnarray*}
\sup_{n\geq1}\sup_{x\in\left[0,N\right]}\left|f_{n}(x)\right|
\leq\sup_{w\in\left[0,N\right]}\left|f\left(w\right)\right|<\infty,
\end{eqnarray*}
where we have used $f(x)\in C[0,\infty)$ and the second equality in (\ref{Def.of.fn.0}).
Similarly, the integration
$$t\mapsto \int_{0-}^{t\wedge T_{N}}\sigma \mathrm{e}^{-q s}f_{n}^{\prime}(U(s))\mathrm{d}B(s)$$
is also an $(\mathcal{F}_{t})$-martingale with zero mean.

Taking expectations on both sides of \eqref{3.10}, we have
\begin{eqnarray}\label{45.v4}
f_{n}(x)&\geq&\mathrm{E}_x\left(\mathrm{e}^{-q (t\wedge T_{N})}f_{n}(U_{t\wedge T_{N}}^{D})\right)
-\phi \mathrm{E}_x\left(\int_{0-}^{t\wedge T_{N}} \mathrm{e}^{-q s}\mathrm{d}R(s)\right)\nonumber\\
&&+ \mathrm{E}_x\left(\sum_{s\leq t\wedge T_{N}}\mathrm{e}^{-q s}( D(s+)-D(s)-c)\right)
\nonumber\\
&\geq&\mathrm{E}_x\left(\mathrm{e}^{-q (t\wedge T_{N})}\left[\inf_{x\leq c}f_{n}(x)\right]\right)
-\phi \mathrm{E}_x\left(\int_{0-}^{t\wedge T_{N}} \mathrm{e}^{-q s}\mathrm{d}R(s)\right)\nonumber\\
&&+ \mathrm{E}_x\left(\sum_{s\leq t\wedge T_{N}}\mathrm{e}^{-q s}( D(s+)-D(s)-c)\right),\quad x\in(0,\infty),
\end{eqnarray}
where we have used the fact that, by \eqref{Optimality conditionplus.0.0}
$$f_{n}(U_{t\wedge T_{N}}^{D})
\geq [f_{n}(0)+U_{t\wedge T_{N}}^{D}-c]\mathbf{1}_{\{U_{t\wedge T_{N}}^{D}\geq c\}}
+\inf_{x\leq c}f_{n}(x)\mathbf{1}_{\{U_{t\wedge T_{N}}^{D}\leq c\}}
\geq\inf_{x\leq c}f_{n}(x).$$
By setting $n, t, N\to\infty$ in (\ref{45.v4}), and then using \eqref{38.v4} and the bounded convergence theorem ($\sup_{n\geq 1}\inf_{x\leq c}f_{n}(x)$ is bounded), we get
\begin{eqnarray}\label{46.v4}
f(x)&\geq&
-\phi \mathrm{E}_x\left(\int_{0-}^{\infty} \mathrm{e}^{-q s}\mathrm{d}R(s)\right)+ \mathrm{E}_x\left(\sum_{s}\mathrm{e}^{-q s}( D(s+)-D(s)-c)\right)\nonumber\\
&=&V_{(D,R)}(x),\quad x\in(0,\infty).\nonumber
\end{eqnarray}
By the arbitrariness of $(D,R)$, it follows that
$f(x)\geq \sup\limits_{(D,R)\in\mathcal{D}}V_{(D,R)}(x)$ for all $x\in(0,\infty)$,
which, along with the continuity of $f(x)$ and $\sup_{(D,R)\in\mathcal{D}}V_{(D,R)}(x)$ (see Proposition \ref{pro.v1}), implies
$$
f(x)\geq \sup_{(D,R)\in\mathcal{D}}V_{(D,R)}(x),\quad\forall\,x\in[0,\infty).
$$
Hence, by assumption we reach
$$
V_{(D^{*},R^{*})}(x)\geq \sup_{(D,R)\in\mathcal{D}}V_{(D,R)}(x),\quad \forall\,x\in[0,\infty).
$$
Since the reverse of the above inequality is trivial, we conclude with the desired equality. The proof is completed.
\qed
\end{proof}

\begin{rem}
Because $V_{(D^*,R^*)}$ lacks twice differentiability at $(d_{i})_{i\leq m}$, an appropriate generalized version of the It\^{o}'s lemma such as the
It\^{o}-Tanaka-Meyer formula (see \cite{Protter95}) should be applied to prove the verification argument.
In our case, we employed an alternative mollifying technique (see Lemmas \ref{Lem.1a} and \ref{Lem.1}) to deal with
the difficulty of lack of sufficient differentiability.

The mollifying arguments given in Lemmas \ref{Lem.1a} and \ref{Lem.1} are rigorous and differ from the approach adopted in \cite{Junca18} when proving their verification theorem.
\qed
\end{rem}

The following Lemma \ref{4.5} and Lemma \ref{4.6} are useful for characterizing the optimal IDCI strategy and the associated optimal value function in Theorem \ref{HJB}.

\begin{lem}\label{4.5}
Given $(z_{1},z_{2})\in\mathcal{M}$, we have
\begin{eqnarray}\label{decreasingsuff.cond.}
\hspace{-0.6cm}\phi+\frac{(1-\phi Z^{(q)}(x))[W^{(q)}(x)]^{\prime}}{q[W^{(q)}(x)]^{2}}
&\geq&
0,\quad x\in [z_{2},\infty).
\end{eqnarray}
\end{lem}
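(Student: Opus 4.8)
The plan is to convert the claimed inequality \eqref{decreasingsuff.cond.} into a statement about the Laplace transform $\mathrm{E}_{0}(\mathrm{e}^{-q\hat{\tau}_{x}})$ and then deduce the latter from the maximality of $(z_{1},z_{2})$ via the identity \eqref{25}. First I would rewrite the left-hand side of \eqref{decreasingsuff.cond.} using \eqref{25.ax}. Since \eqref{25.ax} gives $Z^{(q)}(x)-\mathrm{E}_{0}(\mathrm{e}^{-q\hat{\tau}_{x}})=q[W^{(q)}(x)]^{2}/W^{(q)\prime}(x)$, equivalently $W^{(q)\prime}(x)/(q[W^{(q)}(x)]^{2})=1/(Z^{(q)}(x)-\mathrm{E}_{0}(\mathrm{e}^{-q\hat{\tau}_{x}}))$, a direct recombination yields
\begin{equation*}
\phi+\frac{(1-\phi Z^{(q)}(x))W^{(q)\prime}(x)}{q[W^{(q)}(x)]^{2}}=\frac{W^{(q)\prime}(x)}{q[W^{(q)}(x)]^{2}}\left(1-\phi\,\mathrm{E}_{0}(\mathrm{e}^{-q\hat{\tau}_{x}})\right).
\end{equation*}
Because $W^{(q)\prime}(x)\geq0$ and $W^{(q)}(x)>0$ on $[z_{2},\infty)$ (and where $W^{(q)\prime}(x)=0$ the left side of \eqref{decreasingsuff.cond.} equals $\phi>0$, so the inequality is trivial), the claim \eqref{decreasingsuff.cond.} is equivalent to $\phi\,\mathrm{E}_{0}(\mathrm{e}^{-q\hat{\tau}_{x}})\leq1$ for $x\in[z_{2},\infty)$. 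Moreover, since $\hat{\tau}_{x}$ is non-decreasing in $x$ (as noted in the proof of Proposition \ref{Pro.2}), the map $x\mapsto\mathrm{E}_{0}(\mathrm{e}^{-q\hat{\tau}_{x}})$ is non-increasing, so it suffices to establish the single inequality $\phi\,\mathrm{E}_{0}(\mathrm{e}^{-q\hat{\tau}_{z_{2}}})\leq1$ at the point $x=z_{2}$.

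For this last step I would exploit \eqref{25} together with the first-order condition at a maximiser. Setting $\Psi(z_{1},z_{2}):=\frac{[Z^{(q)}(z_{2})-Z^{(q)}(z_{1})]^{2}}{qW^{(q)}(z_{2})}\,\frac{\partial}{\partial z_{2}}\xi(z_{1},z_{2})$, the positive prefactor shows that $\Psi$ has the same sign as $\frac{\partial}{\partial z_{2}}\xi$, while \eqref{25} shows that $\frac{\partial}{\partial z_{2}}\Psi$ carries the sign of $\phi\,\mathrm{E}_{0}(\mathrm{e}^{-q\hat{\tau}_{z_{2}}})-1$, which is non-increasing in $z_{2}$. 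By Proposition \ref{Pro.2}, a maximiser $(z_{1},z_{2})\in\mathcal{M}$ satisfies $z_{1}+c<z_{2}$ and $\frac{\partial}{\partial z_{2}}\xi(z_{1},z_{2})=0$, hence $\Psi(z_{1},z_{2})=0$. Suppose, for contradiction, that $\phi\,\mathrm{E}_{0}(\mathrm{e}^{-q\hat{\tau}_{z_{2}}})>1$. By the monotone sign, $\frac{\partial}{\partial z_{2}}\Psi(z_{1},y)>0$ for every $y\in(z_{1},z_{2}]$, so $\Psi(z_{1},\cdot)$ is strictly increasing on $(z_{1},z_{2}]$; combined with $\Psi(z_{1},z_{2})=0$ this gives $\Psi(z_{1},y)<0$, and hence $\frac{\partial}{\partial z_{2}}\xi(z_{1},y)<0$, for all $y\in(z_{1},z_{2})$. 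Thus $\xi(z_{1},\cdot)$ is strictly decreasing on $[z_{1}+c,z_{2}]$, forcing $\xi(z_{1},z_{1}+c)>\xi(z_{1},z_{2})$ — contradicting that $(z_{1},z_{2})$ maximises $\xi$ over the feasible set. Therefore $\phi\,\mathrm{E}_{0}(\mathrm{e}^{-q\hat{\tau}_{z_{2}}})\leq1$, which is exactly what remained to be shown.

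I expect the main obstacle to be the first step: recognising that the opaque combination $\phi+(1-\phi Z^{(q)})W^{(q)\prime}/(q[W^{(q)}]^{2})$ collapses, through \eqref{25.ax}, into the transparent factor $1-\phi\,\mathrm{E}_{0}(\mathrm{e}^{-q\hat{\tau}_{x}})$; once this reduction is in hand, the monotonicity of $\hat\tau_x$ and the sign analysis of $\Psi$ are routine. A secondary point requiring care is that $W^{(q)}$ is only assumed piecewise continuously differentiable, so the monotonicity of $\Psi$ should be argued on each differentiability interval and then pieced together using continuity of $\Psi$ across the finitely many exceptional points; this does not affect the sign bookkeeping above.
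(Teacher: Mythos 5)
Your proof is correct, and it reaches the decisive inequality by a genuinely different mechanism from the paper's, although the two arguments open identically: the paper also uses \eqref{25.ax} to collapse the left-hand side of \eqref{decreasingsuff.cond.} into $-\frac{(\phi H(x)-1)[W^{(q)}(x)]^{\prime}}{q[W^{(q)}(x)]^{2}}$ with $H(x)=\mathrm{E}_{0}(\mathrm{e}^{-q\hat{\tau}_{x}})$ decreasing to $0$ (exactly your factorisation), so in both cases the claim reduces, via monotonicity of $H$, to $\phi\,\mathrm{E}_{0}(\mathrm{e}^{-q\hat{\tau}_{z_{2}}})\leq 1$. The divergence is in how optimality delivers this. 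The paper introduces $a_{0}$, the unique zero of $\phi H-1$ when $\phi H(0)>1$, observes that $x\mapsto \frac{1-\phi Z^{(q)}(x)}{qW^{(q)}(x)}$ is unimodal with peak at $a_{0}$, and rules out $z_{2}\in(z_{1},a_{0})$ by differentiating in the \emph{first} coordinate: using the first-order condition \eqref{parc2.=0}, it computes $\frac{\partial}{\partial z_{1}}\xi(z_{1},z_{2})=\frac{qW^{(q)}(z_{1})}{Z^{(q)}(z_{2})-Z^{(q)}(z_{1})}\left(\frac{1-\phi Z^{(q)}(z_{2})}{qW^{(q)}(z_{2})}-\frac{1-\phi Z^{(q)}(z_{1})}{qW^{(q)}(z_{1})}\right)>0$ there, contradicting maximality in $z_{1}$. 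You instead stay entirely in the second coordinate: integrating the sign information of \eqref{25} from $y$ up to $z_{2}$, with $\Psi(z_{1},z_{2})=0$ supplied by \eqref{parc2.=0}, you force $\frac{\partial}{\partial z_{2}}\xi(z_{1},y)<0$ on $(z_{1},z_{2})$ under the contradiction hypothesis, hence $\xi(z_{1},z_{1}+c)>\xi(z_{1},z_{2})$, violating optimality against the feasible corner $(z_{1},z_{1}+c)$; the strictness $z_{1}+c<z_{2}$ and the first-order condition that you invoke are both legitimately backed by Proposition \ref{Pro.2}. What your route buys: no case split on $\phi H(0)$, no auxiliary point $a_{0}$, no unimodality analysis, and no $z_{1}$-derivative — it recycles the machinery of \eqref{25} that the paper had already built for Proposition \ref{Pro.2}. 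What the paper's route buys: slightly more structural information, since it locates $z_{2}\geq a_{0}$ explicitly, identifying the region where the barrier value $\frac{1-\phi Z^{(q)}}{qW^{(q)}}$ is decreasing (which is also how Lemma \ref{4.6} later consumes \eqref{decreasingsuff.cond.}). Your closing remarks are exactly the care needed: $\Psi$ is continuous in $z_{2}$ because $\frac{\partial}{\partial z_{2}}\xi$ in \eqref{repr.of.deri.xi} involves only $W^{(q)}$, $Z^{(q)}$, $\overline{Z}^{(q)}$ and not $W^{(q)\prime}$, so the monotonicity of $\Psi$ indeed pieces together across the finitely many exceptional points, and the degenerate case $W^{(q)\prime}(x)=0$ (which in fact cannot occur for $q>0$, as $W^{(q)\prime}\geq \Phi_{q}W^{(q)}>0$ wherever defined) is harmlessly dispatched.
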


\begin{proof}\quad
It is seen that
\begin{eqnarray}\label{decreasingsuff.cond.00}
\hspace{-0.6cm}\phi+\frac{(1-\phi Z^{(q)}(x))[W^{(q)}(x)]^{\prime}}{q[W^{(q)}(x)]^{2}}&=&
\frac{\mathrm{d}}{\mathrm{d}x}\left(-\frac{1-\phi Z^{(q)}(x)}{qW^{(q)}(x)}\right)
\nonumber\\
&=&
-\frac{(\phi H(x)-1)[W^{(q)}(x)]^{\prime}}{q(W^{(q)}(x))^{2}}
,\, x\in[z_{2},\infty),
\end{eqnarray}
where $H(x)=Z^{(q)}(x)-q(W^{(q)}(x))^{2}/[W^{(q)}(x)]^{\prime}$.
By \eqref{25.ax} and $\lim\limits_{z_{2}\to\infty}\hat{\tau}_{z_{2}}=\infty$ we know that  $H(x)$ decreases in $x$ with $\lim_{x\to\infty}H(x)=0$.
Let $a_{0}>0$ be the unique zero of the function $\phi H(x)-1$ when $\phi H(0)>1$, then the inequality \eqref{decreasingsuff.cond.}
is equivalent to
\begin{equation}\label{equi.rela.}
z_{2}\geq \inf\{x>0; \frac{(\phi H(x)-1)[W^{(q)}(x)]^{\prime}}{q(W^{(q)}(x))^{2}} \leq0\}=\begin{cases}a_{0},\,\mbox{ when }\,\phi H(0)>1,\\
0,\,\,\,\,\mbox{ otherwise}.\nonumber
\end{cases}
\end{equation}
Since $z_{2}\geq 0$ holds trivially, we only need to show that $z_{2}\geq a_{0}$ holds when $\phi H(0)>1$.
Given $\phi H(0)>1$, by \eqref{decreasingsuff.cond.00} and the decreasing property of $H(x)$, the function $\frac{1-\phi Z^{(q)}(x)}{qW^{(q)}(x)}$ is increasing (decreasing) over $[0,a_{0})$ ($(a_{0},\infty)$), and attains its  maximum at $a_{0}$. So, when $\frac{1-\phi Z^{(q)}(z_{2})}{qW^{(q)}(z_{2})}=\frac{1-\phi Z^{(q)}(a_{0})}{qW^{(q)}(a_{0})}$ we must have $z_{2}= a_{0}$. Further, when $\frac{1-\phi Z^{(q)}(z_{2})}{qW^{(q)}(z_{2})}<\frac{1-\phi Z^{(q)}(a_{0})}{qW^{(q)}(a_{0})}$ we should have
$z_{2}>a_{0}$. Otherwise, $z_{2}$ will be in the range $(z_{1},a_{0})$, which leads to
\begin{eqnarray}
\frac{\partial}{\partial z_{1}}\xi(z_{1},z_{2})
&=&
\frac{qW^{(q)}(z_{1})}{Z^{(q)}(z_{2})-Z^{(q)}(z_{1})}
\left(\xi(z_{1},z_{2})-\frac{1-\phi Z^{(q)}(z_{1})}{qW^{(q)}(z_{1})}\right)
\nonumber\\
&=&
\frac{qW^{(q)}(z_{1})}{Z^{(q)}(z_{2})-Z^{(q)}(z_{1})}
\left(\frac{1-\phi Z^{(q)}(z_{2})}{qW^{(q)}(z_{2})}-\frac{1-\phi Z^{(q)}(z_{1})}{qW^{(q)}(z_{1})}\right)
\nonumber\\
&>&0.\nonumber
\end{eqnarray}
This result contradicts the fact that $\xi$ attains its  maximum at $(z_{1},z_{2})$, so $z_{2}\notin(z_{1},a_{0})$. The proof is completed.
\qed
\end{proof}

In the sequel, we extend the function $V_{z_{1}}^{z_{2}}$ to the entire real axis by setting $V_{z_{1}}^{z_{2}}(x) = V_{z_{1}}^{z_{2}}(0)+\phi x$ for $x<0$. We denote by $V_{x}(y)$ the value function of the barrier dividend and capital injection strategy with barrier level $x$ and initial reserve $y$ (cf., Equation (5.4) in \cite{Avram07}), i.e.,
\begin{equation}\label{barrierstra.Vxy}
\hspace{-0.05cm}
V_{x}(y)=\left\{\begin{array}{ll}
V_{x}(0)+\phi y,&y<0,\\
\phi(\overline{Z}^{(q)}(y)+\frac{\psi^{\prime}(0+)}{q})+ Z^{(q)}(y)\frac{1-\phi Z^{(q)}(x)}{qW^{(q)}(x)},&y\in[0,x),\\
y-x+\phi(\overline{Z}^{(q)}(x)+\frac{\psi^{\prime}(0+)}{q})+ Z^{(q)}(x)\frac{1-\phi Z^{(q)}(x)}{qW^{(q)}(x)}, & y\geq x.
\end{array}\right.
\end{equation}

\begin{lem}\label{4.6}
Given $(z_{1},z_{2})\in\mathcal{M}$ and $x\in(z_{2},\infty)$, define
$$h(z):=V_{z_{1}}^{z_{2}}(z)-V_{x}(z),\quad z\in(-\infty,x].$$
Then, $h(z)$ is non-decreasing with respect to $z$ and $h(x)\geq0$.
\end{lem}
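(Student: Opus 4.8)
The plan is to first recognize that, by Proposition \ref{Pro.3} and the definition \eqref{barrierstra.Vxy}, the value function $V_{z_1}^{z_2}$ of a maximizing pair $(z_1,z_2)\in\mathcal{M}$ coincides on $[0,\infty)$ (and, through the common linear extension of slope $\phi$ below $0$, on all of $(-\infty,\infty)$) with the barrier value function $V_{z_2}$ at level $z_2$. Thus $h(z)=V_{z_2}(z)-V_x(z)$ is the difference of two barrier value functions whose barriers satisfy $z_2<x$. I would then split $(-\infty,x]$ into the three regions $(-\infty,0)$, $[0,z_2)$ and $[z_2,x]$ on which both functions admit the explicit closed forms of \eqref{barrierstra.Vxy}, prove monotonicity of $h$ on each region, and glue the pieces together using continuity of $h$.

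The single analytic input I would rely on is Lemma \ref{4.5}: its conclusion \eqref{decreasingsuff.cond.} is, via \eqref{decreasingsuff.cond.00}, exactly $\frac{\mathrm{d}}{\mathrm{d}x}\!\left(-\frac{1-\phi Z^{(q)}(x)}{qW^{(q)}(x)}\right)\ge 0$ on $[z_2,\infty)$, so the map $w\mapsto \frac{1-\phi Z^{(q)}(w)}{qW^{(q)}(w)}$ is non-increasing on $[z_2,\infty)$. Setting $K:=\frac{1-\phi Z^{(q)}(z_2)}{qW^{(q)}(z_2)}-\frac{1-\phi Z^{(q)}(x)}{qW^{(q)}(x)}$, this monotonicity together with $z_2<x$ gives $K\ge 0$ at once. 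On $(-\infty,0)$ both functions have the form $(\text{const})+\phi z$, so $h\equiv K$ is constant. On $[0,z_2)$ the common term $\phi\left(\overline{Z}^{(q)}(z)+\psi^{\prime}(0+)/q\right)$ cancels and $h(z)=Z^{(q)}(z)\,K$, which is non-decreasing because $Z^{(q)}$ is increasing and $K\ge 0$.

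On the remaining region $[z_2,x]$ I would compute $h^{\prime}(z)=V_{z_2}^{\prime}(z)-V_x^{\prime}(z)=1-V_x^{\prime}(z)$, using $V_{z_2}(z)=z-z_2+(\text{const})$ for $z\ge z_2$ and, as in Remark \ref{rem1rem1}, the derivative formula $V_x^{\prime}(z)=\phi Z^{(q)}(z)+W^{(q)}(z)\frac{1-\phi Z^{(q)}(x)}{W^{(q)}(x)}$ valid for $z\in[0,x)$. Since $W^{(q)}(z)>0$, the inequality $V_x^{\prime}(z)\le 1$ is equivalent to $\frac{1-\phi Z^{(q)}(z)}{W^{(q)}(z)}\ge \frac{1-\phi Z^{(q)}(x)}{W^{(q)}(x)}$, which again follows from the non-increasing property above because $z_2\le z\le x$. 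Hence $h^{\prime}\ge 0$ on $[z_2,x]$. Gluing the three non-decreasing pieces by continuity of $h$ yields that $h$ is non-decreasing on $(-\infty,x]$. Finally, $h(x)\ge h(0)=Z^{(q)}(0)\,K=K\ge 0$, which proves $h(x)\ge 0$.

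I do not expect a serious obstacle once the identification $V_{z_1}^{z_2}=V_{z_2}$ is in place, since everything then reduces to the sign of $K$ and the monotonicity of $w\mapsto\frac{1-\phi Z^{(q)}(w)}{W^{(q)}(w)}$, both delivered by Lemma \ref{4.5}. The genuinely delicate region is $[z_2,x]$, where the proof hinges on the derivative reduction just described; the only further point requiring care is the gluing across $z=0$ and $z=z_2$, where I would invoke continuity of $h$ rather than matching one-sided derivatives, since the derivative need not be continuous across $0$ owing to the artificial linear extension of the value functions below $0$.
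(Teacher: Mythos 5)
Your proposal is correct and follows essentially the same route as the paper: the same decomposition of $(-\infty,x]$ into $(-\infty,0)$, $[0,z_2)$, $[z_2,x]$, the same key input (Lemma \ref{4.5}, read as monotonicity of $w\mapsto\frac{1-\phi Z^{(q)}(w)}{qW^{(q)}(w)}$ on $[z_2,\infty)$), and the same derivative identity $h^{\prime}(z)=qW^{(q)}(z)\bigl(\frac{1-\phi Z^{(q)}(z)}{qW^{(q)}(z)}-\frac{1-\phi Z^{(q)}(x)}{qW^{(q)}(x)}\bigr)$ on $[z_2,x]$, where the paper implements the gluing across the non-differentiability points $(d_i)$ explicitly via telescoping Mean Value Theorem sums while you invoke the equivalent continuity-plus-piecewise-monotonicity principle. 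Your minor streamlinings --- identifying $V_{z_1}^{z_2}$ with the barrier value function $V_{z_2}$ upfront, using the closed form $h=Z^{(q)}\cdot K$ on $[0,z_2)$ instead of differentiating, and deducing $h(x)\geq 0$ from monotonicity and $h(0)=K\geq 0$ rather than a separate Mean Value Theorem computation --- are sound but do not change the substance of the argument.
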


\begin{proof}
By the Mean Value Theorem, for $x>z_{2}$ we have
\begin{eqnarray}\label{65}
h(x)&=&V_{z_{1}}^{z_{2}}(x)-V_{x}(x)
\nonumber\\
&=&\sum_{i=0}^{m}\left(-y+\phi\overline{Z}^{(q)}(y)
+ Z^{(q)}(y)\frac{1-\phi Z^{(q)}(y)}{qW^{(q)}(y)}\right)\bigg|_{x_{i+1}}^{x_{i}}\nonumber\\
&=&\sum_{i=0}^{m}Z^{(q)}(\theta_{i})\left(-\phi-\frac{(1-\phi Z^{(q)}(\theta_{i}))[W^{(q)}(\theta_{i})]^{\prime}}{q[W^{(q)}(\theta_{i})]^{2}}\right)(x_{i}-x_{i+1})\geq0,\nonumber
\end{eqnarray}
where $x_{0}=z_{2}$, $x_{m+1}=x$, $x_{i}=(z_{2}\vee d_{i})\wedge x$ for $i\in\{1,\cdots,m\}$, $\theta_{i}\in(x_{i},x_{i+1})$ and (\ref{decreasingsuff.cond.}) holds for $\theta_{i}\in(x_{i},x_{i+1})\subseteq(z_{2},x)$ whenever $x_{i}<x_{i+1}$.

By (\ref{decreasingsuff.cond.}) and the Mean Value Theorem, we have
\begin{eqnarray*}
h^{\prime}(z)&=&Z^{(q)}(z)\left(\frac{1-\phi Z^{(q)}(z_{2})}{qW^{(q)}(z_{2})}-\frac{1-\phi Z^{(q)}(x)}{qW^{(q)}(x)}\right)\\
&=&
Z^{(q)}(z)\sum_{i=0}^{m}\left(\frac{1-\phi Z^{(q)}(x_{i})}{qW^{(q)}(x_{i})}-\frac{1-\phi Z^{(q)}(x_{i+1})}{qW^{(q)}(x_{i+1})}\right)\\
&=&
Z^{(q)}(z)\sum_{i=0}^{m}\left(-\phi-\frac{(1-\phi Z^{(q)}(\theta_{i}))[W^{(q)}(\theta_{i})]^{\prime}}{q[W^{(q)}(\theta_{i})]^{2}}\right)(x_{i}-x_{i+1})
\ge0,
\end{eqnarray*}
for $z\in[0,z_{2})$;
\begin{eqnarray*}
h^{\prime}(z)&=&1-\phi Z^{(q)}(z)-W^{(q)}(z)\frac{1-\phi Z^{(q)}(x)}{W^{(q)}(x)}\nonumber\\
&=&qW^{(q)}(z)\left(\frac{1-\phi Z^{(q)}(z)}{qW^{(q)}(z)}-\frac{1-\phi Z^{(q)}(x)}{qW^{(q)}(x)}\right)\\
&=&qW^{(q)}(z)\sum_{i=0}^{m}\left(\frac{1-\phi Z^{(q)}(y_{i})}{qW^{(q)}(y_{i})}-\frac{1-\phi Z^{(q)}(y_{i+1})}{qW^{(q)}(y_{i+1})}\right)\\
&=&qW^{(q)}(z)\sum_{i=0}^{m}\left(-\phi-\frac{(1-\phi Z^{(q)}(\eta_{i}))[W^{(q)}(\eta_{i})]^{\prime}}{q[W^{(q)}(\eta_{i})]^{2}}\right)(y_{i}-y_{i+1})\ge0,
\end{eqnarray*}
for $z\in[z_{2},x)$, $y_{0}=z$, $y_{m+1}=x$, $y_{i}=(z\vee d_{i})\wedge x$, and $\eta_{i}\in(y_{i},y_{i+1})\subseteq(z,x)$, $i=1,\cdots,m$; and
$$
h^{\prime}(z)=\phi-\phi=0,\quad\mbox{for $z\in(-\infty,0)$.}
$$
The proof is completed.
\qed
\end{proof}

The following theorem characterizes the optimal IDCI strategy among all admissible IDCI strategies. The ideas in the proof are partly obtained from \cite{Avram07} and \cite{Loeffen08}. This theorem shows that any IDCI strategy $(z_{1},z_{2})\in\mathcal{M}$ is optimal and dominates all admissible IDCI strategies.

\begin{thm}\label{HJB}
Suppose that the scale function $W^{(q)}$ is piece-wise continuously differentiable over  all compact subsets of $[0,\infty)$.
Let $(z_{1},z_{2})\in\mathcal{M}$. Then the $(z_{1},z_{2})$ strategy is optimal among all admissible IDCI strategies.
\end{thm}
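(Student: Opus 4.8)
The plan is to verify that the value function $V_{z_{1}}^{z_{2}}$ of the $(z_{1},z_{2})$ strategy---extended to $(-\infty,\infty)$ by $V_{z_{1}}^{z_{2}}(x)=V_{z_{1}}^{z_{2}}(0)+\phi x$ for $x<0$---fulfils all the hypotheses of the verification Lemma \ref{Lem.1}, and then to invoke that lemma with $(D^{*},R^{*})$ taken to be the $(z_{1},z_{2})$ strategy, whose value function is exactly $V_{z_{1}}^{z_{2}}$. First I would dispose of the regularity and monotonicity requirements. Conditions \eqref{unif.boun.s.d.f.0} and \eqref{unif.boun.s.d.f.1} follow from the derivative formulas in Remark \ref{rem1rem1}: since $W^{(q)}$ is piece-wise continuously differentiable on compacts and $V_{z_{1}}^{z_{2}}$ is linear on $(-\infty,0]$ and on $[z_{2},\infty)$, the function is twice continuously differentiable off a finite set $(d_{i})_{i\leq m}\subseteq[0,z_{2}]$ (into which I absorb the points $0$ and $z_{2}$), with finite one-sided second-derivative limits there. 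Condition \eqref{Optimality conditionplus.0.0} is precisely the content of Proposition \ref{Pro.4}, i.e. $[V_{z_{1}}^{z_{2}}]^{\prime}\leq\phi$ on $[0,\infty)$ together with $V_{z_{1}}^{z_{2}}(x_{2})-V_{z_{1}}^{z_{2}}(x_{1})\geq x_{2}-x_{1}-c$ whenever $x_{2}\geq x_{1}+c$; the linear extension below $0$ has slope $\phi$, so both properties survive the extension.

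It then remains to check the HJB inequality \eqref{Optimality condition.0}, which I would split at $z_{2}$. On $(0,z_{2})$, comparing Proposition \ref{Pro.3} with \eqref{barrierstra.Vxy} shows that $V_{z_{1}}^{z_{2}}$ coincides on $(-\infty,z_{2})$ with the barrier-and-injection value function $V_{z_{2}}$ (the two expressions agree, including the linear pieces below $0$). Since $V_{z_{2}}$ is the value function of an admissible strategy that, below its barrier, merely reflects the reserve at $0$ and pays no dividend, it is $q$-harmonic in the continuation region; equivalently, by the scale-function identities $(\mathcal{A}-q)Z^{(q)}\equiv0$ and $(\mathcal{A}-q)\overline{Z}^{(q)}\equiv\psi^{\prime}(0+)$ on $(0,\infty)$ underlying \cite{Avram07}, one computes $(\mathcal{A}-q)V_{z_{1}}^{z_{2}}(x)=0$ for $x\in(0,z_{2})$.

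The main obstacle is the region $x\in(z_{2},\infty)$, where $V_{z_{1}}^{z_{2}}$ is linear and no harmonicity is available directly. Here I would use the comparison of Lemma \ref{4.6}: fixing $x>z_{2}$ and writing $h(z)=V_{z_{1}}^{z_{2}}(z)-V_{x}(z)$ on $(-\infty,x]$, with $V_{x}$ the barrier value function with barrier $x$ (again the value function of an admissible strategy), a differentiation of \eqref{barrierstra.Vxy} gives $V_{x}^{\prime}(x-)=1=[V_{z_{1}}^{z_{2}}]^{\prime}(x)$, so $h^{\prime}(x)=0$, while $V_{x}^{\prime\prime}(x-)=qW^{(q)}(x)\bigl(\phi+\tfrac{(1-\phi Z^{(q)}(x))[W^{(q)}(x)]^{\prime}}{q[W^{(q)}(x)]^{2}}\bigr)\geq0$ by Lemma \ref{4.5} (applicable since $x>z_{2}$), so that $h^{\prime\prime}(x-)=-V_{x}^{\prime\prime}(x-)\leq0$. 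Because $V_{x}$ is $q$-harmonic on $(0,x)$, letting $y\uparrow x$ in $(\mathcal{A}-q)V_{x}(y)=0$---the jump integral converging by dominated convergence and the local terms by one-sided limits---gives $(\mathcal{A}-q)V_{x}(x)=0$, read with the left second derivative. Linearity of $\mathcal{A}$ then yields $(\mathcal{A}-q)V_{z_{1}}^{z_{2}}(x)=(\mathcal{A}-q)h(x)$, and since $h$ is non-decreasing with $h(x)\geq0$ (Lemma \ref{4.6}) every surviving term is non-positive: $\gamma h^{\prime}(x)=0$, $\tfrac{1}{2}\sigma^{2}h^{\prime\prime}(x-)\leq0$, $\int_{(0,\infty)}\bigl(h(x-y)-h(x)\bigr)\upsilon(\mathrm{d}y)\leq0$ by monotonicity (the $h^{\prime}(x)y\mathbf{1}_{(0,1)}(y)$ term vanishing), and $-qh(x)\leq0$. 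Hence $(\mathcal{A}-q)V_{z_{1}}^{z_{2}}(x)\leq0$ on $(z_{2},\infty)$.

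Having verified \eqref{unif.boun.s.d.f.0}, \eqref{unif.boun.s.d.f.1}, \eqref{Optimality conditionplus.0.0} and \eqref{Optimality condition.0} for $f=V_{z_{1}}^{z_{2}}$, Lemma \ref{Lem.1} gives $V_{z_{1}}^{z_{2}}(x)=\sup_{(D,R)\in\mathcal{D}}V_{(D,R)}(x)$ for all $x\geq0$, i.e. the $(z_{1},z_{2})$ strategy is optimal. The delicate points to watch are the consistent use of the left second derivative at the barrier of $V_{x}$---where, away from the optimal level $z_{2}$, smooth fit fails and $V_{x}^{\prime\prime}(x-)>0$---and the fact that $(\mathcal{A}-q)V_{x}(x)=0$ persists through the limit at this kink; this is exactly where Lemmas \ref{4.5} and \ref{4.6} are decisive.
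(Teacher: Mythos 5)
Your proposal is correct and follows essentially the same route as the paper: regularity via Remark \ref{rem1rem1}, condition \eqref{Optimality conditionplus.0.0} via Proposition \ref{Pro.4}, the split of the HJB inequality at $z_{2}$, and on $(z_{2},\infty)$ the identical comparison $h=V_{z_{1}}^{z_{2}}-V_{x}$ with the barrier value function, using $\lim_{y\uparrow x}V_{x}^{\prime\prime}(y)\geq0$ from Lemma \ref{4.5} and the monotonicity/positivity of $h$ from Lemma \ref{4.6}, before invoking the verification Lemma \ref{Lem.1}. The only (harmless) deviation is on $(0,z_{2})$, where the paper establishes $(\mathcal{A}-q)V_{z_{1}}^{z_{2}}=0$ by a strong Markov/It\^{o} martingale argument with $r\downarrow0$, whereas you identify $V_{z_{1}}^{z_{2}}$ with the barrier value function $V_{z_{2}}$ below $z_{2}$ and appeal to the standard identities $(\mathcal{A}-q)Z^{(q)}=0$ and $(\mathcal{A}-q)\overline{Z}^{(q)}=\psi^{\prime}(0+)$ on $(0,\infty)$ — a valid shortcut for the same sub-step.
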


\begin{proof}\quad
By the fact that the scale function $W^{(q)}(x)$ is left and right differentiable over $(0,\infty)$ (see for example, Lemma 1 in \cite{Pistorius04}), Remark \ref{rem1rem1}, and the extended definition $V_{z_{1}}^{z_{2}}(x) = V_{z_{1}}^{z_{2}}(0)+\phi x$ for $x<0$, one can verify that $V_{z_{1}}^{z_{2}}$ satisfies \eqref{unif.boun.s.d.f.1}.

Let $\tau^{+}_{a}$ ($\tau^{-}_{b}$) be the first up-crossing (down-crossing) time of level $a$ ($b$) by the process $U $
\begin{equation}\label{taub-}
\tau_{a}^{+}:=\inf\{t>0;U(t)> a\},\quad\tau_{b}^{-}:=\inf\{t>0;U(t)\leq b\}.
\end{equation}

By Proposition \ref{Pro.4}, we need only to prove
$\mathcal{A}V_{z_{1}}^{z_{2}}(x)-q V_{z_{1}}^{z_{2}}(x)\leq0$ for $x\in[0,\infty)\setminus(d_{i})_{i\leq m}$. Here, $(d_{i})_{i\leq m}$ with $d_{0}:=0< d_{1}<\cdots<d_{m}<\infty:=d_{m+1}$ is the set of points where the continuous differentiability is absent for $W^{(q)}$.

Given $x\in(0,z_{2})\setminus(d_{i})_{i\leq m}$, without loss of generality we may assume $x\in(d_{i},d_{i+1})\cap (0,z_{2})$ for some $0\leq i\leq m$.
Let $\tau:=\tau_{d_{i}}^{-}\wedge \tau_{d_{i+1}\wedge z_{2}}^{+}$ with $\tau_{d_{i}}^{-}$ and $\tau_{d_{i+1}\wedge z_{2}}^{+}$ defined via (\ref{taub-}). By the strong Markov property of the process $X$, we have
\begin{eqnarray}
&&
\mathrm{E}_{x}\left(\int_{0}^{\infty}\mathrm{e}^{-qt}\mathrm{d}(D_{z_{1}}^{z_{2}}(t)-\phi R_{z_{1}}^{z_{2}}(t))\bigg|\mathcal{F}_{r\wedge \tau}
\right)\nonumber\\
&=&\mathrm{E}_{x}\left(\int_{0}^{\infty}\mathrm{e}^{-q(s+r\wedge \tau)}
\mathrm{d}(D_{z_{1}}^{z_{2}}(s+r\wedge \tau)-\phi R_{z_{1}}^{z_{2}}(s+r\wedge \tau))\bigg|\mathcal{F}_{r\wedge \tau}\right)
\nonumber\\
&=&\mathrm{e}^{-q(r\wedge \tau)}\mathrm{E}_{X(r\wedge \tau)}\left(\int_{0}^{\infty}\mathrm{e}^{-qs}
\mathrm{d}(D_{z_{1}}^{z_{2}}(s)-\phi R_{z_{1}}^{z_{2}}(s))\right)\nonumber\\
&=&\mathrm{e}^{-q(r\wedge \tau)}V_{z_{1}}^{z_{2}}(U(r\wedge \tau)), r\geq0,\nonumber
\end{eqnarray}
which implies that the right-hand side of the above equation is a martingale.
Here, we have used the fact that no dividends are paid out and no capital is injected during the time interval $[0,\tau]$; i.e., $U(r)= X(r)$ for $r\in[0,\tau]$.

The martingale property of the process $\left(\mathrm{e}^{-q\left(r\wedge \tau\right)}V_{z_{1}}^{z_{2}}\left(U\left(r\wedge \tau\right)\right)\right)_{r\geq0}$ implies
\begin{eqnarray}\label{ver.xles.a}
\mathcal{A}V_{z_{1}}^{z_{2}}(x)-q V_{z_{1}}^{z_{2}}(x)=0, \quad x\in(0,z_{2})\setminus (d_{i})_{i\leq m}.
\end{eqnarray}
Indeed, for $x\in(d_{i},d_{i+1})\cap (0,z_{2})$ and $\tau=\tau_{d_{i}}^{-}\wedge \tau_{d_{i+1}\wedge z_{2}}^{+}$, It\^{o}'s formula gives
\begin{eqnarray}
&&\mathrm{e}^{-q(r\wedge \tau)}V_{z_{1}}^{z_{2}}(U(r\wedge \tau))-V_{z_{1}}^{z_{2}}(x)\nonumber\\
&=&\int_{0-}^{r\wedge \tau}\mathrm{e}^{-q s}(\mathcal{A}-q)V_{z_{1}}^{z_{2}}(U(s))\mathrm{d}s+\int_{0-}^{r\wedge \tau}\sigma \mathrm{e}^{-q s}[V_{z_{1}}^{z_{2}}]^{\prime}(U(s))\mathrm{d}B(s)\nonumber\\
&&+\int_{0-}^{r\wedge \tau}\int_{0}^{\infty}\mathrm{e}^{-q s}[V_{z_{1}}^{z_{2}}(U(s-)-y)-V_{z_{1}}^{z_{2}}(U(s-))]
\overline{N}(\mathrm{d}s,\mathrm{d}y),\quad r\geq0.\nonumber
\end{eqnarray}
Taking expectations on both sides of the above display after localization, we have
$$
0=\mathrm{E}_{x}\left(\int_{0-}^{r\wedge \tau}\mathrm{e}^{-q s}(\mathcal{A}-q)V_{z_{1}}^{z_{2}}(U(s))\mathrm{d}s\right),\quad r\geq0.
$$
Being divided  by $r$ on both sides and then setting $r\downarrow0$ in the above equation, we get (\ref{ver.xles.a}) for $x\in(0,z_{2})\setminus (d_{i})_{i\leq m}$ by the mean value theorem together with the dominated convergence theorem. For a more detailed proof of (\ref{ver.xles.a}), we can also turn to Proposition 2.1 of \cite{Hojgaard99}. 
Thus, it suffices to further prove
\begin{eqnarray}\label{ver.xbig.a*}
\mathcal{A}V_{z_{1}}^{z_{2}}(x)-q V_{z_{1}}^{z_{2}}(x)\leq0, \quad x\in[z_{2},\infty)\setminus (d_{i})_{i\leq m}.
\end{eqnarray}

By using similar arguments as those used in proving (\ref{ver.xles.a}) we can get
$$\mathcal{A}V_{x}(y)-q V_{x}(y)=0, \quad y\in(0,x)\setminus (d_{i})_{i\leq m}, x\in(0,\infty),$$
which implies
\begin{eqnarray}\label{3.25}
\lim\limits_{y\uparrow x}\left(\mathcal{A}V_{x}(y)-q V_{x}(y)\right)=0,\quad x\in(z_{2},\infty)\setminus (d_{i})_{i\leq m},
\end{eqnarray}
where $\lim\limits_{y\uparrow x}\mathcal{A}V_{x}(y)$ is well-defined due to \eqref{barrierstra.Vxy}.
Meanwhile, because the function $\mathcal{A}V_{z_{1}}^{z_{2}}-q V_{z_{1}}^{z_{2}}$ is continuous over $(z_{2},\infty)$ (actually, $[V_{z_{1}}^{z_{2}}]^{\prime\prime}(x)=0$ for $x\in(z_{2},\infty)$, see Proposition \ref{Pro.3}), we have
\begin{eqnarray}\label{3.26}
\hspace{-0.5cm}
\lim\limits_{y\uparrow x}\left(\mathcal{A}V_{z_{1}}^{z_{2}}(y)-q V_{z_{1}}^{z_{2}}(y)\right)=\mathcal{A}V_{z_{1}}^{z_{2}}(x)-q V_{z_{1}}^{z_{2}}(x), x\in(z_{2},\infty)\setminus (d_{i})_{i\leq m}.
\end{eqnarray}
Combining (\ref{3.25}) and (\ref{3.26}), to prove (\ref{ver.xbig.a*}) it suffices to show
\begin{eqnarray}
\lim\limits_{y\uparrow x}\left(\mathcal{A}[V_{z_{1}}^{z_{2}}(y)-V_{x}(y)]-q[V_{z_{1}}^{z_{2}}(y)-V_{x}(y)]\right)\leq0,\quad x\in(z_{2},\infty)\setminus (d_{i})_{i\leq m}.\nonumber
\end{eqnarray}
For $x\in(z_{2},\infty)\setminus (d_{i})_{i\leq m}$, we can use the dominated
convergence theorem to deduce
\begin{eqnarray}\label{sign.lim.oper.}
&&\lim_{y\uparrow x}\left(\mathcal{A}[V_{z_{1}}^{z_{2}}(y)-V_{x}(y)]-q [V_{z_{1}}^{z_{2}}(y)-V_{x}(y)]\right)\nonumber\\
&=&\gamma\left([[V_{z_{1}}^{z_{2}}]^{\prime}(x)-V_{x}^{\prime}(x)]\right)+\frac{\sigma^{2}}{2}[[V_{z_{1}}^{z_{2}}]^{\prime\prime}(x)-\lim_{y\uparrow x}V_{x}^{\prime\prime}(y)]-q[V_{z_{1}}^{z_{2}}(x)-V_{x}(x)]\nonumber\\
&&+\int_{(0,\infty)}\left([V_{z_{1}}^{z_{2}}(x\!-\!y)\!-\!V_{x}(x\!-\!y)]\!-\![V_{z_{1}}^{z_{2}}(x)\!-\!V_{x}(x)]
\right.
\nonumber\\
&&
\left.
+[[V_{z_{1}}^{z_{2}}]^{\prime}(x)\!-\!V_{x}^{\prime}(x)] y\mathbf{1}_{(0,1)}(y)\right)\upsilon(\mathrm{d}y)\nonumber\\
&=&-\frac{\sigma^{2}}{2}\lim\limits_{y\uparrow x}V_{x}^{\prime\prime}(y)-q[V_{z_{1}}^{z_{2}}(x)-V_{x}(x)]\nonumber\\
& &+\int_{(0,\infty)}\left([V_{z_{1}}^{z_{2}}(x-y)-V_{x}(x-y)]-[V_{z_{1}}^{z_{2}}(x)-V_{x}(x)]\right)\upsilon(\mathrm{d}y),
\end{eqnarray}
where the last equality stems from $[V_{z_{1}}^{z_{2}}]^{\prime}(x)=V_{x}^{\prime}(x)=1$ and $[V_{z_{1}}^{z_{2}}]^{\prime\prime}(x)=0$ for $x>z_{2}$.

Similarly, by (\ref{decreasingsuff.cond.}) and (\ref{barrierstra.Vxy})  we have
\begin{equation}\label{64}
\lim_{y\uparrow x}V_{x}^{\prime\prime}(y)\geq0,\quad x\in(z_{2},\infty)\setminus (d_{i})_{i\leq m}.\nonumber
\end{equation}
By Lemma \ref{4.6}, it holds that $V_{z_{1}}^{z_{2}}(x)-V_{x}(x)\geq0$ and
\begin{eqnarray}\label{66}
\left[V_{z_{1}}^{z_{2}}(x-y)-V_{x}(x-y)\right]&-&\left[V_{z_{1}}^{z_{2}}(x)-V_{x}(x)\right]\nonumber\\
&=&h(x-y)-h(x)\leq0, \quad y\in[0,\infty).\nonumber
\end{eqnarray}
 Therefore, the right-hand side of (\ref{sign.lim.oper.}) is non-positive, and this proves \eqref{ver.xbig.a*}.


Now, as per Lemma \ref{Lem.1},  the $(z_{1},z_{2})$ strategy is optimal among all admissible IDCI strategies. The proof is completed.
\qed
\end{proof}

%



\section{A numerical example}\label{sec:5}
To illustrate the findings in previous sections, we provide some numerical results in this section.

Assume that the driven process follows
$$
X(t)=\mu t+\sigma B(t),\quad t\geq 0,
$$
a Brownian motion with drift,  where $\mu\in \mathbb{R}$, $\sigma>0$, and $\{B(t)\}$ is the standard Brownian motion.
As per \cite{Kyprianou12}, the $q$-scale function for the above Brownian motion is
\begin{eqnarray*}
W^{(q)}(x)&=&\frac{\exp\Big\{\frac{-\mu+\sqrt{ \mu^2+2q\sigma^2 }}{\sigma^2} x\Big\}
-\exp\Big\{\frac{-\mu-\sqrt{ \mu^2+2q\sigma^2 }}{\sigma^2} x\Big\}}{\sqrt{ \mu^2+2q\sigma^2}}\\
&:=&\frac{1}{\sigma^2 \delta}\left(\mathrm{e}^{(-w+\delta) x}-\mathrm{e}^{-(w+\delta) x}\right),\quad x\geq 0,
\end{eqnarray*}
where
$\delta=\frac{\sqrt{ \mu^2+2q\sigma^2 } }{ \sigma^2}$
and
$w=\frac{\mu }{ \sigma^2}$. Let $\alpha=w+\delta$ and $\beta=w-\delta$. By definition we have
\begin{eqnarray*}
Z^{(q)}(x)&=&1+q\int_{0}^{x}W^{(q)}(z)\mathrm{d}z
=\frac{1}{2 \delta}\left(\alpha \mathrm{e}^{-\beta x}-\beta \mathrm{e}^{-\alpha x}\right),\quad x\geq 0,\\
\overline{Z}^{(q)}(x)&=&\int_{0}^{x}Z^{(q)}(z)\mathrm{d}z
=-\frac{\mu}{q}+\frac{\sigma^2 }{4q \delta}\left(\alpha^{2}\mathrm{e}^{-\beta x}-\beta^{2}\mathrm{e}^{-\alpha x}\right),\quad x\geq 0.
\end{eqnarray*}
Hence, for $0<c\leq z_{1}+c< z_{2}<\infty$, it holds that
\begin{eqnarray}\label{explicit.auxiliary.func.}
\hspace{-0.5cm}\xi(z_{1},z_{2})
&=&\frac{2\delta(z_{2}-z_{1}-c)}{\zeta(z_1, z_2)}-\frac{\phi\mu}{q}-\frac{\phi(\mathrm{e}^{-\beta z_{2}}-\mathrm{e}^{-\beta z_{1}}-\mathrm{e}^{-\alpha z_{2}}+\mathrm{e}^{-\alpha z_{1}})}{\zeta(z_1, z_2)},
\end{eqnarray}
where $\zeta(z_1, z_2)=\alpha(\mathrm{e}^{-\beta z_2}-\mathrm{e}^{-\beta z_1})-\beta(\mathrm{e}^{-\alpha z_2}-\mathrm{e}^{-\alpha z_1})$.
Differentiating both sides of (\ref{explicit.auxiliary.func.}) with respect to $z_1$ we get
\begin{eqnarray}\label{explicit.repr.of.deri.xi.z1}
\hspace{-0.6cm}\frac{\partial}{\partial z_1}\xi(z_{1},z_{2})
&=&-\frac{2\delta+\phi(\beta \mathrm{e}^{-\beta z_1}-\alpha \mathrm{e}^{-\alpha z_1})}{\zeta(z_{1},z_{2})}-\frac{\xi(z_1,z_2)+\frac{\phi \mu}{q}}{\zeta(z_{1},z_{2})}\frac{\partial}{\partial z_1}\zeta(z_{1},z_{2}).
\end{eqnarray}
By solving $\frac{\partial}{\partial z_{1}}\xi(z_{1},z_{2})=0$ we get
\begin{equation}\label{xi2.0}
\xi(z_{1},z_{2})=-\frac{2\delta+\phi(\beta \mathrm{e}^{-\beta z_1}-\alpha \mathrm{e}^{-\alpha z_1})}{\alpha\beta(\mathrm{e}^{-\beta z_1}-\mathrm{e}^{-\alpha z_1})}-\frac{\phi\mu}{q}.
\end{equation}
Differentiating both sides of \eqref{explicit.auxiliary.func.} with respect to $z_{2}$ we get
\begin{eqnarray}
\label{explicit.repr.of.deri.xi.z2}
\hspace{-0.5cm}\frac{\partial}{\partial z_2}\xi(z_{1},z_{2})
&=&\frac{2\delta+\phi(\beta \mathrm{e}^{-\beta z_2}-\alpha \mathrm{e}^{-\alpha z_2})}{\zeta(z_{1},z_{2})}-\frac{\xi(z_1,z_2)+\frac{\phi \mu}{q}}{\zeta(z_{1},z_{2})}\frac{\partial}{\partial z_2}\zeta(z_{1},z_{2}).
\end{eqnarray}
Setting $\frac{\partial}{\partial z_{2}}\xi(z_{1},z_{2})=0$ in \eqref{explicit.repr.of.deri.xi.z2} we solve
\begin{eqnarray}\label{xi3.0}
\xi(z_{1},z_{2})=\frac{2\delta+\phi(\beta \mathrm{e}^{-\beta z_2}-\alpha \mathrm{e}^{-\alpha z_2})}{\alpha\beta(\mathrm{e}^{-\alpha z_2}-\mathrm{e}^{-\beta z_2})}-\frac{\phi\mu}{q}.
\end{eqnarray}
By \eqref{explicit.repr.of.deri.xi.z1} one can verify that
$\frac{\partial}{\partial z_{1}}\xi(0,z_{2})=\frac{2\delta(\phi-1)}{\alpha(\mathrm{e}^{-\beta z_{2}}-1)-\beta(\mathrm{e}^{-\alpha z_{2}}-1)}>0$,
excluding the possibility for the maximizer of $\xi$ to lie on the line $z_{1}=0$.
Since it is proved (cf., Proposition \ref{Pro.2}) that the maximizer of $\xi$ cannot be attained on the line $z_{2}=z_{1}+c$, we claim that the $\xi$  is maximized at an interior point of the set $\{(z_{1},z_{2});z_{1},z_{2}\in[0, z_{0}],z_{1}+c\leq z_{2}\}$ for some bounded $z_{0}>0$ (see the arguments immediately following \eqref{Def. set of maximizers}).
Thus, if $(z_{1},z_{2})$ is the maximizer of $\xi$, then \eqref{explicit.auxiliary.func.}, \eqref{xi2.0} and \eqref{xi3.0} should hold simultaneously.
Combining \eqref{xi2.0} and \eqref{xi3.0} yields
\begin{eqnarray}\label{xi2.0+xi3.0}
\mathrm{e}^{-\alpha z_2}-\mathrm{e}^{-\alpha z_1}-\mathrm{e}^{-\beta z_2}+\mathrm{e}^{-\beta z_1}+\phi(\mathrm{e}^{-\beta z_2-\alpha z_1}-\mathrm{e}^{-\alpha z_2-\beta z_1})=0.
\end{eqnarray}
Similarly, combining \eqref{explicit.auxiliary.func.} and \eqref{xi2.0} yields
\begin{eqnarray}\label{xi2.0+xi}
\alpha\beta(z_2-z_1-c)(\mathrm{e}^{-\beta z_1}-\mathrm{e}^{-\alpha z_1})&+&\zeta(z_1, z_2)+2\delta\phi \mathrm{e}^{-\frac{2\mu}{\sigma^2}z_1}\nonumber\\
&-&\alpha\phi \mathrm{e}^{-\alpha z_1-\beta z_2}+\beta\phi \mathrm{e}^{-\alpha z_2-\beta z_1}=0.
\end{eqnarray}

Now, we  are ready to present the numerical results. First, we set $\mu=1, \sigma=0.36, q=0.05$, $c=0.1$ and $\phi=0.05$. Numerically, \eqref{xi2.0+xi3.0} and \eqref{xi2.0+xi} are uniquely solved by $(z_1,z_2)=(0.02682,2.12950)$, a maximizer of $\xi$. According to the previous argument, it must be the maximizer of $\xi$. In fact, by routine calculus we can verify that, at  $(z_1,z_2)=(0.02682,2.12950)$,
\begin{eqnarray*}
\frac{\partial^2\xi(z_1, z_2)}{\partial z_1^2}&=&\frac{\phi(\beta^2 \mathrm{e}^{-\beta z_1}-\alpha^2 \mathrm{e}^{-\alpha z_1})+\frac{2\delta+\phi(\beta \mathrm{e}^{-\beta z_1}-\alpha \mathrm{e}^{-\alpha z_1})}{\mathrm{e}^{-\beta z_1}-\mathrm{e}^{-\alpha z_1}}(\alpha \mathrm{e}^{-\alpha z_1}-\beta \mathrm{e}^{-\beta z_1})}{\zeta(z_1, z_2)}<0,\\
\frac{\partial^2\xi(z_1, z_2)}{\partial z_2^2}&=&\frac{\phi(\alpha^2 \mathrm{e}^{-\alpha z_2}-\beta^2 \mathrm{e}^{-\beta z_2})-\frac{2\delta+\phi(\beta \mathrm{e}^{-\beta z_2}-\alpha \mathrm{e}^{-\alpha z_2})}{\mathrm{e}^{-\alpha z_2}-\mathrm{e}^{-\beta z_2}}(\beta \mathrm{e}^{-\beta z_2}-\alpha \mathrm{e}^{-\alpha z_2})}{\zeta(z_1, z_2)}<0,\\
\frac{\partial^2\xi(z_1, z_2)}{\partial z_1\partial z_2}&=&\frac{\partial^2\xi(z_1, z_2)}{\partial z_2\partial z_1}=0,
\end{eqnarray*}
and hence
$\frac{\partial^2\xi(z_1, z_2)}{\partial z_1^2}\frac{\partial^2\xi(z_1, z_2)}{\partial z_2^2}-\frac{\partial^2\xi(z_1, z_2)}{\partial z_1\partial z_2}\frac{\partial ^2\xi(z_1, z_2)}{\partial z_2\partial z_1}>0$,
 verifying that $(z_1,z_2)=(0.02682,2.12950)$ is the maximizer of $\xi$. This is also confirmed in Figure \ref{Fig1}. Also, as seen in Figure \ref{Fig2},
 $$G(x):=\phi q[W^{(q)}(x)]^2+[1-\phi Z^{(q)}(x)][W^{(q)}(x)]^{\prime}\ge 0,\quad\mbox{for $x\ge z_2=2.12950.$}$$
 This verifies \eqref{decreasingsuff.cond.}.

\begin{figure}[ht]\centering
\subfigure[{Global maximizer of $\xi(z_1, z_2)$}]
{\includegraphics[height=0.15\textheight,width=0.5\textwidth]{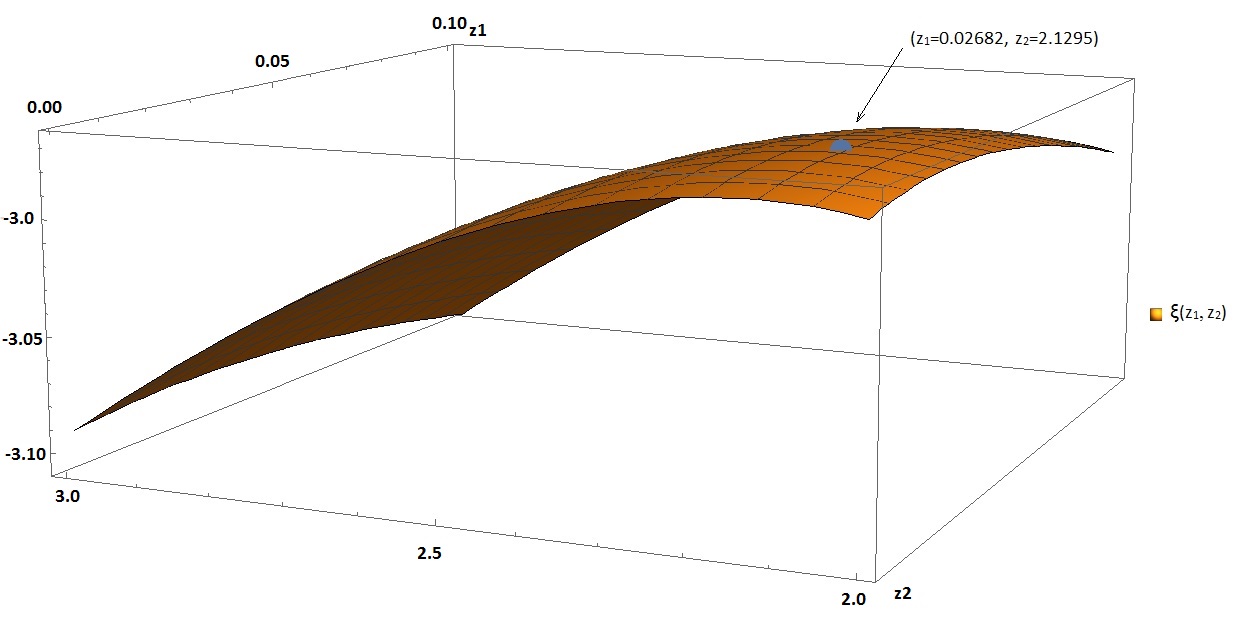}\label{Fig1}}\quad
\subfigure[{Curve of $G(x)$}]
{\includegraphics[height=0.15\textheight,width=0.4\textwidth]{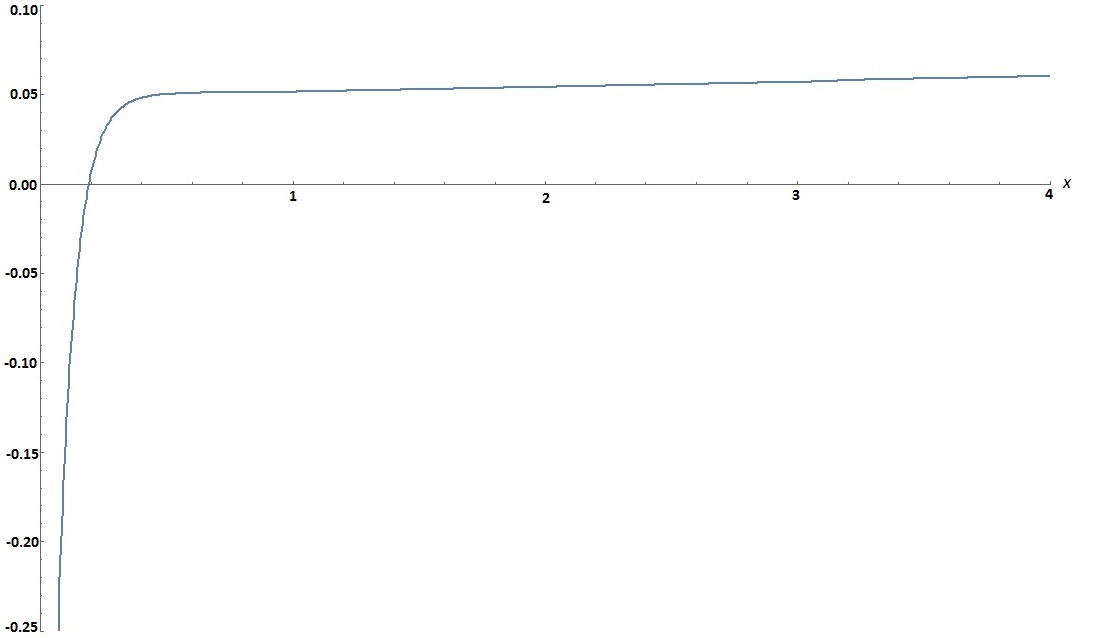}\label{Fig2}}
\caption{Surface of $\xi(z_1, z_2)$ and curve of $G(x)$}
\end{figure}

With the optimal $(z_1,z_2)=(0.02682,2.12950)$ strategy, we can plot its associated value function $V^{z_2}_{z_1}(x)$. According to Proposition 3.2, we have
\begin{eqnarray*}
V^{z_2}_{z_1}(x)=\left\{\begin{array}{l l}
\frac{2\delta(\alpha \mathrm{e}^{-\beta x}-\beta \mathrm{e}^{-\alpha x})}{\xi(0.02682, 2.1295)}+\frac{\phi\sigma^2 }{4q \delta}(\alpha^{2}\mathrm{e}^{-\beta x}-\beta^{2}\mathrm{e}^{-\alpha x}), &0\le x\le 2.1295,\\
x-2.1295+V^{z_2}_{z_1}(2.1295), &x>2.1295.\end{array}\right.
\end{eqnarray*}
It is observed in Figure \ref{Fig3} that the segment in blue (i.e. $x\le 2.1295$) is shaped similar to a straight line, even though its underlying function is actually a combination of exponential functions.

\begin{figure}[ht]\centering
\subfigure[{Curve of $V^{z_2}_{z_1}(x)$}]
{\includegraphics[height=0.15\textheight,width=0.45\textwidth]{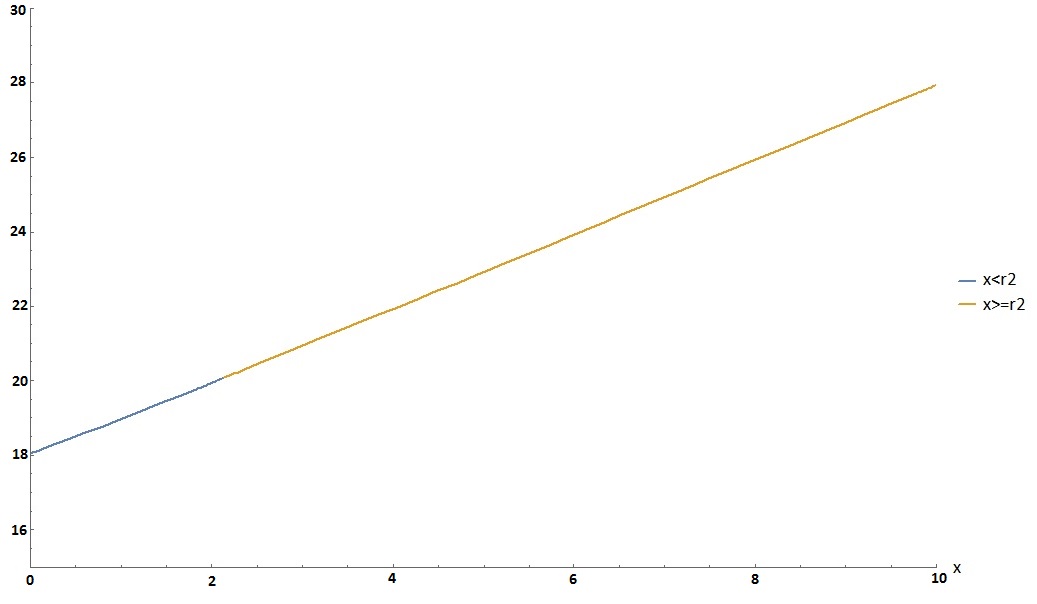}\label{Fig3}}\quad
\subfigure[{Optimal lump sum dividend amount w.r.t. $c$}]
{\includegraphics[height=0.15\textheight,width=0.45\textwidth]{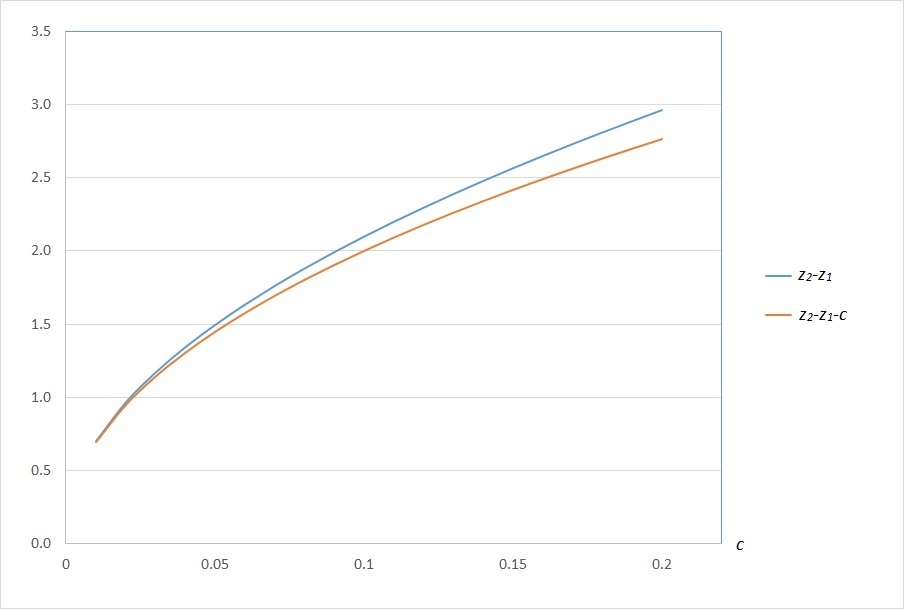}\label{Fig4}}
\caption{Function $V^{z_2}_{z_1}(x)$ and optimal dividends}
\end{figure}

Next, let us examine the parameter sensitivity concerned with $c$ and $\phi$, both playing a critical role in our model. To avoid repetitiveness, we omit the checking arguments of the maximizers of $\xi$. Also,  for ease of comparison, we set $\mu=1, \sigma=0.36$, and $q=0.05$ thereafter. For $\phi=1.05$, in Table \ref{tab1} of the maximizer of $\xi$ for $c= 0.01, 0.02, \ldots, 0.20$, $z_1$ is seen to have a slow but steady downward trend when $c$ increases while $z_2$ has a solid upward trend. Further, in Figure \ref{Fig4}, the individual dividend amount $z_2-z_1$ and the net individual dividend amount$z_2-z_1-c$ both display a solid increasing trend when the transaction cost $c$ increases. This is reasonable because the better way of paying dividends is to pay out more each time with a higher dividend threshold when transaction cost increases.

\begin{table}[ht]
\begin{center}
\begin{tabular}{c| c c| c| c c} \hline
$c$	&$z_1$			&$z_2$				&$c$	&$z_1$		&$z_2$ \\ \hline
0.01	&0.06002	&0.76463		&0.11	&0.02583	&2.22967\\
0.02	&0.04818	&1.01761		&0.12	&0.02496	&2.32560\\
0.03	&0.04195	&1.21568		&0.13	&0.02418	&2.41782\\
0.04	&0.03787	&1.38447		&0.14	&0.02348	&2.50673\\
0.05	&0.03491	&1.53426		&0.15	&0.02284	&2.59269\\
0.06	&0.03262	&1.67044		&0.16	&0.02226	&2.67598\\
0.07	&0.03077	&1.79622		&0.17	&0.02172	&2.75685\\
0.08	&0.02924	&1.91373		&0.18	&0.02123	&2.83550\\
0.09	&0.02794	&2.02447		&0.19	&0.02077	&2.91212\\
0.10	&0.02682	&2.12950		&0.20	&0.02034	&2.98686\\ \hline
\end{tabular}
\end{center}
\caption{Maximizer of $\xi$ with respect to $c$ when $\phi=1.05$}\label{tab1}
\end{table}

\begin{table}[ht]
\begin{center}
\begin{tabular}{c| c c| c| c c} \hline
$\phi$	&$z_1$	&$z_2$		&$\phi$	&$z_1$	&$z_2$\\ \hline
1.01	&0.00635	&2.10904		&1.11	&0.04877	&2.15145\\
1.02	&0.01212	&2.11481		&1.12	&0.05179	&2.15447\\
1.03	&0.01741	&2.12010		&1.13	&0.05467	&2.15736\\
1.04	&0.02229	&2.12497		&1.14	&0.05743	&2.16011\\
1.05	&0.02682	&2.12950		&1.15	&0.06008	&2.16276\\
1.06	&0.03104	&2.13373		&1.16	&0.06262	&2.16530\\
1.07	&0.03501	&2.13769		&1.17	&0.06506	&2.16774\\
1.08	&0.03873	&2.14142		&1.18	&0.06741	&2.17010\\
1.09	&0.04226	&2.14494		&1.19	&0.06968	&2.17236\\
1.10	&0.04559	&2.14828		&1.20	&0.07187	&2.17456\\ \hline
\end{tabular}
\end{center}
\caption{Maximizer of $\xi$ with respect to $\phi$ when $c=0.1$}\label{tab2}
\end{table}

\begin{figure}[ht] \centering
   \includegraphics[height=0.175\textheight,width=0.65\textwidth]{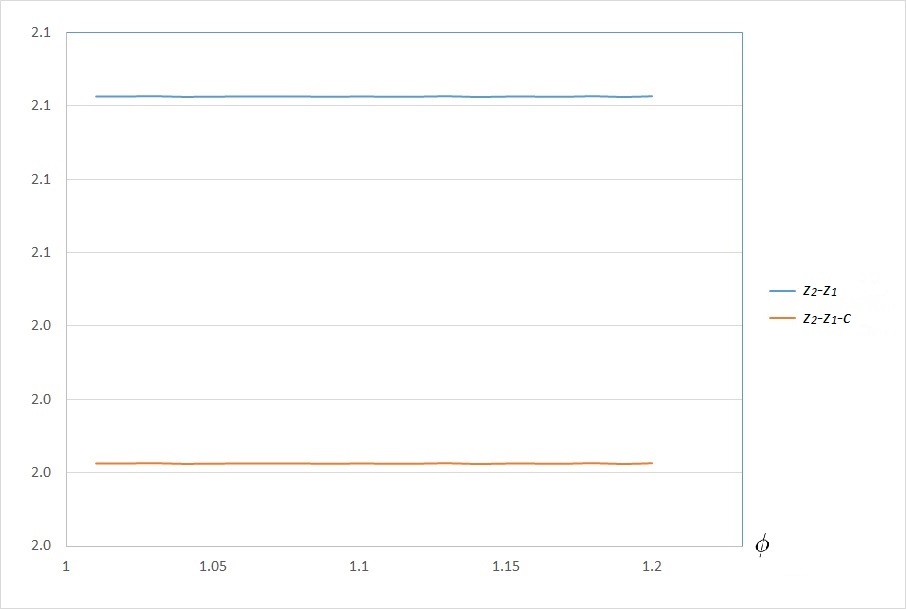}
   \caption{Optimal lump sum dividend amount w.r.t. $\phi$}\label{Fig5}
\end{figure}

For $c=0.1$, Table \ref{tab2} lists the maximizer $\xi$ for $\phi= 1.01, 1.02, \ldots, 1.20$. Both $z_1$ and $z_2$ are seen to have steady upward trends when $\phi$ increases. However, $z_2-z_1$ and $z_2-z_1-c$ in this case almost keep constant no matter how $\phi$ changes. As seen in Figure \ref{Fig5}, when the cost of capital injection goes up, it is more beneficial to have a higher dividend threshold, which partially reduces the chance of needing capital injection. Also, the increasing trend of $z_1$ upon $\phi$ lowers the negative impact of dividends on the solvency of the insurer, while at the same time helping the company to reduce the need of additional capital. On the other hand, the amount of money paid out in each dividend does not depend on $\phi$, but on the value of $c$ which has been observed in the previous case.



\begin{acknowledgements}
The authors are grateful to the anonymous referees for their very careful reading of the paper, and for their very constructive and helpful suggestions and comments.
The authors are also grateful to Professor Xiaohu Li in the Department of Mathematical Sciences at the Stevens Institute of Technology (USA) for helping to polish the English writing of this paper.

\end{acknowledgements}




\begin{thebibliography}{}





\bibitem{Albrecher09} Albrecher, H., Thonhauser, S.: Optimality results for dividend problems in insurance. Rev. R. Acad. Cien. Serie A. Mat. \textbf{103}(2), 295-320 (2009).

\bibitem{Avanzi09} Avanzi, B.: Strategies for dividend distribution: A review. N. Am. Actuar. J. \textbf{13}(2), 217-251 (2009).

\bibitem{Finetti57} De Finetti, B.: Su un'impostazion alternativa dell teoria collecttiva del rischio. In Trans. XVth Internat. Congress Actuaries \textbf{2}, 433-443 (1957).

\bibitem{Bai10a} Bai, L., Guo, J.: Optimal dividend payments in the classical risk model when payments are subject to both transaction costs and taxes. Scand. Actuar. J. \textbf{2010}(1), 36-55 (2010).

\bibitem{Zhou14} Zhou, M., Yiu, K.: Optimal dividend strategy with transaction costs for an upward jump model. Quant. Financ. \textbf{14}(6), 1097-1106 (2014).

\bibitem{Bai10b} Bai, L., Paulsen, J.: Optimal dividend policies with transaction costs for a class of diffusion processes, SIAM J. Control Optim. \textbf{48}, 4987-5008 (2010).

\bibitem{Loeffen09b} Loeffen, R.: An optimal dividends problem with transaction costs for spectrally negative L\'{e}vy processes. Insur. Math. Econ. \textbf{45}, 41-48 (2009).

\bibitem{Bayraktar14} Bayraktar, E., Kyprianou, A., Yamazaki, K.: Optimal dividends in the dual model under transaction costs. Insur. Math. Econ. \textbf{54}, 133-143 (2014).

\bibitem{Hernandez18} Hern\'{a}ndez, C., Junca, M.,  Moreno-Franco, H.: A time of ruin constrained optimal dividend problem for spectrally one-sided L\'evy processes. Insur. Math. Econ. \textbf{79}, 57-68 (2018).

\bibitem{Avram15} Avram, F., Palmowski, Z., Pistorius, M.: On Gerber-Shiu functions and optimal dividend distribution for a L\'{e}vy risk process in the presence of a penalty function.
Ann. Appl. Probab. \textbf{25}(4), 1868-1935 (2015).

\bibitem{Zhao15} Zhao, Y., Wang, R., Yao, D., Chen, P.: Optimal dividends and capital injections in the dual model with a random time horizon. J. Optimiz. Theory App. \textbf{167}, 272-295 (2015).

\bibitem{Guan14} Guan, H., Liang, Z.: Viscosity solution and impulse control of the diffusion model with reinsurance and fixed transaction costs. Insur. Math. Econ. \textbf{54}, 109-122 (2014).

\bibitem{Hunting13} Hunting, M., Paulsen, J.: Optimal dividend policies with transaction costs for a class of jump-diffusion processes. Financ. Stoch. \textbf{17}(1), 73-106 (2013).

\bibitem{Avanzi11} Avanzi, B., Shen, J., Wong, B.: Optimal dividends and capital injections in the dual model with diffusion. ASTIN Bull. \textbf{41}(2), 611-644 (2011).

\bibitem{Yao11} Yao, D., Yang, H., Wang, R.: Optimal dividend and capital injection problem in the dual model with proportional and fixed transaction costs. Eur. J. Oper. Res. \textbf{211}(3), 568-576 (2011).

\bibitem{Peng12} Peng, X., Chen, M., Guo, J.: Optimal dividend and equity issuance problem with proportional and fixed transaction costs. Insur. Math. Econ. \textbf{51}(3), 576-585 (2012).

\bibitem{Zhao17a} Zhao, Y., Wang, R., Yin, C.: Optimal dividends and capital injections for a spectrally positive L\'{e}vy process. J. Ind. Manag. Optim. \textbf{12}(4), 1-21 (2017).

\bibitem{Zhao17b} Zhao, Y., Chen, P., Yang, H.: Optimal periodic dividend and capital injection problem for spectrally positive L\'{e}vy processes. Insur. Math. Econ. \textbf{74}, 135-146 (2017).

\bibitem{Zhu17} Zhu, J.: Optimal financing and dividend distribution with transaction costs in the case of restricted dividend rates. ASTIN Bull. \textbf{47}(1), 239-268 (2017).

\bibitem{Avram18} Avram, F., P\'{e}rez, J., Yamazaki, K.: Spectrally negative L\'{e}vy processes with Parisian reflection below and classical reflection above. Stoch. Proc. Appl. \textbf{128}(1), 255-290 (2018).

\bibitem{Avanzi17} Avanzi, B., P\'{e}rez, J., Wong, B., Yamazaki, K.: On optimal joint reflective and refractive dividend strategies in spectrally positive L\'{e}vy models. Insur. Math. Econ. \textbf{72}, 148-162 (2017).

\bibitem{Yin15} Yin, C., Yuen, K.: Optimal dividend problems for a jump-diffusion model with capital injections and proportional transaction costs. J. Ind. Manag. Optim. \textbf{11}(4), 1247-1262 (2015).

\bibitem{Bayraktar13} Bayraktar, E., Kyprianou, A.E., Yamazaki, K.: On optimal dividends in the dual model. ASTIN Bull. \textbf{43}(3), 359-372 (2013).

\bibitem{Avram07} Avram, F., Palmowski, Z., Pistorius, M.: On the optimal dividend problem for a spectrally negative L\'{e}vy process. Ann. Appl. Probab. \textbf{17}, 156-180 (2007).

\bibitem{Loeffen08} Loeffen, R.: On optimality of the barrier strategy in de Finetti's dividend problem for spectrally negative L\'{e}vy processes. Ann. Appl. Probab. \textbf{18}, 1669-1680 (2008).

\bibitem{Loeffen09a} Loeffen, R.: An optimal dividends problem with a terminal value for spectrally negative L\'{e}vy processes with a completely monotone jump density. J. Appl. Probab. \textbf{46}(1), 85-98 (2009).

\bibitem{Loeffen10} Loeffen, R., Renaud J.: De Finetti's optimal dividends problem with an affine penalty function at ruin. Insur. Math. Econ. \textbf{46}(1), 98-108 (2010).

\bibitem{Kyprianou07} Kyprianou, A., Palmowski, Z.: Distributional study of de Finetti's dividend problem for a general L\'{e}vy insurance risk process. J. Appl. Probab. \textbf{44}(2), 428-443 (2007).

\bibitem{Renaud07} Renaud, J., Zhou, X.: Distribution of the present value of dividend payments in a L\'evy risk model. J. Appl. Probab. \textbf{44}, 420-427 (2007).

\bibitem{Kyprianou12} Kyprianou, A., Loeffen, R., P\'{e}rez, J.: Optimal control with absolutely continuous strategies for spectrally negative L\'{e}vy processes. J. Appl. Probab.  \textbf{49}(1), 150-166 (2012).

\bibitem{Boguslavskaya08} Boguslavskaya, E.: Optimization problems in financial mathematics: Explicit solutions for diffusion models. Doctoral Thesis, University of Amsterdam (2008).

\bibitem{Thonhauser07} Thonhauser, S., Albrecher, H.: Dividend maximization under consideration of the time value of ruin. Insur. Math. Econ. \textbf{41}, 163-184 (2007).

\bibitem{Azcue05} Azcue, P., Muler, N.: Optimal reinsurance and dividend distribution policies in the Cram\'{e}r Lundberg model. Math. Financ. \textbf{15}, 261-308 (2005).

\bibitem{Gerber69} Gerber, H.: Entscheidungskriterien fur den zusammengesetzten Poisson-Prozess. Mit. Verein. Schweiz. Versicherungsmath. \textbf{69}, 185-227 (1969).

\bibitem{Yin13} Yin, C., Wen, Y.: Optimal dividend problem with a terminal value for spectrally positive L\'{e}vy processes. Insur. Math. Econ. \textbf{53}(3), 769-773 (2013).

\bibitem{Junca18} Junca, M., Moreno-Franco, H., P\'{e}rez, J.: Optimal bail-out dividend problem with transaction cost and capital injection constraint. Risks \textbf{7}(1), 13 (2019).

\bibitem{Bertoin96} Bertoin, J.: \emph{L\'{e}vy Processes}. Cambridge University Press, Cambridge (1996).

\bibitem{Pistorius04} Pistorius, M.: On exit and ergodicity of the spectrally one-sided L\'{e}vy process reflected at its infimum. J. Theor. Probab. \textbf{17}(1), 183-220 (2004).

\bibitem{Kyprianou06}
Kyprianou, A.:  Introductory Lectures on Fluctuations of L\'{e}vy Processes with Applications. Springer,
Berlin (2006).

\bibitem{Wang18}
Wang, W., Zhou, X.: General draw-down based de Finetti optimization for spectrally negative L\'evy risk processes. J. Appl. Probab. \textbf{55}(2), 513¨C542 (2018).

\bibitem{Asmussen00} Asmussen,S.: Ruin Probabilities. World Scientific Publishing, Singapore (2000).

\bibitem{Hubalek11}
Hubalek, F., Kyprianou, A.: Old and new examples of scale functions for spectrally negative L\'{e}vy processes. In Sixth Seminar on Stochastic Analysis, Random
Fields and Applications (R. Dalang, M. Dozzi and F. Russo, Eds.) 119-145. Springer, Basel  (2011).

\bibitem{Jacod02} Jacod, J., Shiryaev, A.: Limit Theorems for Stochastic Processes (Second edition). Springer-Verlag, Berlin, Heidelberg (2003).

\bibitem{Ikeda81} Ikeda, N., Watanabe, S.: Stochastic Differential Equations and Diffusion Processes. North Holland-Kodansha, New York (1981).

\bibitem{Protter95} Protter, P.:  Stochastic Integration and Differential Equations. Springer, Berlin (1995).

\bibitem{Hojgaard99} H{\o}jgaard, B., Taksar, M.: Controlling risk exposure and dividends payout schemes: Insurance company example. Math. Financ. \textbf{9}(2), 153-182 (1999).


\end{thebibliography}
\end{document}